\documentclass[leqno]{article}
\usepackage[french,english]{babel}
\usepackage[utf8]{inputenc}
\usepackage[T1]{fontenc}
\usepackage{amsmath}
\usepackage{amssymb}
\usepackage{amsfonts}
\usepackage{enumerate}
\usepackage{vmargin}
\usepackage[all]{xy}
\usepackage{mathrsfs}
\usepackage{mathtools}
\usepackage{lmodern}
\usepackage{slashed}
\usepackage{tikz}
\usetikzlibrary{decorations.pathreplacing}
\usepackage[colorlinks=true,linkcolor=blue,pagebackref=true,hyperfootnotes=false]{hyperref}
\setmarginsrb{3cm}{3cm}{3.5cm}{3cm}{0cm}{0cm}{1.5cm}{3cm}
\usepackage{comment}
\usepackage{accents}
\newlength{\dhatheight}

\newcommand{\Cbb}{\ensuremath{\mathbb{C}}}
\newcommand{\C}{\mathrm{C}}

\newcommand{\N}{\ensuremath{\mathbb{N}}}
\newcommand{\B}{\mathrm{B}}

\let\L\relax
\newcommand{\L}{\mathrm{L}}

\newcommand{\M}{\mathrm{M}}

\newcommand{\diag}{\mathrm{diag}}

\newcommand{\dist}{\mathrm{dist}}

\newcommand{\HS}{\mathrm{HS}}

\newcommand{\Z}{\ensuremath{\mathbb{Z}}}

\newcommand{\R}{\ensuremath{\mathbb{R}}}
\newcommand{\T}{\ensuremath{\mathbb{T}}}

\let\j\relax
\newcommand{\j}{\frak{j}}
\let\S\relax
\newcommand{\S}{\mathrm{S}}

\newcommand{\Id}{\mathrm{Id}}

\newcommand{\VN}{\mathrm{VN}}

\newcommand{\la}{\langle}
\newcommand{\ra}{\rangle}

\renewcommand{\leq}{\ensuremath{\leqslant}}
\renewcommand{\geq}{\ensuremath{\geqslant}}
\newcommand{\qed}{\unskip\nobreak\hfill\penalty50\hskip1em\null\nobreak\hfill\hbox{\vrule height6pt width6pt depth0pt}}
\newcommand{\bnorm}[1]{ \big\| #1  \big\|}

\newcommand{\norm}[1]{\left\Vert#1\right\Vert}

\newcommand{\co}{\colon}

\newcommand{\ot}{\otimes}
\newcommand{\ovl}{\overline}
\newcommand{\otvn}{\ovl\ot}

\newcommand{\cb}{\mathrm{cb}}

\let\i\relax
\newcommand{\i}{\mathrm{i}}
\let\j\relax
\newcommand{\j}{\mathrm{j}}
\let\k\relax
\newcommand{\k}{\mathrm{k}}

\newcommand{\ov}{\overset}

\newcommand{\epsi}{\varepsilon}
\renewcommand{\d}{\mathop{}\mathopen{}\mathrm{d}}
\newcommand{\e}{\mathrm{e}}
\renewcommand{\d}{\mathop{}\mathopen{}\mathrm{d}}

\DeclareMathOperator{\Span}{span}

\DeclareMathOperator{\supp}{supp}

\DeclareMathOperator{\Tr}{Tr}

\let\Re\relax
\DeclareMathOperator{\Re}{Re}

\newtheorem{thm}{Theorem}[section]
\newtheorem{defi}[thm]{Definition}
\newtheorem{prop}[thm]{Proposition}

\newtheorem{cor}[thm]{Corollary}
\newtheorem{lemma}[thm]{Lemma}

\newtheorem{remark}[thm]{Remark}
\newtheorem{example}[thm]{Example}

\newenvironment{proof}[1][]{\noindent {\it Proof #1} : }{\hbox{~}\qed
\smallskip
}
\usepackage{tocloft}
\numberwithin{equation}{section}
\usepackage[nottoc,notlot,notlof]{tocbibind}
\let\OLDthebibliography\thebibliography
\renewcommand\thebibliography[1]{
  \OLDthebibliography{#1}
  \setlength{\parskip}{0pt}
  \setlength{\itemsep}{0pt plus 0.3ex}
}
\newcommand\reallywidehat[1]{\arraycolsep=0pt\relax
\begin{array}{c}
\stretchto{
  \scaleto{
    \scalerel*[\widthof{\ensuremath{#1}}]{\kern-.5pt\bigwedge\kern-.5pt}
    {\rule[-\textheight/2]{1ex}{\textheight}}
  }{\textheight}
}{0.5ex}\\
#1\\
\rule{-1ex}{0ex}
\end{array}
}
\newcommand{\FF}{\mathbb{F}}
\begin{document}
\selectlanguage{english}

\title{\bfseries{Radial Schur multipliers on trees and quantitative Fourier--Schur separation}}
\date{}
\author{\bfseries{C\'edric Arhancet}}
\maketitle

\begin{abstract}
We study ordinary and completely bounded norms of radial Schur multipliers on trees. On the homogeneous tree of countably infinite degree, we show that boundedness implies complete boundedness. The argument realizes arbitrary matrix amplifications of a shifted Toeplitz symbol inside rectangular corners of the original multiplier. It also applies to some locally finite trees of unbounded degree. Consequently, the completely bounded radial multiplier theorem of Gonz\'alez-P\'erez, Parcet, P\'erez Garc\'ia and Wang has no general counterpart with ordinary norms. We also determine the optimal order of the comparison constant for positive definite even symbols. For each $1 < p < \infty$ with $p \neq 2$, we construct radial symbols on finitely generated nonabelian free groups, supported in the ball of radius $N$, whose completely bounded Herz--Schur norms remain bounded as $N \to \infty$, whereas their ordinary Fourier multiplier norms grow at least like $N^{|1-\frac{2}{p}|}$. For each fixed $0 < \vartheta < \pi$, we also determine the exact order $t^{-|1-\frac{2}{p}|}$ of both the ordinary and completely bounded Fourier multiplier norms of the damped radial phases $s \mapsto \e^{(-t+\i\vartheta)|s|}$ as $t \to 0$. Finally, we compute the local amenability modulus of free balls, we bound the Fourier--Schur ratio of free balls from above and below in terms of this modulus, and we show that its restriction to radial symbols grows polynomially in the radius.
\end{abstract}

\makeatletter
\renewcommand{\@makefntext}[1]{#1}
\makeatother
\footnotetext{2020 {\it Mathematics subject classification:} Primary 43A22, 47B10; Secondary 46L07, 46L51, 20E08.\\
{\it Key words}: Schur multipliers, Fourier multipliers, Schatten classes, complete boundedness, trees, free groups, amenability.}
{\hypersetup{linkcolor=blue}\tableofcontents}

\section{Introduction}
\label{sec:Introduction}

The comparison between Fourier and Schur multipliers is a basic problem in noncommutative harmonic analysis. For a discrete group $G$ and a bounded function $a \co G \to \Cbb$, the algebraic Fourier multiplier $M_a \co \mathbb{C}[G] \to \mathbb{C}[G]$ is defined on the group algebra by
\[
M_a\bigg(\sum_{s \in G} c_s\lambda_s\bigg)
\ov{\mathrm{def}}{=} \sum_{s \in G} a(s)c_s\lambda_s,
\]
where both sums are finite. We use the same notation for its bounded extension to the noncommutative $\L^p$-space $\L^p(\VN(G))$, when such an extension exists. When it is well-defined, the Herz--Schur multiplier
\begin{equation}
\label{eq:herz-intro}
M_a^{\HS}([x_{s,t}]) 
\ov{\mathrm{def}}{=} [a(st^{-1})x_{s,t}], \quad x \in S^p_G
\end{equation}
acts on the Schatten space $S^p_G \ov{\mathrm{def}}{=} S^p(\ell^2(G))$. The completely bounded norms of these operators coincide when $G$ is amenable, by \cite[Theorem 2.7 p.~1171]{NeR11}. The corresponding result for locally compact groups is due to Caspers and de la Salle \cite[Corollary 5.3]{CaS15}. At $p = \infty$, the comparison is isometric for completely bounded norms for arbitrary groups by the theorem of Bo\.{z}ejko and Fendler \cite{BoF84}, see also \cite{Spr04}.

Gonz\'alez-P\'erez, Parcet, P\'erez Garc\'ia and Wang \cite{GPPW26} proved a radial Schur multiplier theorem for trees and used it to construct counterexamples to Fourier--Schur transference on nonamenable groups. Let $X$ be a countable tree, endowed with its graph distance $\dist_X$. For a complex function $m \co \N_0 \to \Cbb$, we write $M_{m \circ \dist_X}$ for the Schur multiplier with symbol $X \times X \to \mathbb{C}$, $(x,y) \mapsto m(\dist_X(x,y))$, and we denote by $\widetilde{m}(n) \ov{\mathrm{def}}{=} m(|n|)$ its even extension to $\Z$. If $1 < p < \infty$, their theorem gives
\begin{equation}
\label{eq:radial-comparison-intro}
\norm{M_{m \circ \dist_X}}_{\cb,S^p_{X} \to S^p_{X}} 
\lesssim (p^*)^2 \norm{M_{\widetilde m}}_{\cb,\L^p(\T) \to \L^p(\T)},
\quad p^* \ov{\mathrm{def}}{ = } \max\left\{p,\frac{p}{p-1}\right\}.
\end{equation}
If $X$ has infinite diameter, then $\norm{M_{\widetilde m}}_{\cb,\L^p(\T) \to \L^p(\T)} \leq \norm{M_{m \circ \dist_X}}_{\cb,S^p_{X} \to S^p_{X}}$. The ordinary lower estimate is also valid. However, the proof of the upper estimate uses the extension of $M_{\widetilde m}$ to $S^p$-valued functions.

The distinction between ordinary and completely bounded norms is relevant to a question of Pisier \cite[Conjecture 8.1.12 p.~96]{Pis98}: does there exist a bounded Schur multiplier on $S^p$ which is not completely bounded, for $1 < p < \infty$ and $p \neq 2$? A finite-dimensional strict inequality between the two norms does not suffice. Such finite-dimensional examples are now available for all these exponents by \cite{HST26}, see also \cite{Arh26} for a simpler example in the case $p=4$. On the Fourier side, bounded multipliers which are not completely bounded exist on $\L^p(\T)$, see \cite[Proposition 8.1.3 p.~91]{Pis98} and \cite[Theorem 1.5 p.~18]{Arh12}.

Our first result identifies an obstruction to replacing both completely bounded norms in \eqref{eq:radial-comparison-intro} by ordinary norms. Let $X_\infty$ denote the countable homogeneous tree in which each vertex has countably infinitely many neighbours. We write $r_{\cb,p}$ for the completely bounded norm of the Riesz projection $P_+ \co \L^p(\T)\to \L^p(\T)$.

\begin{thm}
\label{thm:intro-automatic}
Let $1 < p < \infty$ and let $m \co \N_0 \to \Cbb$. The following assertions are equivalent:
\begin{enumerate}
\item the Schur multiplier $M_{m \circ \dist_{X_\infty}}$ is bounded on $S^p_{X_\infty}$,
\item the Schur multiplier $M_{m \circ \dist_{X_\infty}}$ is completely bounded on $S^p_{X_\infty}$,
\item the Fourier multiplier $M_{\widetilde m}$ is completely bounded on $\L^p(\T)$.
\end{enumerate}
Moreover, if the first assertion holds, then
\begin{equation}
\label{eq:intro-automatic-estimate}
\norm{M_{\widetilde m}}_{\cb,\L^p(\T) \to \L^p(\T)} 
\leq (2r_{\cb,p}+\sqrt{3})\norm{M_{m \circ \dist_{X_\infty}}}_{S^p_{X_\infty} \to S^p_{X_\infty}}
\end{equation}
and
\begin{equation}
\label{eq:intro-automatic-schur}
\norm{M_{m \circ \dist_{X_\infty}}}_{\cb,S^p_{X_\infty} \to S^p_{X_\infty}} 
\lesssim (p^*)^3\norm{M_{m \circ \dist_{X_\infty}}}_{S^p_{X_\infty} \to S^p_{X_\infty}}.
\end{equation}
\end{thm}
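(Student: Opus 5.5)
The core implication is 1 $\Rightarrow$ 3 with the estimate \eqref{eq:intro-automatic-estimate}. Once it is in hand, 3 $\Rightarrow$ 2 is the theorem \eqref{eq:radial-comparison-intro} of Gonz\'alez-P\'erez, Parcet, P\'erez Garc\'ia and Wang, and 2 $\Rightarrow$ 1 is trivial. Chaining \eqref{eq:radial-comparison-intro} with \eqref{eq:intro-automatic-estimate} gives
\[
\norm{M_{m \circ \dist_{X_\infty}}}_{\cb} \lesssim (p^*)^2 (2r_{\cb,p}+\sqrt3)\norm{M_{m\circ\dist_{X_\infty}}}.
\]
Since $r_{\cb,p}\lesssim p^*$ (the Riesz projection is a linear combination of the identity and the Hilbert transform, whose cb norm on $\L^p(\T)$ is $O(p^*)$ by the Schatten-valued boundedness of the Hilbert transform), this yields \eqref{eq:intro-automatic-schur}.

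For 1 $\Rightarrow$ 3, I would realize matrix amplifications inside corners. Fix $n$ and consider $\Id_{S^p_n}\ot M_{\widetilde m}$ on $\L^p(\T;S^p_n)$. By a standard Fourier-side discretization, its norm is approximated by that of the Toeplitz Schur multiplier $[\widetilde m(i-j)]$ acting on $S^p_N\ot S^p_n$, uniformly as $N\to\infty$; on $\Z$ this is exact, by transference for the amenable group $\Z$ \cite{NeR11}. So it suffices to find, inside $X_\infty$, vertex families $\{x_{i,\alpha}\}$ and $\{y_{j,\beta}\}$ with $\dist(x_{i,\alpha},y_{j,\beta})$ depending only on $(i,j)$ and having a Toeplitz form. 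The infinite degree allows arbitrarily many branches (the index $\alpha$ plays the role of the matrix amplification). Concretely, fix a geodesic ray $v_0,v_1,\dots$. At $v_i$, attach fresh branches $x_{i,\alpha}$ at depth $k$ hanging off $v_i$, and likewise $y_{j,\beta}$ at depth $k'$ hanging off $v_j$, all in distinct branches. Then $\dist(x_{i,\alpha},y_{j,\beta}) = |i-j|+k+k'$ whenever the branches differ, independently of $\alpha,\beta$.

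The difficulty is that the corner then produces the symbol $m(|i-j|+c)$, a shifted symbol, tensored with the all-ones matrix. This is not $\widetilde m(i-j)\ot$ an identity amplification, and the all-ones matrix in $\alpha,\beta$ yields only a rank-one amplification, not an arbitrary $x\in S^p_N\ot S^p_n$. The fix is that a Schur multiplier restricted to a rectangular corner of product form $\{(i,\alpha)\}\times\{(j,\beta)\}$ with symbol $\phi(i,j)$ acts as $M_\phi\ot\Id_{S^p_n}$ on that corner, which is exactly the amplification; so the all-ones structure is precisely what is needed. Thus the ordinary norm on $S^p_{X_\infty}$ dominates the cb norm of the shifted Toeplitz symbols $(i,j)\mapsto m(|i-j|+c)$.

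The main obstacle is to recover $\widetilde m$ from these shifted symbols. I would use the one-sided shape: placing the $x$'s above the $y$'s along the ray, for instance taking $i\geq j$ separated through a common ancestor, gives the Toeplitz symbol $m(i-j+c)$ on a half-plane, i.e.\ the restriction $\widetilde m\cdot 1_{n\geq0}$, shifted. Composing with the Riesz projection $P_+$, and symmetrically $P_-$, then reconstructs $M_{\widetilde m}$ as $P_+ M P_+ + P_- M P_- +$ (a diagonal term). The diagonal $m(0)$ is controlled by $\sqrt{3}\|M\|$ or a similar convexity bound, and the shifts are removed by modulation by characters, which is isometric. This bookkeeping accounts for the constant $2r_{\cb,p}+\sqrt3$. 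I expect arranging the geometry so that both the half-Toeplitz structure and the shift cancellation occur exactly to be the delicate step.
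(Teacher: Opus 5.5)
Your embedding step is correct and is the paper's own. With depth-one branches ($k=k'=1$) off a geodesic in $X_\infty$ you get $\dist_{X_\infty}(x_{i,\alpha},y_{j,\beta})=|i-j|+2$. Since the symbol is constant in $(\alpha,\beta)$, the rectangular corner carries $M^{\HS}_{b}\ot\Id_{S^p_k}$ on $\{0,\ldots,L\}$, where $b(n)=m(|n|+2)$; your self-correction about the ``rank-one amplification'' lands on the right statement. With \eqref{eq:Z-transference} this gives $\norm{M_b}_{\cb,\L^p(\T)\to\L^p(\T)}\leq C$, where $C=\norm{M_{m\circ\dist_{X_\infty}}}_{S^p_{X_\infty}\to S^p_{X_\infty}}$. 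Your deduction of \eqref{eq:intro-automatic-schur} from \eqref{eq:radial-comparison-intro} is also fine. The paper obtains $r_{\cb,p}\lesssim p^*$ internally from $2r_{\cb,p}-1\leq\kappa_p$ (Proposition~\ref{prop:cosine-lower}) rather than from the Schatten-valued Hilbert transform, but either source works.

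The gap is the recovery step, which you defer as ``bookkeeping'' and describe incorrectly.

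First, no further geometry is needed. A ``half-plane'' configuration cannot come from a rectangular corner anyway, since a corner always produces a symbol defined at every $(i,j)$. The one-sided pieces come from applying $P_+$ and $P_-=RP_+R$ to the already cb-bounded two-sided multiplier $M_b$. Note that $P_+M_bP_+=P_+M_b$, which has cb norm at most $r_{\cb,p}C$.

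Second, your reconstruction ``$P_+MP_++P_-MP_-+$ (diagonal)'' is not an identity. Translating $1_{\{n\geq0\}}b$ by $+2$ and $1_{\{n\leq0\}}b$ by $-2$ recovers $\widetilde m$ only for $|n|\geq2$. So the frequencies $n=\pm1$ are missing as well, not just $n=0$. The correct identity is
\[
\widetilde m(n)=1_{\{n\geq2\}}b(n-2)+1_{\{|n|<2\}}m(|n|)+1_{\{n\leq-2\}}b(n+2),\quad n\in\Z.
\]
The middle term is convolution with $K(z)=m(1)\overline{z}+m(0)+m(1)z$. Its cb norm is at most
\[
\norm{K}_{\L^1(\T)}\leq\norm{K}_{\L^2(\T)}=\big(|m(0)|^2+2|m(1)|^2\big)^{\frac12}\leq\sqrt3\,C,
\]
using $|m(n)|\leq C$, obtained by testing on matrix units.

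This is where $\sqrt3$ comes from. It is not a bound on $m(0)$ alone, which is simply at most $C$. It also forces the minimal shift: with general depths the shift is $c=k+k'$, the leftover block has $2c-1$ frequencies, and you would get $\sqrt{2c-1}$ instead of $\sqrt3$. Filling in this decomposition with $c=2$ gives exactly \eqref{eq:intro-automatic-estimate}.
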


The proof uses only rectangular compressions, transference on $\Z$ and a decomposition of a shifted symbol. The same argument applies to a geometric multiplicity condition stated in Section~\ref{sec:automatic}. In particular, infinite valence at a vertex is not necessary: a locally finite tree may contain the required finite configurations. Theorem~\ref{thm:intro-automatic} does not assert equality of the ordinary and completely bounded norms.

For a discrete group $G$ and a nonempty finite subset $\Omega$ of $G$, define
\begin{equation}
\label{eq:amenability-modulus}
\delta_G(\Omega)
\ov{\mathrm{def}}{=}
\inf_{\norm{\xi}_{\ell^2_G} = 1}
\sup_{s \in \Omega}
\big|1-\la\lambda_s\xi,\xi \ra_{\ell^2_G}\big|.
\end{equation}
The infimum is over unit vectors in the complex Hilbert space $\ell^2_G$. This is the amenability modulus of \cite[Section 4]{GPPW26}. It vanishes on every finite set if $G$ is amenable, and it tends to $1$ along any increasing sequence of finite sets exhausting $G$ if $G$ is nonamenable, see \cite[Section 1]{PRS22} and \cite[Section 4]{GPPW26}. Thus $1-\delta_G(\Omega)$ measures how much amenability survives at the scale $\Omega$. If $r \geq 2$ is an integer, we compute \eqref{eq:amenability-modulus} for the balls $B_N \ov{\mathrm{def}}{=} \{s \in \FF_r : |s| \leq N \}$ of a free group $\FF_r$. For $q \ov{\mathrm{def}}{=} 2r-1$, we prove
\begin{equation}
\label{eq:intro-modulus}
1-\delta_{\FF_r}(B_N) = q^{-\frac{N}{2}}\left(1+\frac{q-1}{q+1}N\right).
\end{equation}
By a classical computation of Cohen \cite{Coh82}, the right-hand side of \eqref{eq:intro-modulus} is the operator norm of the spherical average $\frac{1}{|W_N|}\sum_{s \in W_N}\lambda_s$, where $W_N \ov{\mathrm{def}}{=} \{s \in \FF_r : |s| = N\}$. So \eqref{eq:intro-modulus} can be read as a local and quantitative form of Kesten's amenability criterion \cite{Kes59}. Indeed, for any discrete group $G$ and any probability measure $\mu$ on $\Omega$, averaging the inequality $|1-\la\lambda_s\xi,\xi\ra_{\ell^2_G}| \geq 1-\Re\la\lambda_s\xi,\xi\ra_{\ell^2_G}$ against $\mu$ gives
\begin{equation}
\label{eq:intro-Kesten}
\delta_G(\Omega) 
\geq 1-\norm{\lambda(\mu)}_{\ell^2_G \to \ell^2_G},
\quad 
\lambda(\mu) \ov{\mathrm{def}}{=} \sum_{s \in \Omega}\mu(s)\lambda_s.
\end{equation}
Theorem~\ref{thm:modulus} shows that \eqref{eq:intro-Kesten} is an equality for the balls of $\FF_r$, with $\mu$ the uniform probability measure on the outer sphere $W_N$. In particular, the inner spheres play no role, since $\delta_{\FF_r}(W_N) = \delta_{\FF_r}(B_N)$ by Corollary~\ref{cor:spheres}. Almost optimal vectors are given by the normalized truncations of $s \mapsto q^{-\frac{|s|}{2}}$, which are positive and radial. The variant of the modulus used in \cite{PRS22} coincides with \eqref{eq:amenability-modulus} for discrete groups, since the infimum may be restricted to nonnegative unit vectors, see Lemma~\ref{lem:modulus-positive-vectors}.

This makes the local Fourier--Schur transference of Parcet, Ricard and de la Salle explicit on free balls. For $1 < p < \infty$ and a nonempty finite subset $\Omega$ of a discrete group $G$, following \cite[Section 4]{GPPW26}, we define
\begin{equation}
\label{eq:local-ratio}
\mathcal{R}_{G,p}(\Omega)
\ov{\mathrm{def}}{=}
\sup_{\substack{a \co G \to \mathbb{C},\ a \neq 0\\ \supp a \subset \Omega}}
\frac{\norm{M_a}_{\cb,\L^p(\VN(G)) \to \L^p(\VN(G))}}
{\norm{M_a^\HS}_{\cb,S^p_G \to S^p_G}}.
\end{equation}
Since $\Omega$ is finite, all the multipliers appearing in this supremum are completely bounded. Moreover, testing on matrix units gives $\norm{M_a^\HS}_{\cb,S^p_G \to S^p_G} \geq \norm{a}_{\ell^\infty_G}$, so the denominator is positive whenever $a \neq 0$.

If $p$ is an even integer, combining \cite[Theorem 1.3]{PRS22} with \eqref{eq:intro-modulus}, we obtain in Corollary~\ref{cor:PRS-free} that
\begin{equation}
\label{eq:intro-PRS-free}
\mathcal{R}_{\FF_r,p}(B_N) 
\leq \frac{q^{\frac{pN}{4}}}{1+\frac{q-1}{q+1}\cdot\frac{pN}{2}}.
\end{equation}
Since \eqref{eq:intro-modulus} is exact, \eqref{eq:intro-PRS-free} is obtained by direct substitution into the local transference estimate of \cite{PRS22}. For comparison, Proposition~\ref{prop:finite-support-ratio} gives the elementary estimate
\begin{equation}
\label{eq:intro-finite-support}
\mathcal{R}_{G,p}(\Omega)
\leq |\Omega|^{|\frac{1}{p}-\frac{1}{2}|}
\end{equation}
for any $1 < p < \infty$ and any nonempty finite subset $\Omega$ of a discrete group $G$. On free balls, this yields
\[
\mathcal{R}_{\FF_r,p}(B_N)
\leq 2^{\frac{\alpha_p}{2}}q^{\frac{\alpha_p N}{2}},
\quad \alpha_p \ov{\mathrm{def}}{=} |1-\frac{2}{p}|.
\]
For each even integer $p \geq 2$, we have
\[
\frac{p}{4}-\frac{\alpha_p}{2}
= \frac{p^2-2p+4}{4p}
> 0.
\]
Consequently, for fixed $p$ and $q$, the upper bound in \eqref{eq:intro-PRS-free} is asymptotically larger than the cardinality bound as $N \to \infty$. At $p = 2$, the cardinality estimate gives the exact value $\mathcal{R}_{\FF_r,2}(B_N) = 1$, whereas the right-hand side of \eqref{eq:intro-PRS-free} grows like $\frac{q^{\frac{N}{2}}}{N}$. Thus the loss in this particular application of local transference does not come from estimating the amenability modulus, whose value is known exactly. On the other hand, \cite[Theorem B1]{GPPW26} shows that $\mathcal{R}_{\FF_r,p}(B_N) \to \infty$ as $N \to \infty$ when $|\frac{1}{p}-\frac{1}{2}| > \frac{1}{6}$, without a rate. Our next result gives a polynomial rate for any $p \neq 2$.

\begin{thm}
\label{thm:intro-finite}
Suppose that $1 < p < \infty$ and $p \neq 2$. Put $\alpha_p \ov{\mathrm{def}}{=} |1-\frac{2}{p}|$. There exist constants $c_p > 0$ and $N_p \geq 1$ such that, for any $2 \leq r < \infty$ and any integer $N \geq N_p$,
\begin{equation}
\label{eq:intro-ratio}
\mathcal{R}_{\FF_r,p}(B_N) 
\geq c_p N^{\alpha_p}.
\end{equation}
More precisely, the symbols $a_N(s) \ov{\mathrm{def}}{=} \i^{|s|}1_{B_N}(s)$ satisfy
\[
\norm{M_{a_N}^\HS}_{\cb,S^p_{\FF_r} \to S^p_{\FF_r}}
\lesssim (p^*)^3,
\quad
\norm{M_{a_N}}_{\L^p(\VN(\FF_r)) \to \L^p(\VN(\FF_r))}
\geq c_p N^{\alpha_p}.
\]
There also exist smooth radial cutoffs $b_N$, supported in $B_N$, for which
\begin{equation}
\label{eq:intro-smooth}
\norm{M_{b_N}^\HS}_{\cb,S^p_{\FF_r} \to S^p_{\FF_r}}
\lesssim p^*,
\quad
\norm{M_{b_N}}_{\L^p(\VN(\FF_r)) \to \L^p(\VN(\FF_r))}
\geq c_p N^{\alpha_p}.
\end{equation}
The implicit constants in the Herz--Schur estimates are independent of $p$, $r$ and $N$.
\end{thm}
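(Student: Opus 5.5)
The two Herz--Schur estimates will come from the radial Schur theorem \eqref{eq:radial-comparison-intro} on the Cayley tree of $\FF_r$. Since $st^{-1}$ has length $\dist(s,t)$ in the Cayley graph, $M_{a_N}^{\HS}$ is exactly the radial Schur multiplier $M_{m_N\circ\dist}$ with $m_N(n)=\i^n 1_{[0,N]}(n)$. By \eqref{eq:radial-comparison-intro} it therefore suffices to bound $\norm{M_{\widetilde{m}_N}}_{\cb,\L^p(\T)\to\L^p(\T)}$ by $O(p^*)$, uniformly in $N$; the $(p^*)^2$ loss in \eqref{eq:radial-comparison-intro} then gives $(p^*)^3$. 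I would write
\[
\widetilde m_N(n)=\i^{|n|}1_{|n|\leq N}.
\]
On $n\geq 0$ this is $\i^n$, which is a rotation of $\T$ and hence a complete isometry. On $n<0$ it is $(-\i)^n$. So $\widetilde m_N$ is a combination of rotations composed with the projections $1_{[0,N]}$ and $1_{[-N,-1]}$. Each interval projection is a modulated difference of Riesz projections, so its cb norm is at most $2r_{\cb,p}\lesssim p^*$. For $b_N$, I would choose $b_N(s)=\i^{|s|}\varphi(|s|/N)$ with $\varphi$ smooth, equal to $1$ near $0$ and supported in $[0,1)$. The phase still splits into the two half-lines, and the smooth cutoff $\varphi(|n|/N)$ is a de la Vallée Poussin type multiplier, which is an $\L^1$ kernel with norm $O(1)$. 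The half-line split costs only one Riesz projection, giving $\lesssim p^*$. Here the constant must not carry the $(p^*)^2$ loss, so for the smooth symbol I would invoke a refined form of the radial theorem: the symbol $\varphi(|s|/N)$ can be written as an integral of radial multipliers that are completely bounded on $S^p$ with constant $O(1)$, for instance via the Haagerup--Szwarc description of radial multipliers on trees, leaving only the phase to be handled by \eqref{eq:radial-comparison-intro}. This is the point where I am least sure of the exact mechanism, and I would revisit it.

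The lower bound on the ordinary Fourier norm is the main obstacle. The plan is to restrict to a copy of $\Z$ and lose nothing. Take a generator $g$. The subgroup $\langle g\rangle\cong\Z$ sits in $\VN(\FF_r)$ with a conditional expectation, so $\L^p(\T)$ embeds completely isometrically and complementedly. The restriction of $a_N$ to it is $\i^{|n|}1_{|n|\leq N}$. Its ordinary norm on $\L^p(\T)$ is, however, only $O(p^*)$, so this restriction gives nothing.

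Instead I would use non-commuting directions. The idea is that the free product structure forces the multiplier to act on words $s$ of length $|s|=\sum|n_j|$ for reduced words $g_1^{n_1}g_2^{n_2}\cdots$. The phase $\i^{|s|}$ then factors as a product of $\i^{|n_j|}$, a tensor-like action on free products. Taking $x=\sum_{k}\epsi_k (u_1u_2)^{k}$-type elements or, better, the $\L^p$-free-Khintchine/Haagerup inequality on spheres $W_n$, one compares $M_{a_N}$ with a radial multiplier. Its norm on radial elements is controlled by the free-Hilbert-transform/spherical-function calculus. Concretely, I would test against $f=\sum_{n\leq N}\theta^n \chi_n$, where $\chi_n=|W_n|^{-1/2}\sum_{|s|=n}\lambda_s$. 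Haagerup's inequality gives $\norm{\chi_n}_p$ comparable with $\norm{\chi_n}_2$, up to polynomial factors in $n$, and this polynomial mismatch between $\L^p$ and $\L^2$ norms of radial sums yields $N^{\alpha_p}$. For $p>2$ one lower-bounds $\norm{M_{a_N}f}_p$ by the $\L^2$-type quantity $\sqrt N$ while $\norm{f}_p\lesssim N^{1/p}$-ish, after choosing $\theta$ so that $f$ is concentrated and $M_{a_N}f$ is spread. The case $p<2$ follows by duality, since $a_N$ is self-adjoint up to conjugation. Uniformity in $r$ should come from the explicit Cohen-type spherical computations.
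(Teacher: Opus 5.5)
Your Herz--Schur estimate for $a_N$ is the paper's argument: two modulated interval projections of cb norm at most $2r_{\cb,p}$ each on $\T$, then the factor $\kappa_p^2$ from the tree theorem.

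\textbf{The ordinary Fourier lower bound.} This is the heart of the statement, and your plan does not prove it. On functions supported in $B_N$, the multiplier $M_{a_N}$ only multiplies the orthonormal coefficients on the $\chi_n$ by unimodular numbers. Hence $\norm{M_{a_N}f}_2=\norm{f}_2\leq\norm{f}_p$ for $p>2$, since the trace is a state. Bounding the numerator below by an $\L^2$-type quantity therefore never gives a ratio larger than $1$. In particular, $\norm{f}_p\lesssim N^{\frac{1}{p}}$ is impossible when $\norm{f}_2\approx\sqrt{N}$. Even if it held, $\sqrt{N}/N^{\frac{1}{p}}=N^{\frac{\alpha_p}{2}}$ is half the required power. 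Haagerup's inequality only gives upper bounds on $\L^p$ norms of radial sums. It can control $\norm{f}_p$, but it cannot produce a large $\norm{M_{a_N}f}_p$.

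What is missing is the spectral model of the radial algebra. The normalized $\chi_n$ correspond to orthonormal polynomials $Q_n$ for the Kesten measure $\nu_\gamma$. Here $Q_n(\cos\theta)$ is an explicit combination of $\sin((n\pm1)\theta)/\sin\theta$, and the angular density is comparable to $\sin^2\theta$ uniformly in $r$. The growth then comes from genuine $\L^p$ concentration, compared in two regimes:
\begin{itemize}
\item $u_N=\sum_k\psi(\frac{k}{N})Q_k$ peaks at the spectral endpoint $\theta=0$, with height $\approx N^2$ (since $Q_k(1)\approx k$) on a set of $\nu_\gamma$-measure $\approx N^{-3}$ (quadratic vanishing of the density). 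So $\norm{u_N}_p\approx N^{2-\frac{3}{p}}$.
\item The modulation $v_{N,\vartheta}=\sum_k\psi(\frac{k}{N})\e^{-\i k\vartheta}Q_k$ peaks at the interior point $\vartheta$, with height $\approx N$ on measure $\approx N^{-1}$. So $\norm{v_{N,\vartheta}}_p\approx N^{1-\frac{1}{p}}$.
\end{itemize}
Since $M_{a_N}v_{N,\frac{\pi}{2}}=u_N$, this gives the ratio $N^{1-\frac{2}{p}}$ for $p>2$; for $p<2$, use $u_N$ as input. The matching upper bounds need pointwise decay away from the peaks, which is why the profile $\psi$ is smooth. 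They also need the cancellation of the numerator at $\theta\in\{0,\pi\}$ against $\sin\theta$. Your vector $\sum_{n\leq N}(-\i)^n\chi_n$ is close to a sharp-cutoff $v_{N,\frac{\pi}{2}}$, but the free-product and Haagerup heuristics do not supply these estimates. Uniformity in $r$ comes from the uniform comparison of the density with $\sin^2\theta$, not from Cohen's formula.

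\textbf{The smooth cutoffs.} Your mechanism still yields only $(p^*)^3$. Routing the phase $\i^{|s|}$ through \eqref{eq:radial-comparison-intro} costs $(p^*)^2$ times the cb norm of the torus symbol $\i^{|n|}$. That norm is itself $\gtrsim p^*$: after rotating by $(-\i)^n$ and restricting to odd frequencies, it becomes a copy of $2P_+-\Id$. This holds however well $\varphi(|s|/N)$ is controlled.

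The fix is to avoid the even extension. If $m(n)=\int_\T z^n\d\mu(z)$ holds for $n\geq0$ only, then $M_{m\circ\dist_X}=\int_\T\Delta^X_z\d\mu(z)$ has cb norm at most $\kappa_p\norm{\mu}_{\M(\T)}$ (Lemma~\ref{lem:measure-phase}). The paper takes $\eta$ supported in $(\frac{1}{8},\frac{7}{8})$. Then the density $\sum_k\e^{\i\vartheta k}\eta(\frac{k}{N})z^{-k}$ has uniformly bounded $\L^1$ norm by Lemma~\ref{lem:smooth-kernel}. With your $\varphi$, equal to $1$ near $0$, take instead the rotated de la Vall\'ee Poussin density $\sum_{k\in\Z}\i^k\varphi(\frac{|k|}{N})z^{-k}$; note $\i^k$, not $\i^{|k|}$. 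Either way you get a single factor $\kappa_p\lesssim p^*$ and no Riesz projection.
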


The Fourier lower estimate is detected on the abelian radial subalgebra of the von Neumann algebra $\VN(\FF_r)$. We use smooth finite sums of its orthogonal polynomials. Without modulation these sums concentrate at an endpoint of the radial spectrum, where the spectral measure has quadratic vanishing in angular coordinates. Multiplication of their coefficients by $\e^{\i k\vartheta}$ moves the concentration to an interior point. We give two-sided norm estimates for both finite sums. The argument is a finite-frequency version of the concentration mechanism used in \cite[Lemma 2.4 and Theorem B2]{GPPW26}.

The same estimates, combined with the Hankel-matrix formula \cite{HSS10} of Haagerup, Steenstrup and Szwarc, give a two-sided result for damped phases.

\begin{thm}
\label{thm:intro-abel}
Suppose that $1 < p < \infty$ and $p \neq 2$. Put $\alpha_p \ov{\mathrm{def}}{=} |1-\frac{2}{p}|$. Fix an integer $r \geq 2$ and an angle $0 < \vartheta < \pi$. For $t > 0$, define $a_{t,\vartheta}(s) \ov{\mathrm{def}}{=} \e^{(-t+\i\vartheta)|s|}$ for $s \in \FF_r$. For $0 < t \leq 1$, we have
\begin{equation}
\label{eq:intro-abel}
\norm{M_{a_{t,\vartheta}}}_{\L^p(\VN(\FF_r)) \to \L^p(\VN(\FF_r))}
\approx_{p,\vartheta}
\norm{M_{a_{t,\vartheta}}}_{\cb,\L^p(\VN(\FF_r)) \to \L^p(\VN(\FF_r))}
\approx_{p,\vartheta} t^{-\alpha_p},
\end{equation}
where the constants may be chosen independently of $2 \leq r < \infty$. On the other hand,
\[
\sup_{t>0}\norm{M_{a_{t,\vartheta}}^\HS}_{\cb,S^p_{\FF_r} \to S^p_{\FF_r}} 
\lesssim p^*.
\]
\end{thm}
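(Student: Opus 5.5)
The proof has three parts. First comes the Herz--Schur bound. Then we prove the upper bound $t^{-\alpha_p}$ for the completely bounded Fourier norm. Finally, we prove the lower bound $t^{-\alpha_p}$ for the ordinary Fourier norm. Since the ordinary norm is at most the completely bounded one, these give both equivalences in \eqref{eq:intro-abel}.

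For the Herz--Schur estimate, the Cayley graph of $\FF_r$ is the tree $X_{2r}$. The Herz--Schur multiplier $M^{\HS}_{a_{t,\vartheta}}$ is therefore the radial Schur multiplier $M_{m\circ\dist}$ with $m(n)=z^n$, where $z=\e^{-t+\i\vartheta}$ and $|z|<1$. By \eqref{eq:radial-comparison-intro}, it suffices to bound $\norm{M_{\widetilde m}}_{\cb,\L^p(\T)\to\L^p(\T)}$. The even extension is $\widetilde m(n)=z^{|n|}$, and we split it as
\[
\widetilde m(n)=z^{n}1_{n\geq 0}+\bar{\ }\text{(reflected)}\;z^{-n}1_{n<0}.
\]
The first piece equals $P_+$ composed with the multiplier $n\mapsto z^n$ on $\N_0$. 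That multiplier is convolution with the Poisson-type kernel $\frac{1}{1-z\e^{\i\theta}}$ restricted to analytic parts. It is more convenient to write $z^n=\e^{-tn}\e^{\i\vartheta n}$ for $n\geq 0$. Here $\e^{\i\vartheta n}$ is a translation, hence a complete isometry. The map $n\mapsto \e^{-t|n|}$ is the Poisson semigroup, a completely contractive positive multiplier. Hence $\widetilde m(n)=\e^{-t|n|}\e^{\i\vartheta\,\mathrm{sgn}(n)|n|}\dots$, and in fact $z^{|n|}=\e^{-t|n|}\big(\e^{\i\vartheta n}1_{n\geq0}+\e^{-\i\vartheta n}1_{n<0}\big)$. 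This gives the bound $\lesssim r_{\cb,p}\lesssim p^*$.

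This route through \eqref{eq:radial-comparison-intro} loses a factor $(p^*)^2$, whereas the statement claims $\lesssim p^*$ uniformly in $t$. The sharper estimate should instead come from the Haagerup--Steenstrup--Szwarc description. A radial Schur multiplier on a tree is controlled by the trace-class or Schatten norm of the Hankel matrix $[m(i+j)-m(i+j+2)]$. For geometric $m$ this matrix has rank one, with Hankel entries $(1-z^2)z^{i+j}$. I would use this to write $M_{m\circ\dist}$ explicitly as $\sum_k$ of products of a diagonal factor and projection-type pieces onto horocyclic structures. I would then bound it directly on $S^p$ with one Riesz/triangular-truncation constant $\lesssim p^*$. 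Concretely, $z^{\dist(x,y)}$ factors through Busemann-type functions along a fixed boundary point. Upper-triangular truncation relative to the horocycle order costs $p^*$, and the remaining factors $z^{h(x)}\bar{\ }$ are bounded diagonal-type Schur multipliers of norm $\leq 1$.

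For the Fourier upper bound, $a_{t,\vartheta}$ is radial on $\FF_r$. I would restrict to the radial subalgebra and invoke the paper's earlier two-sided estimates. Those estimates cover finite sums of the spherical orthogonal polynomials with modulated coefficients $\e^{\i k\vartheta}$, together with their complete versions. Taking the Abel summation $\sum_k \e^{-tk}\e^{\i k\vartheta}\chi_k$ of these polynomial blocks gives a superposition of cutoffs at scales $N\sim 1/t$. Summing the dyadic pieces with the weights $\e^{-t2^j}$ then yields $\lesssim t^{-\alpha_p}$, using the Hankel formula to pass from radial symbols to multiplier norms.

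For the lower bound, I would test on the same concentrating elements used for Theorem~\ref{thm:intro-finite}, with $N\approx 1/t$. The damping $\e^{-tk}$ stays comparable to $1$ on frequencies $k\leq N$, so the interior concentration at the angle $\vartheta$ produces the norm ratio $\gtrsim N^{\alpha_p}\approx t^{-\alpha_p}$.

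The main obstacle is the Fourier upper bound. Showing that the Abel-summed modulated series gives exactly $t^{-\alpha_p}$, uniformly in $r$, requires a careful local analysis near the interior spectral point $\cos\vartheta$. The Plancherel density is nonvanishing there, unlike at the endpoint where it vanishes quadratically. Combined with $p$-interpolation between $\L^2$ and $\L^{1}$/$\L^\infty$ kernel bounds, this local analysis should give the factor $t^{-|1-2/p|}$.
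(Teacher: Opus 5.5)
Your lower-bound strategy is the paper's. The Herz--Schur bound and the Fourier upper bound, however, both have genuine gaps.

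\textbf{Herz--Schur bound.} Neither of your routes gives $\lesssim p^*$ uniformly in $t$. The route through \eqref{eq:radial-comparison-intro} costs $(p^*)^3$, as you note. The horocyclic factorization is not an argument. With a Busemann function $h$ one has $\dist(x,y)=h(x)+h(y)-2h(x\wedge y)$, but $h$ is unbounded below, so the factors $z^{h(x)}$ are unbounded for $|z|<1$, and no triangular truncation is actually exhibited.

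The missing observation is that the tree only sees $m$ on $\N_0$. The sequence $n\mapsto z^n=\e^{-tn}\e^{\i\vartheta n}$, $n\geq 0$, is the moment sequence of the rotated Poisson probability measure $\mu$, whose density with respect to $\mu_\T$ is $w\mapsto\sum_{k\in\Z}\e^{-t|k|}(\e^{\i\vartheta}\overline{w})^k\geq 0$. After identifying $M^{\HS}_{a_{t,\vartheta}}$ with $M_{m\circ\dist_X}$ on the Cayley tree $X$, it is the average $\int_\T\Delta_w^X\d\mu(w)$. Lemma~\ref{lem:measure-phase} then gives cb norm at most $\kappa_p\norm{\mu}_{\M(\T)}=\kappa_p\lesssim p^*$. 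Feeding the positive definite extension $\e^{-t|n|}\e^{\i\vartheta n}$ into the transference of Theorem~\ref{thm-tree-input} would still cost $\kappa_p^2$. A single factor $\kappa_p$ comes only from averaging the phases $\Delta_w^X$ directly.

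\textbf{Fourier upper bound.} This part is not proved. Restricting to the radial subalgebra can only give lower bounds for the norm of $M_{a_{t,\vartheta}}$ on $\L^p(\VN(\FF_r))$, and says nothing about its cb norm. Lemma~\ref{lem:packet-norms} computes norms of specific elements, not multiplier norms. Your dyadic decomposition into cutoffs would need upper bounds for the cutoff multipliers. The paper obtains these only via the Hankel formula (Remark~\ref{rem:sharp-cutoff-log}) or from the damped-phase bound itself (Corollary~\ref{cor:smooth-two-sided}), so invoking them here is circular or roundabout.

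The rank-one Hankel matrix you wrote down is exactly the missing $p=\infty$ endpoint. The matrix $H=[(1-z^2)z^{j+k}]_{j,k\geq0}$ has trace norm $\frac{|1-z^2|}{1-|z|^2}$. Since $\mathcal V(B)=SBS^*$ is contractive on $S^1$, the Haagerup--Steenstrup--Szwarc formula bounds the Schur multiplier $(s,t)\mapsto z^{|st^{-1}|}$ on $\B(\ell^2_{\FF_r})$ by this quantity, uniformly in $r$. Bo\.{z}ejko--Fendler then gives $\norm{M_{a_{t,\vartheta}}}_{\cb,\VN(\FF_r)\to\VN(\FF_r)}\leq\frac{2}{1-\e^{-2t}}\lesssim t^{-1}$ for $0<t\leq1$. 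Interpolating with $p=2$, where the cb norm is $1$, and using duality yields $t^{-\alpha_p}$. No local spectral analysis is needed.

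\textbf{Lower bound.} The phrase ``the damping stays comparable to $1$'' must be implemented as follows. For $p>2$, use the precompensated input $\sum_k\psi(\frac kN)\e^{tk}\e^{-\i k\vartheta}Q_k$, whose image is $u_N$. For $p<2$, use the input $u_N$, whose image is the conjugate of a modulated sum with profile $\psi(y)\e^{-tNy}$. With $N=\lfloor 1/t\rfloor$ one has $tN\in[\frac12,1]$, so you need the final uniformity assertion of Lemma~\ref{lem:packet-norms} for the profiles $\psi(y)\e^{uy}$, $u\in[-1,1]$.
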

The same damped-phase estimates also give the exact order $N^{\alpha_p}$ of the ordinary and completely bounded Fourier multiplier norms of the smooth radial cutoffs appearing in Theorem~\ref{thm:intro-finite}, see Corollary~\ref{cor:smooth-two-sided}.

Combining Theorem~\ref{thm:intro-finite}, \eqref{eq:intro-finite-support} and \eqref{eq:intro-modulus}, we obtain in Corollary~\ref{cor:free-ratio-bounds} that, for $1 < p < \infty$ with $p \neq 2$ and any sufficiently large integer $N$,
\begin{equation}
\label{eq:intro-two-sided}
c_p\left(\frac{2}{\log q}\log\frac{1}{1-\delta_{\FF_r}(B_N)}\right)^{\alpha_p}
\leq \mathcal R_{\FF_r,p}(B_N)
\leq 2^{\frac{\alpha_p}{2}}(1+N)^{\alpha_p}
\big(1-\delta_{\FF_r}(B_N)\big)^{-\alpha_p}.
\end{equation}
In other words, if $L_N \ov{\mathrm{def}}{=} \log\frac{1}{1-\delta_{\FF_r}(B_N)}$, then $\mathcal{R}_{\FF_r,p}(B_N)^{\frac{1}{\alpha_p}}$ lies between a constant multiple of $L_N$ and a constant multiple of $L_N\e^{L_N}$ for sufficiently large $N$, with constants depending only on $p$ and $q$. Indeed, \eqref{eq:intro-modulus} shows that $L_N$ is comparable to $N$, with constants depending only on $q$. Corollary~\ref{cor:smooth-two-sided} shows that the norm ratios of the smooth cutoffs $b_N$ of Theorem~\ref{thm:intro-finite} are exactly of order $N^{\alpha_p}$, so that the lower bound of Theorem~\ref{thm:intro-finite} cannot be improved with these symbols. This is our contribution to \cite[Problem 4.2]{GPPW26} on free balls.

For radial symbols, the exponential upper bound can be replaced by a polynomial one. Let $\mathcal{R}^{\mathrm{rad}}_{\FF_r,p}(B_N)$ denote the supremum in \eqref{eq:local-ratio} restricted to radial symbols, that is, to functions of the word length. Using the Hankel-matrix formula of \cite{HSS10}, we show in Proposition~\ref{prop:radial-polynomial} that
\begin{equation}
\label{eq:intro-radial-polynomial}
c_pN^{\alpha_p}
\leq \mathcal{R}^{\mathrm{rad}}_{\FF_r,p}(B_N)
\leq 2^{\frac{\alpha_p}{2}}(N+2)^{\frac{3\alpha_p}{2}}
\end{equation}
for $p \neq 2$ and any sufficiently large integer $N$, uniformly in $r$. Thus the radial Fourier--Schur ratio of free balls grows at least like $N^{\alpha_p}$ and at most like $N^{\frac{3\alpha_p}{2}}$. Since $L_N$ is comparable to $N$, the same holds with $N$ replaced by $L_N$.

\paragraph{Structure of the paper}
Section~\ref{sec:preliminaries} recalls the required norm conventions and the radial phase estimate. Section~\ref{sec:automatic} proves Theorem~\ref{thm:intro-automatic} and its geometric extensions.  Section~\ref{sec:constants} determines the optimal dependence on $p$ for positive definite even symbols. Section~\ref{sec:modulus} computes the local amenability modulus of free balls, proves the elementary estimate \eqref{eq:intro-finite-support} and deduces Corollary~\ref{cor:PRS-free}. Section~\ref{sec:packets} uses the spectral model of the radial subalgebra of $\VN(\FF_r)$ to prove Theorem~\ref{thm:intro-finite} and establish the quantitative bounds of Corollary~\ref{cor:free-ratio-bounds}. It then treats damped radial phases, proves Theorem~\ref{thm:intro-abel} and Corollary~\ref{cor:smooth-two-sided}, and establishes the polynomial bound \eqref{eq:intro-radial-polynomial} for radial symbols.

\section{Preliminaries}
\label{sec:preliminaries}

We use $\N \ov{\mathrm{def}}{=} \{1,2,\ldots\}$, $\N_0 \ov{\mathrm{def}}{=} \{0,1,2,\ldots\}$ and equip $\T$ with its normalized Haar measure $\mu_\T$. All Hilbert spaces are complex, and their inner products are linear in the first variable. For $1 < p < \infty$, we write $p' \ov{\mathrm{def}}{=} \frac{p}{p-1}$ and $p^* \ov{\mathrm{def}}{=} \max\{p,p'\}$. The notation $A \lesssim B$ means that $A \leq C B$ for a numerical constant $C$. Subscripts indicate the permitted dependence of a constant. We use $A \approx B$ when both inequalities hold.

For a countable set $I$, we abbreviate $S^p(\ell^2_I)$ to $S^p_I$. We use the unnormalized trace $\Tr$ on matrix algebras and on $\B(\ell^2_I)$. Thus $\norm{x}_{S^p_I} \ov{\mathrm{def}}{=} (\Tr|x|^p)^{\frac{1}{p}}$. Rectangular Schatten classes are defined by the singular values, or equivalently by embedding a rectangular matrix into a larger square matrix with zero entries elsewhere. If $T \co S^p_I \to S^p_I$ acts on a Schatten class, its completely bounded norm is
\begin{equation}
\label{eq:cb-definition}
\norm{T}_{\cb,S^p_I \to S^p_I} 
= \sup_{n \geq 1}\bnorm{\Id_{S^p_n} \ot T}_{S^p_n(S^p_I) \to S^p_n(S^p_I)}.
\end{equation}
This agrees with the completely bounded norm for the canonical operator space structure, see \cite{Pis98}. We use $\norm{T}_{S^p_I \to S^p_I}$ for the ordinary norm. Norms of algebraically defined multipliers are assigned the value $\infty$ if no bounded extension exists. We record the following standard result. See \cite[Lemma 4.3]{ArK22} for the last part.

\begin{lemma}
\label{lem:finite-corners}
Let $1 \leq p < \infty$ and let $a \co I \times I \to \Cbb$ be a complex function.
\begin{enumerate}
\item The ordinary norm of its Schur multiplier is the supremum of the norms of its finite square corners. The analogous assertion holds at any fixed matrix level and for the completely bounded norm.
\item A finite rectangular restriction has norm at most that of the original multiplier, at every matrix level.
\item Suppose that $a_n(x,y) \to a(x,y)$ for any $x,y \in I$. If $\sup_n \norm{M_{a_n}}_{S^p_I \to S^p_I} \leq C$, then $\norm{M_a}_{S^p_I \to S^p_I} \leq C$. The same statement holds for completely bounded norms.
\end{enumerate}
\end{lemma}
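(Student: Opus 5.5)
The three assertions of Lemma~\ref{lem:finite-corners} are standard, and we treat them in order, writing $P_F$ for the coordinate projection onto $\ell^2_F$ when $F \subset I$.

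For the first part, fix $x \in S^p_I$ and an increasing sequence of finite sets $F_k$ exhausting $I$. Since $p<\infty$, the truncations $P_{F_k}xP_{F_k}$ converge to $x$ in $S^p_I$. The multiplier commutes with these compressions, $M_a(P_{F_k}xP_{F_k}) = P_{F_k}M_a(x)P_{F_k}$. Moreover $P_{F_k}xP_{F_k}$ is supported in the square corner $F_k \times F_k$, where $M_a$ acts as the finite multiplier $M_{a|_{F_k\times F_k}}$. Hence
\[
\norm{P_{F_k}M_a(x)P_{F_k}}_{S^p_I} \leq C\norm{x}_{S^p_I},
\]
with $C$ the supremum of the corner norms. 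Two reading conventions are needed here. If $M_a$ is only defined algebraically, we define $M_a x$ entrywise; since the compressions $P_{F_k}M_a(x)P_{F_k}$ are bounded in $S^p_I$ and converge entrywise, lower semicontinuity of the $S^p$ norm under weak operator convergence gives $\norm{M_a x}_{S^p_I} \leq C\norm{x}_{S^p_I}$. Conversely, each corner norm is at most $\norm{M_a}$, because a corner is the restriction of $M_a$ to a complemented subspace. At matrix level $n$, the same argument applies with $\Id_{S^p_n}\ot P_{F_k}$, since $\Id_{S^p_n} \ot M_a$ is again a Schur multiplier on $S^p(\ell^2_n \ot \ell^2_I)$. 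Taking the supremum over $n$ gives the completely bounded statement.

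For the second part, let $E,F \subset I$ be finite. A matrix $y$ supported on $E\times F$ is identified with $P_E y P_F \in S^p_I$, and this identification is isometric by the singular value description of rectangular Schatten norms. On such matrices $M_a$ coincides with the restricted multiplier. Therefore the restriction has norm at most $\norm{M_a}$, and the same holds after tensoring with $\Id_{S^p_n}$.

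For the third part, by the first part it suffices to treat a finite corner $F\times F$ at a fixed matrix level. There $\Id_{S^p_n} \ot M_{a_n|_{F \times F}}$ converges to $\Id_{S^p_n}\ot M_{a|_{F\times F}}$ as operators on a finite-dimensional space, because every entry converges. Operator norms are continuous on finite-dimensional spaces, so the uniform bound $C$ passes to the limit. Taking suprema over corners, and over $n$ for the completely bounded statement, concludes the proof.

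The only delicate point is in the first part: making sense of $M_a x$ when $M_a$ is merely algebraic, which is handled by the weak-limit and lower semicontinuity argument above. Everything else is bookkeeping.
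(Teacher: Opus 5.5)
Your proposal is correct. The paper gives no proof of this lemma: it records it as standard and refers to \cite[Lemma 4.3]{ArK22} for the third part, and your argument (compression by the coordinate projections $P_F$, the isometric identification of rectangular corners, and norm continuity of the multiplier on finite-dimensional corners) is the routine verification being alluded to.

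Two cosmetic points. In the third part you use $n$ both for the sequence index and for the matrix level. Under the paper's convention, $M_a$ is defined on finitely supported matrices and extended when bounded, so the lower-semicontinuity step in the first part is unnecessary, since every finitely supported matrix already lies in a finite corner.
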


For a discrete group $G$, we denote by $\lambda$ its left regular representation and equip $\VN(G)$ with its canonical trace $\tau(x) \ov{\mathrm{def}}{=} \la x\delta_e,\delta_e\ra$. We use the corresponding tracial spaces $\L^p(\VN(G))$. The completely bounded norm of a Fourier multiplier is obtained by taking the supremum of its extensions to $\L^p(\M_k \otvn \VN(G),\Tr\ot\tau)$. In particular, on the abelian group $\Z$ this is the norm of the extension to the Bochner space $\L^p(\T,S^p)$.

For $b \co \Z \to \mathbb{C}$, let $M_b(z^n) \ov{\mathrm{def}}{=} b(n)z^n$ and let $M_b^\HS$ be the Toeplitz Schur multiplier with symbol $b(i-j)$ on $S^p_\Z$. We shall use
\begin{equation}
\label{eq:Z-transference}
\norm{M_b}_{\L^p(\T) \to \L^p(\T)} 
\leq \norm{M_b^\HS}_{S^p_\Z \to S^p_\Z} 
\leq \norm{M_b^\HS}_{\cb,S^p_\Z \to S^p_\Z} 
= \norm{M_b}_{\cb,\L^p(\T) \to \L^p(\T)}.
\end{equation}
The first inequality is \cite[Theorem 2.1 p.~1167]{NeR11}, and the equality is \cite[Theorem 2.7 p.~1171]{NeR11}. The same Toeplitz norm is obtained by restricting both indices to $\N_0$. Indeed, any finite subset of $\Z$ can be translated into $\N_0$, without changing its differences.

Let $P_+ \co \L^p(\T) \to \L^p(\T)$ denote the Riesz projection with symbol $1_{\N_0}$. We put $
r_{\cb,p} \ov{\mathrm{def}}{=}
\norm{P_+}_{\cb,\L^p(\T) \to \L^p(\T)}$. According to \cite{HoV00}, we have
\begin{equation}
\label{norm-P+}
\norm{P_+}_{\L^p(\T) \to \L^p(\T)}
= \frac{1}{\sin(\frac{\pi}{p})}.
\end{equation}
For $j \in \Z$, let $U_jf(z) = z^jf(z)$, and let $Rf(z) = f(\overline z)$. These maps are complete isometries on $\L^p(\T)$, and
\[
M_{b(\cdot-j)} = U_jM_bU_j^{-1},
\quad
M_{b(-\cdot)} = RM_bR.
\]
Thus translation and reflection of a Fourier symbol preserve both its ordinary and completely bounded multiplier norms. We refer to \cite{Lar71} for more information on Fourier multipliers on locally compact abelian groups. 

We next recall the part of the tree multiplier theorem that will be used throughout. If $z \in \T$, we introduce
\begin{equation}
\label{eq:kappa-definition}
\Delta_z^X([a_{x,y}]) \ov{\mathrm{def}}{=} \big[z^{\dist_X(x,y)}a_{x,y}\big], \quad 
\kappa_p \ov{\mathrm{def}}{ = } \sup_X \sup_{z \in \T} \norm{\Delta_z^X}_{\cb, S^p_X \to S^p_X}
\end{equation}
where the first supremum runs over countable trees. Since $\Delta_1^X = \Id$, we have $\kappa_p \geq 1$. The bound $\kappa_p \lesssim p^*$ is the inequality labelled (Sph$_{\mathrm{T}}$) in \cite{GPPW26}. It is stated there for scalar matrices, but its proof relies on noncommutative martingale inequalities for the filtration of $\B(\ell^2_X)$ by block-diagonal conditional expectations, and these inequalities hold with the same constant for the amplified filtration of $\B(\ell^2_X) \ot \M_n$. Hence the same bound holds at every matrix level.

\begin{thm}
\label{thm-tree-input}
For $1 < p < \infty$, we have $1 \leq \kappa_p \lesssim p^*$. Moreover, for any countable tree $X$ and any complex function $m \co \N_0 \to \mathbb{C}$, we have
\begin{equation}
\label{eq:tree-upper}
\norm{M_{m \circ \dist_X}}_{\cb,S^p_X \to S^p_X} 
\leq \kappa_p^2\norm{M_{\widetilde m}}_{\cb,\L^p(\T) \to \L^p(\T)}.
\end{equation}
If $X$ has infinite diameter, then we have
\begin{equation}
\label{eq:tree-lower}
\norm{M_{\widetilde m}}_{\cb,\L^p(\T) \to \L^p(\T)} 
\leq \norm{M_{m \circ \dist_X}}_{\cb,S^p_X \to S^p_X},
\quad
\norm{M_{\widetilde m}}_{\L^p(\T) \to \L^p(\T)} 
\leq \norm{M_{m \circ \dist_X}}_{S^p_X \to S^p_X}.
\end{equation}
\end{thm}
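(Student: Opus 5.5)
The statement bundles four claims: the bound $\kappa_p \lesssim p^*$ with $\kappa_p \geq 1$, the upper estimate \eqref{eq:tree-upper}, and the two lower estimates \eqref{eq:tree-lower}. I would treat them separately.

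\emph{The bound on $\kappa_p$.} The inequality $\kappa_p \geq 1$ is immediate from $\Delta_1^X = \Id$. For $\kappa_p \lesssim p^*$, I would invoke (Sph$_{\mathrm{T}}$) of \cite{GPPW26} in its amplified form, as discussed just before the statement. Concretely, fix a root or end of $X$ and filter $\B(\ell^2_X)\ot\M_n$ by block-diagonal conditional expectations attached to the horocyclic or level structure. For $x,y$ with confluent $x\wedge y$, the distance $\dist_X(x,y)$ splits as a sum of two level differences. Then $\Delta_z^X$ factors as a product of a diagonal unitary conjugation and a martingale transform with unimodular coefficients. Noncommutative Burkholder--Gundy / Randrianantoanina inequalities bound that transform by $\lesssim p^*$ uniformly in $n$. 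I would take this step as given, since the paper explicitly relies on it.

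\emph{The upper estimate \eqref{eq:tree-upper}.} I would express $m\circ\dist_X$ through the phases $z^{\dist_X}$. By Lemma~\ref{lem:finite-corners}(1) it suffices to treat finite corners, so we may assume $\dist_X$ is bounded on the relevant set. Writing $\widetilde m$ formally as the Fourier series of a distribution $\mu$ on $\T$, we get $m(\dist_X(x,y)) = \int_\T z^{\dist_X(x,y)}\,\d\mu(z)$, and hence $M_{m\circ\dist_X} = \int \Delta_z^X \d\mu(z)$. This only yields the $\|\mu\|$ bound, which is too weak for general cb multipliers. Instead I would follow \cite{GPPW26}. Decompose $\dist_X(x,y) = h(x)+h(y)-2h(x\wedge y)$ using a horofunction $h$. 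The operator then becomes a composition: first a $\Delta$-type transference that conjugates $x$ by $\lambda$-shifts embedding $S^p_X$ into $\L^p(\T;S^p_X)$, then the Fourier multiplier $M_{\widetilde m}\ot\Id_{S^p}$ acting in the $\T$-variable, then the inverse transference. Each transference costs one factor $\kappa_p$, and the middle factor is exactly $\|M_{\widetilde m}\|_{\cb}$, which explains $\kappa_p^2$. The main obstacle is making this factorization rigorous, in particular justifying that the distance rather than a one-sided height appears after the two transferences. I would first try to handle it by treating the ``upward'' and ``downward'' parts of the geodesic as two separate phase operators $\Delta$.

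\emph{The lower estimates \eqref{eq:tree-lower}.} Infinite diameter of a tree gives geodesic paths $x_0,\dots,x_N$ of arbitrary length, with $\dist_X(x_i,x_j)=|i-j|$. Restricting to this rectangular corner gives the Toeplitz multiplier with symbol $\widetilde m(i-j)$ on $\{0,\dots,N\}$. By Lemma~\ref{lem:finite-corners}(2) its norm is at most that of $M_{m\circ\dist_X}$, at every matrix level. Taking $N\to\infty$ and using translation invariance (the remark after \eqref{eq:Z-transference}), the Toeplitz norm on $S^p_\Z$ is bounded accordingly. The ordinary and cb inequalities in \eqref{eq:Z-transference} then finish both cases.
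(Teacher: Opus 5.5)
Your treatment of $\kappa_p$ and of the two lower estimates \eqref{eq:tree-lower} matches the paper. For $\kappa_p$ you take $\kappa_p \geq 1$ from $\Delta_1^X=\Id$ and import $\kappa_p \lesssim p^*$ from (Sph$_{\mathrm{T}}$) of \cite{GPPW26}. For the lower estimates you compress to a geodesic segment to get a finite Toeplitz corner, then apply Lemma~\ref{lem:finite-corners} and \eqref{eq:Z-transference}.

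The gap is in the upper estimate \eqref{eq:tree-upper}. You announce a factorization ``transference, then $M_{\widetilde m}\ot\Id$, then inverse transference''. However, you never define the transference map and never check that it intertwines the two multipliers. You also leave unresolved, as the ``main obstacle'', the question of why the distance rather than a one-sided height appears, and propose horofunctions, $\lambda$-shifts and separate upward/downward phases to handle it. As written, \eqref{eq:tree-upper} is therefore not proved. The difficulty you point to is also misplaced: all the tree geometry (horofunctions, martingale inequalities) is already contained in the bound on $\kappa_p$, which you took as given.

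The missing idea is to use $\Delta_z^X$ itself as the transference. For finitely supported $a \in S^p_X$, put $J(a)(z) = \Delta_z^X(a)$. This is an $S^p_X$-valued trigonometric polynomial whose coefficient of $z^n$, $n \geq 0$, is the restriction of $a$ to the entries with $\dist_X(x,y)=n$. Since $\widetilde m(n)=m(n)$ for $n \geq 0$, the identity $J(M_{m\circ\dist_X}(a)) = (M_{\widetilde m}\ot\Id_{S^p_X})(J(a))$ is tautological.

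Since $\Delta_{\overline z}^X\Delta_z^X=\Id$, the definition of $\kappa_p$ gives $\kappa_p^{-1}\norm{b}_{S^p_X}\leq \norm{\Delta_z^X(b)}_{S^p_X}\leq\kappa_p\norm{b}_{S^p_X}$ for every $z\in\T$. Apply the left inequality to $b=M_{m\circ\dist_X}(a)$ and the right one to $b=a$, then integrate $p$-th powers over $\T$. This gives
\[
\norm{M_{m\circ\dist_X}(a)}_{S^p_X}
\leq \kappa_p\bnorm{(M_{\widetilde m}\ot\Id_{S^p_X})(J(a))}_{\L^p(\T,S^p_X)}
\leq \kappa_p^2\norm{M_{\widetilde m}}_{\cb,\L^p(\T)\to\L^p(\T)}\norm{a}_{S^p_X}.
\]
Because $\kappa_p$ is defined through completely bounded norms, the same computation works at every matrix level, and Lemma~\ref{lem:finite-corners} concludes. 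No decomposition of the geodesic is needed at this stage.
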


\begin{proof}
Let $a \in S^p_X$ be a matrix with finite support. The map $J(a) \co \T \to S^p_X$, $z \mapsto \Delta_z^X(a)$, is a trigonometric polynomial whose coefficient of $z^n$ is the restriction of $a$ to the entries $(x,y)$ with $\dist_X(x,y) = n$. Hence
\[
J\big(M_{m \circ \dist_X}(a)\big) 
= (M_{\widetilde m} \ot \Id_{S^p_X})\big(J(a)\big).
\]
Since $\Delta_{\overline z}^X\Delta_z^X = \Id$ for any $z \in \T$, we have $\norm{a}_{S^p_X} \leq \kappa_p\norm{J(a)(z)}_{S^p_X}$ and $\norm{J(a)(z)}_{S^p_X} \leq \kappa_p\norm{a}_{S^p_X}$. Integrating over $\T$, we obtain
\[
\norm{M_{m \circ \dist_X}(a)}_{S^p_X}
\leq \kappa_p\bnorm{(M_{\widetilde m} \ot \Id_{S^p_X})\big(J(a)\big)}_{\L^p(\T,S^p_X)}
\leq \kappa_p^2\norm{M_{\widetilde m}}_{\cb,\L^p(\T) \to \L^p(\T)}\norm{a}_{S^p_X}.
\]
The same argument applies at every matrix level, and Lemma~\ref{lem:finite-corners} gives \eqref{eq:tree-upper}. If $X$ has infinite diameter, it contains geodesic segments of any finite length, and the compression of $M_{m \circ \dist_X}$ to such a segment is a finite corner of the Toeplitz Schur multiplier $M_{\widetilde m}^\HS$. Hence \eqref{eq:tree-lower} follows from \eqref{eq:Z-transference} and Lemma~\ref{lem:finite-corners}.
\end{proof}

We refer to the books \cite{Die25} and \cite{Ser80} for more information on trees.

\section{Geometric multiplicity and automatic complete boundedness}
\label{sec:automatic}

We isolate the finite configuration used to recover matrix amplifications. This makes it possible to distinguish infinite degree from local finiteness.

\begin{defi}
\label{def:multiplicity}
Let $h \geq 1$ be an integer. We say that a tree $X$ satisfies the rectangular multiplicity condition at depth $h$ if for any integers $L,k \geq 1$ there exist pairwise distinct vertices
\[
x_{i,a},\ y_{j,b},\quad 0 \leq i,j \leq L,\quad 1 \leq a,b \leq k,
\]
such that
\begin{equation}
\label{eq:multiplicity-distance}
\dist_X(x_{i,a},y_{j,b}) = |i-j|+2h.
\end{equation}
\end{defi}

Only the distances between the two families are prescribed. The configurations for different $L$ and $k$ need not be compatible. The condition implies that $X$ has infinite diameter.

\begin{example} \normalfont
\label{X-infty-example}
Fix integers $L,k \geq 1$ and choose a bi-infinite geodesic $(v_n)_{n \in \Z}$ in $X_\infty$. For each $i \in \{0,\ldots,L\}$, choose $2k$ distinct neighbours of $v_i$ outside this geodesic, and label them $x_{i,1},\ldots,x_{i,k},y_{i,1},\ldots,y_{i,k}$. This is possible because $v_i$ has infinitely many neighbours, only two of which lie on the geodesic. All the chosen vertices are pairwise distinct: a vertex outside the geodesic cannot be adjacent to two different vertices of the geodesic, since this would create a cycle.

For $0 \leq i,j \leq L$ and $1 \leq a,b \leq k$, the unique path from $x_{i,a}$ to $y_{j,b}$ consists of the edge from $x_{i,a}$ to $v_i$, the geodesic segment from $v_i$ to $v_j$, and the edge from $v_j$ to $y_{j,b}$. Therefore, $\dist_{X_\infty}(x_{i,a},y_{j,b}) = 1 + |i-j| + 1 = |i-j| + 2$. When $i = j$, the middle segment has length zero, while $x_{i,a} \neq y_{i,b}$, so the distance is still $2$. Thus the tree $X_\infty$ satisfies Definition~\ref{def:multiplicity} with $h = 1$. See Figure~\ref{fig:rectangular-multiplicity}.
\end{example}

\begin{figure}[htbp]
\centering
\begin{tikzpicture}[
  x = 1cm, y = 1cm,
  vertex/.style = {circle, fill = black, inner sep = 1.65pt},
  edge/.style = {draw = black!45, line width = 0.45pt},
  long path/.style = {draw = black, line width = 1.1pt},
  short path/.style = {draw = black, line width = 1.1pt,
                       dash pattern = on 3pt off 2pt},
  every node/.style = {font = \small}
]
  \foreach \idx in {0,1,2} {
    \coordinate (v\idx) at ({4.1*\idx},0);
    \foreach \arm/\offset in {1/-1.05,2/0,3/1.05} {
      \coordinate (x\idx-\arm) at ({4.1*\idx+\offset},1.55);
      \coordinate (y\idx-\arm) at ({4.1*\idx+\offset},-1.55);
      \draw[edge] (x\idx-\arm) -- (v\idx) -- (y\idx-\arm);
    }
  }
  \draw[edge] (v0) -- (v1) -- (v2);
  \draw[edge,dashed] (-1.8,0) -- (v0);
  \draw[edge,dashed] (v2) -- (10,0);
  \node[anchor = east] at (-1.9,0) {$\cdots$};
  \node[anchor = west] at (10.1,0) {$\cdots$};

  \draw[long path] (x0-2) -- (v0) -- (v1) -- (v2) -- (y2-2);
  \draw[short path] (x1-2) -- (v1) -- (y1-2);

  \foreach \idx in {0,1,2} {
    \node[vertex] at (v\idx) {};
    \node[anchor = south west] at ({4.1*\idx+0.16},0.10)
      {$v_{\idx}$};
    \foreach \arm in {1,2,3} {
      \node[vertex] at (x\idx-\arm) {};
      \node[anchor = south] at ({4.1*\idx+(\arm-2)*1.05},1.68)
        {$x_{\idx,\arm}$};
      \node[vertex] at (y\idx-\arm) {};
      \node[anchor = north] at ({4.1*\idx+(\arm-2)*1.05},-1.68)
        {$y_{\idx,\arm}$};
    }
  }
  \node[anchor = north] at (2.05,-0.12) {$1$};
  \node[anchor = north] at (6.15,-0.12) {$1$};

  \draw[long path] (-0.85,-2.52) -- (-0.18,-2.52);
  \node[anchor = west] at (-0.05,-2.52)
    {$\dist_{X_\infty}(x_{0,2},y_{2,2}) = 1 + 2 + 1 = 4$};
  \draw[short path] (5.90,-2.52) -- (6.57,-2.52);
  \node[anchor = west] at (6.70,-2.52)
    {$\dist_{X_\infty}(x_{1,2},y_{1,2}) = 2$};
\end{tikzpicture}
\caption{The finite configuration for $L = 2$ and $k = 3$. Every edge has graph length $1$; unselected vertices and branches of $X_\infty$ are omitted. The solid thick path joins $x_{0,2}$ to $y_{2,2}$ and has length $4$. The dashed thick path joins $x_{1,2}$ to $y_{1,2}$ through $v_1$ and has length $2$.}
\label{fig:rectangular-multiplicity}
\end{figure}

\begin{example} \normalfont
\label{ex:locally-finite}
There exists a locally finite tree $X$ satisfying Definition~\ref{def:multiplicity} with $h = 1$. Start with a ray $v_0,v_1,\ldots$. For $n \geq 1$, put $a_n \ov{\mathrm{def}}{=}\frac{n(n-1)}{2}$ and $I_n \ov{\mathrm{def}}{=} n(n-1) \{v_{a_n},\ldots,v_{a_n + n - 1}\}$. These sets partition the ray into consecutive blocks of $n$ vertices. Attach $2n$ new leaves to each vertex in $I_n$. Each attached leaf has degree $1$, and each vertex in $I_n$ has degree at most $2n + 2$. Thus $X$ is locally finite.

Given integers $L,k \geq 1$, choose $n \geq \max\{L + 1,k\}$ and put $w_i \ov{\mathrm{def}}{=} v_{a_n + i}$ for $0 \leq i \leq L$. At each $w_i$, choose $2k$ distinct attached leaves and label them $x_{i,1},\ldots,x_{i,k},y_{i,1},\ldots,y_{i,k}$. All these leaves are pairwise distinct. The unique path from $x_{i,a}$ to $y_{j,b}$ consists of the edge from $x_{i,a}$ to $w_i$, the segment of the ray from $w_i$ to $w_j$, and the edge from $w_j$ to $y_{j,b}$. Consequently, $\dist_X(x_{i,a},y_{j,b}) = 1 + |i-j| + 1 = |i-j| + 2$, including when $i = j$. This proves the required multiplicity condition; see Figure~\ref{fig:locally-finite-multiplicity}.
\end{example}

\begin{figure}[htbp]
\centering
\begin{tikzpicture}[
  x = 1cm, y = 1cm,
  vertex/.style = {circle, fill = black, inner sep = 1.55pt},
  edge/.style = {draw = black!45, line width = 0.45pt},
  long path/.style = {draw = black, line width = 1.1pt},
  short path/.style = {draw = black, line width = 1.1pt,
                       dash pattern = on 3pt off 2pt},
  block/.style = {decorate,
                 decoration = {brace, mirror, amplitude = 4pt}},
  every node/.style = {font = \small}
]
  \foreach \idx/\pos/\size in {
    0/0/1,1/1.8/2,2/3.6/2,3/6.1/3,4/8.55/3,5/11/3
  } {
    \coordinate (v\idx) at (\pos,0);
    \foreach \arm in {1,...,\size} {
      \pgfmathsetmacro{\offset}{0.68*(\arm-(\size+1)/2)}
      \coordinate (u\idx-\arm) at ({\pos+\offset},1.15);
      \coordinate (z\idx-\arm) at ({\pos+\offset},-1.15);
      \draw[edge] (u\idx-\arm) -- (v\idx) -- (z\idx-\arm);
    }
  }

  \draw[edge] (v0) -- (v1) -- (v2) -- (v3) -- (v4) -- (v5);
  \draw[edge,dashed] (v5) -- (12.35,0);
  \node[anchor = west] at (12.45,0) {$\cdots$};

  \draw[long path] (u3-2) -- (v3) -- (v4) -- (v5) -- (z5-2);
  \draw[short path] (u4-2) -- (v4) -- (z4-2);

  \foreach \idx/\pos/\size in {
    0/0/1,1/1.8/2,2/3.6/2,3/6.1/3,4/8.55/3,5/11/3
  } {
    \node[vertex] at (v\idx) {};
    \node[anchor = south west] at ({\pos+0.12},0.08)
      {$v_{\idx}$};
    \foreach \arm in {1,...,\size} {
      \node[vertex] at (u\idx-\arm) {};
      \node[vertex] at (z\idx-\arm) {};
    }
  }

  \foreach \idx/\pos in {0/6.1,1/8.55,2/11} {
    \foreach \arm in {1,2,3} {
      \node[anchor = south] at ({\pos+0.68*(\arm-2)},1.27)
        {$x_{\idx,\arm}$};
      \node[anchor = north] at ({\pos+0.68*(\arm-2)},-1.27)
        {$y_{\idx,\arm}$};
    }
  }

  \node[anchor = north] at (7.325,-0.12) {$1$};
  \node[anchor = north] at (9.775,-0.12) {$1$};

  \draw[block] (-0.55,-1.92) -- (0.55,-1.92)
    node[midway,below = 7pt] {$I_1$};
  \draw[block] (1.20,-1.92) -- (4.20,-1.92)
    node[midway,below = 7pt] {$I_2$};
  \draw[block] (5.20,-1.92) -- (11.90,-1.92)
    node[midway,below = 7pt] {$I_3$: $L = 2$, $k = 3$};

  \draw[long path] (0.20,-2.85) -- (0.85,-2.85);
  \node[anchor = west] at (0.98,-2.85)
    {$\dist_X(x_{0,2},y_{2,2}) = 4$};
  \draw[short path] (7.05,-2.85) -- (7.70,-2.85);
  \node[anchor = west] at (7.83,-2.85)
    {$\dist_X(x_{1,2},y_{1,2}) = 2$};
\end{tikzpicture}
\caption{The first three blocks of the locally finite tree $X$. Each vertex in $I_n$ has $2n$ attached leaves. Within $I_3$, we choose $L = 2$ and $k = 3$, with $x_{i,a}$ and $y_{i,b}$ attached to $v_{i+3}$. Here, every edge has graph length $1$. The solid thick path has length $4$, while the dashed thick path through $v_4$ has length $2$. The remaining blocks are omitted.}
\label{fig:locally-finite-multiplicity}
\end{figure}

\begin{lemma}
\label{lem:shifted-detection}
Suppose that $1 < p < \infty$. Fix an integer $h \geq 1$. Assume that the tree $X$ satisfies the rectangular multiplicity condition at depth $h$. Let $m \co \N_0 \to \mathbb{C}$ be a complex function and suppose that the matrix $m \circ \dist_X$ induces a bounded Schur multiplier $M_{m \circ \dist_X} \co S^p_X \to S^p_X$. Define the complex function $b_h \co \Z \to \mathbb{C}$ by
\begin{equation}
\label{eq:shifted-symbol}
b_h(n) 
\ov{\mathrm{def}}{=} m(|n|+2h),\quad n \in \Z.
\end{equation}
Then the function $b_h$ induces a completely bounded Fourier multiplier $M_{b_h} \co \L^p(\T) \to \L^p(\T)$ and
\begin{equation}
\label{eq:shifted-bound}
\norm{M_{b_h}}_{\cb,\L^p(\T) \to \L^p(\T)} 
\leq \norm{M_{m \circ \dist_X}}_{S^p_X \to S^p_X}.
\end{equation}
\end{lemma}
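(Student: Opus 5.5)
Fix integers $L,k \geq 1$ and choose the vertices $x_{i,a}$, $y_{j,b}$ given by Definition~\ref{def:multiplicity}. Consider the rectangular restriction of $M_{m \circ \dist_X}$ to rows $\{x_{i,a}\}$ and columns $\{y_{j,b}\}$. By \eqref{eq:multiplicity-distance}, its symbol is
\[
(x_{i,a},y_{j,b}) \mapsto m(|i-j|+2h) = b_h(i-j),
\]
independently of $a,b$. Identify $\ell^2$ of the row family with $\ell^2_{\{0,\ldots,L\}} \ot \ell^2_k$, and similarly for the columns. The rectangular corner of $S^p_X$ spanned by these rows and columns is then isometrically $S^p(\ell^2_{L+1}\ot\ell^2_k) = S^p_{L+1}(S^p_k)$, with the same Schatten norm. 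Since all the vertices are pairwise distinct, this is a genuine rectangular block. On it, the restricted multiplier acts as $M_{b_h|_{[0,L]^2}}^{\HS} \ot \Id_{S^p_k}$, where $M_{b_h|_{[0,L]^2}}^{\HS}$ denotes the finite Toeplitz Schur multiplier with symbol $b_h(i-j)$, $0 \leq i,j \leq L$.

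By part 2 of Lemma~\ref{lem:finite-corners}, applied at matrix level one (the \emph{ordinary} norm), we get
\[
\bnorm{\Id_{S^p_k}\ot M_{b_h|_{[0,L]^2}}^{\HS}}_{S^p_k(S^p_{L+1}) \to S^p_k(S^p_{L+1})} \leq \norm{M_{m\circ\dist_X}}_{S^p_X\to S^p_X}.
\]
This is the key point. The multiplicity index $a,b$ supplies the matrix amplification for free, so only the ordinary norm of the tree multiplier is used. Taking the supremum over $k$ via \eqref{eq:cb-definition} bounds the cb norm of the finite Toeplitz corner on $\{0,\ldots,L\}$. Taking the supremum over $L$, using part 1 of Lemma~\ref{lem:finite-corners} and the remark after \eqref{eq:Z-transference} (finite subsets of $\Z$ may be translated into $\N_0$, hence into some $\{0,\ldots,L\}$), we obtain
\[
\norm{M_{b_h}^\HS}_{\cb,S^p_\Z\to S^p_\Z} \leq \norm{M_{m\circ\dist_X}}_{S^p_X\to S^p_X}.
\]
The equality in \eqref{eq:Z-transference} then gives \eqref{eq:shifted-bound}, together with the complete boundedness of $M_{b_h}$.

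The step needing the most care is the identification of the corner with $S^p_{L+1}(S^p_k)$ and of the multiplier with the tensor amplification. One has to check that the ordering of the indices (pairs $(i,a)$ for rows, $(j,b)$ for columns) is a unitary relabeling, which preserves Schatten norms. One also has to check that a rectangular compression is computed in the rectangular Schatten class as described in the preliminaries. The distinctness of all $x$'s and $y$'s guarantees that the row and column index sets are injectively embedded in $X$, so the compression is a well-defined restriction. Once this is checked, the rest is bookkeeping with Lemma~\ref{lem:finite-corners}.
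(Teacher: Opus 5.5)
Your proposal is correct and follows essentially the same route as the paper. You embed $S^p_{L+1}(S^p_k)$ as the rectangular corner indexed by the families $x_{i,a}$ and $y_{j,b}$, so that the index pair $(a,b)$ supplies the $k$-th amplification of the finite Toeplitz multiplier with symbol $b_h(i-j)$ at the cost of only the ordinary tree norm, and you then pass to $\Z$ by translation, Lemma~\ref{lem:finite-corners} and \eqref{eq:Z-transference}. The only difference is that the paper proves the isometric identification explicitly, via $J(A)=UAV^*$ and $J(A)^*J(A)=VA^*AV^*$, whereas you invoke part 2 of Lemma~\ref{lem:finite-corners} for it.
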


\begin{proof}
Put $
C \ov{\mathrm{def}}{=}
\norm{M_{m \circ \dist_X}}_{S^p_X \to S^p_X}$. Fix integers $L,k \geq 1$, and choose pairwise distinct vertices $x_{i,a}$ and $y_{j,b}$ satisfying \eqref{eq:multiplicity-distance}. Consider the complex Hilbert space $
H_{L,k}
\ov{\mathrm{def}}{=}
\ell^2_{\{0,\ldots,L\}} \ot \mathbb{C}^k$, with its standard orthonormal basis $(e_i \ot e_a)_{0 \leq i \leq L, 1 \leq a \leq k}$. A block matrix $A = [A_{ij}]_{i,j = 0}^L$, with $A_{ij} \in \M_k$, acts on the space $H_{L,k}$ by
\[
A(e_j \ot e_b)
=
\sum_{i = 0}^L \sum_{a = 1}^k [A_{ij}]_{ab} e_i \ot e_a.
\]
Write $(e_z)_{z \in X}$ for the standard orthonormal basis of the Hilbert space $\ell^2_X$. Define two isometries $U,V \co H_{L,k} \to \ell^2_X$ by
\[
U(e_i \ot e_a) 
\ov{\mathrm{def}}{=} e_{x_{i,a}}
\quad \text{and} \quad
V(e_j \ot e_b) 
\ov{\mathrm{def}}{=} e_{y_{j,b}}.
\]
They are isometries because the selected vertices are pairwise distinct. Their ranges are the subspaces
\[
\mathcal H_x
= \Span\{e_{x_{i,a}}\}
\quad \text{and} \quad
\mathcal H_y
= \Span \{e_{y_{j,b}}\}.
\]
Associate to $A$ the finite-rank operator $J(A) \ov{\mathrm{def}}{=} UAV^*$ on the Hilbert space $\ell^2_X$. This operator maps $\mathcal H_y$ into $\mathcal H_x$ and vanishes on $\mathcal H_y^\perp$. More explicitly, we have
\[
J(A)e_{y_{j,b}}
= \sum_{i = 0}^L \sum_{a = 1}^k (A_{ij})_{ab} e_{x_{i,a}}.
\]
Thus its matrix coefficient at $(x_{i,a},y_{j,b})$ is $[A_{ij}]_{ab}$, and all coefficients outside this rectangular corner are zero. Since $U^*U = \Id_{H_{L,k}}$, we have
\[
J(A)^*J(A) 
= VA^*AV^*.
\]
The operator on the right is unitarily equivalent to $A^*A$ on $\mathcal H_y$ and is zero on
$\mathcal H_y^\perp$. Consequently, $J(A)$ and $A$ have the same nonzero singular values, with the same multiplicities. Under the canonical identification $S^p(H_{L,k}) = S^p_{L+1}(S^p_k)$, this gives
\[
\norm{J(A)}_{S^p_X}
=
\bnorm{[A_{ij}]_{i,j = 0}^L}_{S^p_{L+1}(S^p_k)}.
\]
Now, we compute the action of the Schur multiplier. For any $i,j,a,b$, its defining formula gives
\begin{align*}
\MoveEqLeft
\bigl(M_{m \circ \dist_X}(J(A))\bigr)_{x_{i,a},y_{j,b}}
=
m\bigl(\dist_X(x_{i,a},y_{j,b})\bigr)[A_{ij}]_{ab}
\\
&\ov{\eqref{eq:multiplicity-distance}}{=}
m(|i-j|+2h)[A_{ij}]_{ab}
\ov{\eqref{eq:shifted-symbol}}{=}
b_h(i-j)[A_{ij}]_{ab}.
\end{align*}
All coefficients outside the selected rectangular
corner remain zero. Hence the preceding coefficient
identities prove the operator identity
\[
M_{m \circ \dist_X}(J(A))
= J\bigl([b_h(i-j)A_{ij}]_{i,j = 0}^L\bigr).
\]
Applying the norm identity for $J$ and the boundedness of $M_{m \circ \dist_X}$, we obtain
\begin{align}
\label{eq:amplified-corner}
\MoveEqLeft
\bnorm{[b_h(i-j)A_{ij}]_{i,j = 0}^L}_{S^p_{L+1}(S^p_k)}
=
\norm{M_{m \circ \dist_X}(J(A))}_{S^p_X}
\\
&\leq C\norm{J(A)}_{S^p_X}
= C\bnorm{[A_{ij}]_{i,j = 0}^L}_{S^p_{L+1}(S^p_k)}. \nonumber
\end{align}
The map in \eqref{eq:amplified-corner} is the $k$-th matrix amplification of the Toeplitz Schur multiplier on $\{0,\ldots,L\}$, under the canonical identification of the tensor factors. Its norm is therefore at most $C$, independently of $L$ and $k$.

Any finite subset of $\Z$ can be translated into $\{0,\ldots,L\}$ for some $L$. Such a translation preserves all differences $i-j$, and extending a matrix by zero preserves its Schatten norm. Thus \eqref{eq:amplified-corner} bounds every finite corner of the Toeplitz Schur multiplier at every
matrix level. Lemma~\ref{lem:finite-corners} gives
\[
\norm{M_{b_h}^\HS}_{\cb,S^p_\Z \to S^p_\Z} 
\leq C.
\]
Finally, \eqref{eq:Z-transference} yields a completely bounded Fourier multiplier $M_{b_h}$ satisfying
\[
\norm{M_{b_h}}_{\cb,\L^p(\T) \to \L^p(\T)}
\ov{\eqref{eq:Z-transference}}{=}
\norm{M_{b_h}^\HS}_{\cb,S^p_\Z \to S^p_\Z}
\leq C.
\]
\end{proof}

The shift $2h$ can be removed as the following result shows.

\begin{prop}
\label{prop:automatic-general}
Suppose that $1 < p < \infty$. Fix an integer $h \geq 1$. Assume that the tree $X$ satisfies the rectangular multiplicity condition at depth $h$. Let $m \co \N_0 \to \mathbb{C}$ be a complex function. If the function $m \circ \dist_X$ induces a bounded Schur multiplier $M_{m \circ \dist_X} \co S^p_X \to S^p_X$ then
\begin{equation}
\label{eq:general-automatic}
\norm{M_{\widetilde m}}_{\cb,\L^p(\T) \to \L^p(\T)} 
\leq (2r_{\cb,p}+\sqrt{4h-1})\norm{M_{m \circ \dist_X}}_{S^p_X \to S^p_X}.
\end{equation}
Moreover, the Schur multiplier $M_{m \circ \dist_X} \co S^p_X \to S^p_X$ is completely bounded and
\begin{equation}
\label{eq:general-automatic-schur}
\norm{M_{m \circ \dist_X}}_{\cb,S^p_X \to S^p_X} 
\leq \kappa_p^2(2r_{\cb,p}+\sqrt{4h-1}) \norm{M_{m \circ \dist_X}}_{S^p_X \to S^p_X}.
\end{equation}
\end{prop}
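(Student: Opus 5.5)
The plan is to write the even symbol $\widetilde m$ as two shifted, Riesz-truncated copies of the shifted symbol $b_h$ of Lemma~\ref{lem:shifted-detection}, plus a finitely supported remainder. Put $C \ov{\mathrm{def}}{=} \norm{M_{m \circ \dist_X}}_{S^p_X \to S^p_X}$. Lemma~\ref{lem:shifted-detection} gives $\norm{M_{b_h}}_{\cb,\L^p(\T)\to\L^p(\T)} \leq C$, where $b_h(n) = m(|n|+2h)$. For $n \geq 2h$ we have $b_h(n-2h) = m(n-2h+2h) = \widetilde m(n)$. For $n \leq -2h$ we have $b_h(n+2h) = m(-n-2h+2h) = \widetilde m(n)$. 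Hence
\[
\widetilde m
= 1_{[2h,\infty)}\, b_h(\cdot-2h) + 1_{(-\infty,-2h]}\, b_h(\cdot+2h) + 1_{\{|n|<2h\}}\,\widetilde m .
\]

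For the first two terms, I would use the translation and reflection invariance recorded in Section~\ref{sec:preliminaries}. The multiplier with symbol $1_{[2h,\infty)}$ is $U_{2h}P_+U_{2h}^{-1}$, and the one with symbol $1_{(-\infty,-2h]}$ is $RU_{2h}P_+U_{2h}^{-1}R$. Both therefore have cb norm $r_{\cb,p}$. The multipliers with symbols $b_h(\cdot\mp 2h)$ have cb norm at most $C$. Composing multipliers multiplies symbols, so each of the two terms has cb norm at most $r_{\cb,p}C$.

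For the remainder $c(n) \ov{\mathrm{def}}{=} \widetilde m(n)1_{\{|n|<2h\}}$, the steps are as follows.
\begin{enumerate}
\item Bound the coefficients. Every distance $d<2h$ is realized in $X$, since $X$ has infinite diameter. Testing $M_{m\circ\dist_X}$ on a matrix unit $e_{x,y}$ with $\dist_X(x,y)=d$ gives $|m(d)|\leq C$. Hence $\sum_n|c(n)|^2 \leq (4h-1)C^2$.
\item Factor the Toeplitz symbol. In $\ell^2_\Z$, set $u_i \ov{\mathrm{def}}{=} \sum_n c(n)e_{i-n}$ and $v_j \ov{\mathrm{def}}{=} e_j$, so that $c(i-j)=\la u_i,v_j\ra$.
\item Apply Grothendieck's factorization criterion. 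It gives that the Toeplitz Schur multiplier $M_c^{\HS}$ is completely bounded on $\B(\ell^2_\Z)$ with cb norm at most $\sup_i\norm{u_i}\sup_j\norm{v_j}\leq\sqrt{4h-1}\,C$. By duality the same bound holds on $S^1_\Z$, since the adjoint multiplier has symbol $\overline{c(j-i)}$, which factors in the same way.
\item Interpolate between $S^1$ and $S^\infty$ at every matrix level. This gives $\norm{M_c^\HS}_{\cb,S^p_\Z\to S^p_\Z}\leq\sqrt{4h-1}\,C$.
\item Transfer back. The equality in \eqref{eq:Z-transference} converts this into $\norm{M_c}_{\cb,\L^p(\T)\to\L^p(\T)}\leq\sqrt{4h-1}\,C$.
\end{enumerate}

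Summing the three pieces by the triangle inequality gives \eqref{eq:general-automatic}. The complete boundedness estimate \eqref{eq:general-automatic-schur} then follows by inserting \eqref{eq:general-automatic} into \eqref{eq:tree-upper} of Theorem~\ref{thm-tree-input}.

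The only nontrivial point is obtaining the constant $\sqrt{4h-1}$ rather than $4h-1$ for the finite remainder; the naive bound $\sum_n|c(n)|$ would only give $4h-1$. The Grothendieck factorization at $p=\infty$, followed by duality and interpolation, is the step I expect to need the most care. The rest is bookkeeping with translations and the Riesz projection.
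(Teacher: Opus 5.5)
Your proof is correct. It agrees with the paper's proof in every step except the bound for the finite remainder $c=\widetilde m\,1_{\{|n|<2h\}}$. The decomposition \eqref{eq:unshift}, the treatment of the two truncated shifted terms via $U_{2h}$, $R$ and $P_+$, the matrix-unit bound $\norm{m}_{\ell^\infty}\leq C$, and the final appeal to \eqref{eq:tree-upper} are all the same.

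For the remainder, the paper stays on the Fourier side. The multiplier $M_c$ is convolution with the trigonometric polynomial $K_h(z)=\sum_{|j|<2h}m(|j|)z^{-j}$. It is therefore a $K_h$-weighted average of rotations, which are complete isometries of $\L^p(\T)$. Hence
\[
\norm{M_c}_{\cb,\L^p(\T)\to\L^p(\T)}\leq\norm{K_h}_{\L^1(\T)}\leq\norm{K_h}_{\L^2(\T)}=\norm{c}_{\ell^2},
\]
using Cauchy--Schwarz on the probability space $\T$ and Parseval. This gives $\sqrt{4h-1}$ rather than $4h-1$ in one line, with no duality, interpolation or transference.

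Your route is also valid, and the duality step is harmless. It uses the Gram factorization $c(i-j)=\la u_i,v_j\ra$ at $p=\infty$, then duality to $S^1$, interpolation, and the Neuwirth--Ricard equality in \eqref{eq:Z-transference}. The transposed or conjugated Toeplitz symbol factors with the same vector norms, and amplification by $\M_n$ preserves the factorization, so the bound holds at every matrix level. However, it brings in heavier machinery for what is essentially a scalar $\L^1$--$\L^2$ estimate.

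Your approach stays on the Schur side and applies to any symbol with a bounded Gram factorization, not only Toeplitz ones. The paper's intermediate bound $\norm{K_h}_{\L^1(\T)}$ is sharper than the $\ell^2$ bound your particular factorization produces, although both yield the stated constant.
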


\begin{proof}
Put $C \ov{\mathrm{def}}{=} \norm{M_{m \circ \dist_X}}_{S^p_X \to S^p_X}$. Since $X$ is a connected tree of infinite diameter, every nonnegative integer occurs as a distance. Testing the Schur multiplier on the matrix units $e_{xy}$, which have norm one in $S^p_X$, we obtain
\begin{equation}
\label{inter-34}
\norm{m}_{\ell^\infty_{\mathbb{N}_0}}
= \sup_{x,y \in X}\big|m(\dist_X(x,y))\big|
\leq C.
\end{equation}
With the function $b_h$ as in \eqref{eq:shifted-symbol}, we have the identity
\begin{align}
\label{eq:unshift}
\widetilde{m}(n)
=m(|n|)
=1_{\{n \leq -2h\}}b_h(n+2h)
+m(|n|)1_{\{|n|<2h\}}+1_{\{n \geq 2h\}} b_h(n-2h),\quad n \in \Z.
\end{align}
Let $c_h(n) \ov{\mathrm{def}}{=} m(|n|)1_{\{|n|<2h\}}$ and put $K_h(z) \ov{\mathrm{def}}{=} \sum_{|j|<2h}m(|j|)z^{-j}$. Since we have $c_h(n) = \int_\T z^nK_h(z)\d\mu_\T(z)$ for any integer $n$, integration against $K_h\mu_\T$ gives
\[
\norm{M_{c_h}}_{\cb,\L^p(\T) \to \L^p(\T)}
\leq \norm{K_h}_{\L^1(\T)}
\leq \norm{K_h}_{\L^2(\T)}
= \bigg(\sum_{|j|<2h}|m(|j|)|^2\bigg)^{\frac{1}{2}}
\leq \sqrt{4h-1}C.
\]
The first and the last terms induce Fourier multipliers having completely bounded norms at most $r_{\cb,p}\norm{M_{b_h}}_{\cb,\L^p(\T) \to \L^p(\T)}$, by the translation and reflection properties recalled in Section \ref{sec:preliminaries}. Hence
\begin{align}
\MoveEqLeft
\label{inter-466}
\norm{M_{\widetilde m}}_{\cb,\L^p(\T) \to \L^p(\T)} 
\ov{\eqref{eq:unshift}}{\leq} \bnorm{M_{n \mapsto 1_{\{n \geq 2h\}}b_h(n-2h)}}_{\cb,\L^p(\T) \to \L^p(\T)}+\bnorm{M_{c_h}}_{\cb,\L^p(\T) \to \L^p(\T)} \\
&+\bnorm{M_{n \mapsto 1_{\{n \leq -2h\}}b_h(n+2h)}}_{\cb,\L^p(\T) \to \L^p(\T)} \nonumber\\
&\ov{\eqref{inter-34}}{\leq} 2r_{\cb,p}\norm{M_{b_h}}_{\cb,\L^p(\T) \to \L^p(\T)}+\sqrt{4h-1}C
\ov{\eqref{eq:shifted-bound}}{ \leq }(2r_{\cb,p}+\sqrt{4h-1})C. \nonumber        
\end{align}
This proves \eqref{eq:general-automatic}. Finally, using Theorem \ref{thm-tree-input} we obtain
\begin{align*}
\MoveEqLeft
\norm{M_{m \circ \dist_X}}_{\cb,S^p_X \to S^p_X}          
\ov{\eqref{eq:tree-upper}}{\leq} \kappa_p^2\norm{M_{\widetilde m}}_{\cb,\L^p(\T) \to \L^p(\T)} \\
&\ov{\eqref{inter-466}}{\leq} \kappa_p^2(2r_{\cb,p}+\sqrt{4h-1}) \norm{M_{m \circ \dist_X}}_{S^p_X \to S^p_X}.
\end{align*}
This proves \eqref{eq:general-automatic-schur}.
\end{proof}

Recall that $X_\infty$ denotes the countable homogeneous tree in which each vertex has countably infinitely many neighbours.

\begin{cor}
\label{cor-automatic-bis}
Let $1 < p < \infty$ and let $m \co \N_0 \to \Cbb$ be a complex function. The following assertions are equivalent:
\begin{enumerate}
\item the Schur multiplier $M_{m \circ \dist_{X_\infty}}$ is bounded on the Schatten space $S^p_{X_\infty}$,
\item the Schur multiplier $M_{m \circ \dist_{X_\infty}}$ is completely bounded on this space,
\item the Fourier multiplier $M_{\widetilde m}$ is completely bounded on $\L^p(\T)$.
\end{enumerate}
Moreover, if the first assertion holds, then
\begin{equation}
\label{eq:automatic-infinity-estimate}
\norm{M_{\widetilde m}}_{\cb,\L^p(\T) \to \L^p(\T)} 
\leq (2r_{\cb,p}+\sqrt{3}) \norm{M_{m \circ \dist_{X_\infty}}}_{S^p_{X_\infty} \to S^p_{X_\infty}}.
\end{equation}
\end{cor}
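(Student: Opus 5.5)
The plan is to specialize Proposition~\ref{prop:automatic-general} to $X = X_\infty$ with $h = 1$, and to close the cycle of implications with Theorem~\ref{thm-tree-input}. By Example~\ref{X-infty-example}, the tree $X_\infty$ satisfies the rectangular multiplicity condition of Definition~\ref{def:multiplicity} at depth $h = 1$. With this value of $h$ we have $\sqrt{4h-1} = \sqrt{3}$. Hence the estimate \eqref{eq:automatic-infinity-estimate} is literally \eqref{eq:general-automatic}, valid whenever the first assertion holds.

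For the equivalences, I will prove (1) $\Rightarrow$ (3) $\Rightarrow$ (2) $\Rightarrow$ (1). The implication (1) $\Rightarrow$ (3) is exactly \eqref{eq:general-automatic}. Its right-hand side is finite under (1), and $r_{\cb,p} < \infty$ for $1 < p < \infty$ by the completely bounded boundedness of the Riesz projection on $\L^p(\T,S^p)$, a UMD space.

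The implication (3) $\Rightarrow$ (2) is \eqref{eq:tree-upper} of Theorem~\ref{thm-tree-input}, applied to the countable tree $X_\infty$ and using $\kappa_p \lesssim p^* < \infty$. The implication (2) $\Rightarrow$ (1) is trivial, since the ordinary norm is the $n = 1$ level in \eqref{eq:cb-definition} and is therefore dominated by the completely bounded norm.

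There is no real obstacle here: everything is already contained in Lemma~\ref{lem:shifted-detection} and Proposition~\ref{prop:automatic-general}. The only point that needs care is to state that the finiteness of $r_{\cb,p}$ is the input that makes (1) $\Rightarrow$ (3) meaningful. I will also note that Theorem~\ref{thm:intro-automatic} follows directly, because combining \eqref{eq:general-automatic-schur} with $\kappa_p \lesssim p^*$ and $r_{\cb,p} \lesssim p^*$ gives \eqref{eq:intro-automatic-schur}.
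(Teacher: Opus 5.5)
Your proposal is correct and is essentially the paper's proof: Example~\ref{X-infty-example} gives the multiplicity condition with $h=1$, Proposition~\ref{prop:automatic-general} then yields (1)$\Rightarrow$(3) together with \eqref{eq:automatic-infinity-estimate}, and Theorem~\ref{thm-tree-input} supplies the remaining implications. Your UMD justification of $r_{\cb,p}<\infty$ is fine; the paper instead records $r_{\cb,p}\leq \frac{\kappa_p+1}{2}\lesssim p^*$ at the end of the proof of Proposition~\ref{prop:cosine-lower}.
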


\begin{proof}
By Example \ref{X-infty-example}, the tree $X_\infty$ satisfies Definition~\ref{def:multiplicity} with $h = 1$. Proposition~\ref{prop:automatic-general} proves the implication from ordinary boundedness to complete boundedness of the Fourier multiplier $M_{\widetilde m}$, with \eqref{eq:automatic-infinity-estimate}. Theorem~\ref{thm-tree-input} gives the remaining implications. Finally, \eqref{eq:general-automatic-schur} with $h = 1$, together with $\kappa_p \lesssim p^*$ and $r_{\cb,p} \lesssim p^*$, see the proof of Proposition~\ref{prop:cosine-lower}, gives \eqref{eq:intro-automatic-schur}. This proves Theorem~\ref{thm:intro-automatic}.
\end{proof}

For an integer $q \geq 2$, let $X_q$ be the homogeneous tree of degree $q+1$, see Figure \ref{fig:homogeneous-tree}. The preceding argument does not apply to an individual $X_q$, as the following observation shows.

\begin{figure}[htbp]
\centering
\def\q{2}
\begin{tikzpicture}[
  vertex/.style={circle,fill=black,inner sep=0pt,minimum size=3pt},
  edge/.style={line width=0.6pt},
  continuation/.style={edge,densely dashed},
  line cap=round
]
\coordinate (O) at (0,0);

\foreach \i in {0,...,\q} {
  \pgfmathsetmacro{\a}{90+360*\i/(\q+1)}
  \coordinate (A\i) at (\a:1.15);
  \draw[edge] (O) -- (A\i);
  \node[vertex] at (A\i) {};

  \foreach \j in {1,...,\q} {
    \pgfmathsetmacro{\b}{\a+360/(\q+1)*((\j-0.5)/\q-0.5)}
    \coordinate (B\i-\j) at (\b:2.35);
    \draw[edge] (A\i) -- (B\i-\j);
    \node[vertex] at (B\i-\j) {};

    \foreach \k in {1,...,\q} {
      \pgfmathsetmacro{\c}{\b+360/((\q+1)*\q)*((\k-0.5)/\q-0.5)}
      \draw[continuation] (B\i-\j) -- (\c:3.05);
    }
  }
}

\node[vertex,label=below:{}] at (O) {};
\end{tikzpicture}
\caption{A finite portion of the homogeneous tree $X_{\q}$.
Dashed edges indicate infinite continuation.}
\label{fig:homogeneous-tree}
\end{figure}

\begin{remark} \normalfont
\label{rem:bounded-degree}
Let $X$ be a tree whose vertices have degree at most $D$, for some integer $D \geq 1$. Then $X$ does not satisfy Definition~\ref{def:multiplicity}, for any integer $h \geq 1$. Indeed, let $y$ be a vertex of $X$ and let $R \geq 1$ be an integer. The geodesic from $y$ to a vertex of the sphere $\{x \in X : \dist_X(x,y) = R\}$ is a path of length $R$ starting at $y$ without backtracking. There are at most $D(D-1)^{R-1}$ such paths, so this sphere contains at most $D(D-1)^{R-1}$ vertices. Suppose that a configuration as in Definition~\ref{def:multiplicity} exists for some integers $L,k \geq 1$. Taking $i = j = 0$ and $b = 1$ in \eqref{eq:multiplicity-distance}, we obtain $\dist_X(x_{0,a},y_{0,1}) = 2h$ for any $1 \leq a \leq k$. Since the vertices $x_{0,1},\ldots,x_{0,k}$ are pairwise distinct, they lie in the sphere of radius $2h$ centred at $y_{0,1}$. Hence $k \leq D(D-1)^{2h-1}$. In particular, for the tree $X_q$, such a configuration does not exist as soon as $k > (q+1)q^{2h-1}$.
\end{remark}

Nevertheless, the preceding argument gives a uniform-in-degree criterion.

\begin{cor}
\label{cor:uniform-degree}
Suppose that $1 <p < \infty$. For any complex function $m \co \N_0 \to \mathbb{C}$, we have
\begin{equation}
\label{eq:degree-supremum}
\norm{M_{m \circ \dist_{X_\infty}}}_{S^p_{X_\infty} \to S^p_{X_\infty}} 
= \sup_{q \geq 2} \bnorm{M_{m \circ \dist_{X_q}}}_{S^p_{X_q} \to S^p_{X_q}}.
\end{equation}
The same equality holds for completely bounded norms. Consequently, the complex function $\widetilde{m} \co \Z \to \mathbb{C}$ induces a completely bounded Fourier multiplier $M_{\widetilde m} \co \L^p(\T) \to \L^p(\T)$ if and only if the ordinary norms in the right-hand side of \eqref{eq:degree-supremum} are uniformly bounded.
\end{cor}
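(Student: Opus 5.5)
The plan is to prove the two inequalities in \eqref{eq:degree-supremum} separately, both through rectangular and finite corners. We then read off the final equivalence from Theorem~\ref{thm-tree-input} and Proposition~\ref{prop:automatic-general}.

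\emph{Inequality ``$\geq$''.} Each $X_q$ embeds isometrically into $X_\infty$ as a subtree. Choose a vertex, select $q+1$ of its neighbours, and proceed inductively, selecting $q$ new neighbours at each newly added vertex. Subtrees of a tree are geodesically convex, so the graph distance of $X_q$ is the restriction of $\dist_{X_\infty}$. Hence $M_{m\circ\dist_{X_q}}$ is the compression of $M_{m\circ\dist_{X_\infty}}$ to the square corner indexed by the image of $X_q$. By Lemma~\ref{lem:finite-corners} its norm, at every matrix level, is at most that of $M_{m\circ\dist_{X_\infty}}$.

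\emph{Inequality ``$\leq$''.} By Lemma~\ref{lem:finite-corners}, the norm on $X_\infty$ is the supremum of the norms of the finite square corners indexed by finite sets $F\subset X_\infty$, and likewise at each matrix level. Given such an $F$, replace it by the finite subtree $T$ spanned by $F$, that is, the union of the geodesics between points of $F$. Distances in $T$ agree with those in $X_\infty$, and the degree of every vertex of $T$ is finite, say at most $D$. Choose $q\geq \max(D,2)$. Then $T$ embeds isometrically into $X_q$: map a root anywhere, and at each vertex send its children injectively to unused neighbours, which is possible since every vertex of $X_q$ has $q+1\geq D$ neighbours. The convexity remark above again gives that distances are preserved. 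Consequently the $F$-corner of $M_{m\circ\dist_{X_\infty}}$ coincides with a finite corner of $M_{m\circ\dist_{X_q}}$, whose norm is bounded by $\|M_{m\circ\dist_{X_q}}\|$. Taking suprema over $F$ yields ``$\leq$'', and the identical argument works at every matrix level, giving the cb statement.

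\emph{Final equivalence.} If the right-hand side of \eqref{eq:degree-supremum} is finite, then $M_{m\circ\dist_{X_\infty}}$ is bounded. Corollary~\ref{cor-automatic-bis} then shows that $M_{\widetilde m}$ is completely bounded. Conversely, if $M_{\widetilde m}$ is cb, then \eqref{eq:tree-upper} bounds $\|M_{m\circ\dist_{X_q}}\|\leq\|M_{m\circ\dist_{X_q}}\|_{\cb}\leq\kappa_p^2\|M_{\widetilde m}\|_{\cb}$ uniformly in $q$.

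The only point needing care is the isometric embedding of a finite subtree into $X_q$, specifically that tree distances are preserved under an injective graph embedding. This holds because the image of a path without backtracking is again a path without backtracking, hence a geodesic. The rest is routine.
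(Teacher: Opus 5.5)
Your proposal is correct and follows essentially the same route as the paper. You embed $X_q$ into $X_\infty$ for one inequality, and you embed the finite subtree spanned by $F$ isometrically into $X_q$ for the other, using the unique-path argument for adjacency-preserving injections between trees. You then conclude with Corollary~\ref{cor-automatic-bis} and \eqref{eq:tree-upper}, as the paper does.

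The one detail the paper makes explicit and you leave implicit is why $\varphi(v)$ still has enough unused neighbours at each extension step: by the no-cycle argument, the image of the parent of $v$ is the only neighbour of $\varphi(v)$ already in the image, and this is exactly what your count $q+1\geq D$ needs.
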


\begin{proof}
Recall that in a tree any two vertices are joined by a unique path \cite[Theorem 1.5.1]{Die25}. Consequently, if $T$ and $T'$ are trees and $\varphi \co T \to T'$ is an injective map sending adjacent vertices to adjacent vertices, then $\varphi$ is isometric. Indeed, the image of the path from $x$ to $y$ is a path from $\varphi(x)$ to $\varphi(y)$, hence the unique one.

Let $q \geq 2$. We claim that any finite tree $T$ whose vertices have degree at most $q+1$ admits such a map into $X_q$. We argue by induction on the number of vertices. Let $\ell$ be a leaf of $T$, let $v$ be its neighbour and let $\varphi$ be such a map defined on $T \setminus \{\ell\}$. If $w \in T \setminus \{\ell\}$ and $\varphi(w)$ is adjacent to $\varphi(v)$, then $w$ is adjacent to $v$. Otherwise the image of the path from $v$ to $w$ together with the edge joining $\varphi(v)$ and $\varphi(w)$ would form a cycle in $X_q$. Since $v$ has at most $q$ neighbours in $T \setminus \{\ell\}$, the vertex $\varphi(v)$ has a neighbour outside $\varphi(T \setminus \{\ell\})$, and we send $\ell$ to it. Exhausting $X_q$ by balls around a fixed vertex and extending at each step, the same argument gives such a map from $X_q$ into $X_\infty$. Here we use that each vertex of $X_\infty$ has infinitely many neighbours, only finitely many of which lie in the image already constructed.

Restriction along the isometric embedding $X_q \to X_\infty$ gives one inequality. Conversely, let $F$ be a finite subset of $X_\infty$. The union $T$ of the geodesics joining the points of $F$ is a finite connected subgraph of $X_\infty$, hence a finite tree. If $q+1$ is at least its maximal degree, the preceding paragraph gives an isometric embedding of $T$ into $X_q$. Thus the corner of $M_{m \circ \dist_{X_\infty}}$ indexed by $F$ is a corner of $M_{m \circ \dist_{X_q}}$. Lemma~\ref{lem:finite-corners} gives the reverse inequality, first at the scalar level and then at every matrix level. The final assertion follows from Theorem~\ref{thm:intro-automatic} and \eqref{eq:tree-upper}.
\end{proof}

\begin{lemma}
\label{lem:even-noncb}
For any $1 < p < \infty$, $p \neq 2$, there exists an even symbol $a \co \Z \to \mathbb{C}$ such that the Fourier multiplier $M_a$ is bounded but not completely bounded on $\L^p(\T)$.
\end{lemma}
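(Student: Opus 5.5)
The plan is to start from a known non-even example and symmetrize it, keeping the two halves of the symbol apart so that the symmetrization does not destroy the failure of complete boundedness. By \cite[Proposition 8.1.3 p.~91]{Pis98} (or \cite[Theorem 1.5]{Arh12}), for $1<p<\infty$, $p\neq 2$, there is a symbol $c \co \Z \to \Cbb$ such that $M_c$ is bounded but not completely bounded on $\L^p(\T)$. We may assume $c$ is supported in $\N_0$. Indeed, write $c = c1_{\N_0} + c1_{\{n<0\}}$. Both pieces are bounded multipliers, since $P_+$ is bounded by \eqref{norm-P+} and the cb boundedness of $P_+$ lets us multiply by $1_{\N_0}$ without losing boundedness. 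If both were completely bounded then $M_c$ would be, which is a contradiction. So one of the pieces fails to be completely bounded, and after a reflection $R$ we may take it supported in $\N_0$. Shifting by $U_1$, which preserves both norms, we may even take $\supp c \subset \N$.

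Next we spread the support out so that we can use a lacunary splitting. Consider the dilated symbol $c_2(n) = c(n/2)$ for $n$ even and $0$ otherwise. Then $M_{c_2}$ corresponds to $f(z)\mapsto$ the action of $M_c$ in the variable $z^2$. The map $f \mapsto f(z^2)$ is a complete isometry on $\L^p(\T)$ onto the functions with even spectrum, and the projection onto even frequencies, $f\mapsto \frac12(f(z)+f(-z))$, is completely contractive. Hence $M_{c_2}$ is bounded and not completely bounded. Honestly this dilation step may be unnecessary, so we keep $c$ supported in $\N$ with $c(0)=0$.

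Now define the even symbol $a(n) = c(|n|)$. For $n>0$ we have $a 1_{\N} = c$, and on the negative side $a1_{\{n<0\}}$ is the reflection of $c$. Thus $a = c + c(-\cdot)$, with $c(0)=0$. The multiplier $M_a = M_c + RM_cR$ is bounded as a sum of two bounded operators. For the failure of complete boundedness, note that $M_c = P_+ M_a$ on trigonometric polynomials, since $c$ is supported in $\N$ and $P_+$ kills the reflected part supported in $-\N$. If $M_a$ were completely bounded, then $M_c$ would be completely bounded with $\norm{M_c}_{\cb} \leq r_{\cb,p}\norm{M_a}_{\cb}$, because $r_{\cb,p}<\infty$ for $1<p<\infty$ (the Riesz projection is completely bounded, $\L^p(\T,S^p)$ being UMD). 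This contradicts the choice of $c$.

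The only real obstacle is the initial reduction to a one-sided example. This is where cb boundedness of $P_+$ is used twice: once to split $c$ into pieces without losing boundedness, and once to recover $M_c$ from $M_a$. With that in hand the remaining steps follow directly from the translation and reflection invariance recalled in Section~\ref{sec:preliminaries}.
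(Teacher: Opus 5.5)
Your proof is correct and follows the paper's argument. You split a bounded but not completely bounded symbol into one-sided pieces using the scalar Riesz projection, reflect the non-cb piece to the positive side, symmetrize it, and recover it from the even symbol through the completely bounded Riesz projection. The differences are cosmetic: you absorb the origin by the shift $U_1$, whereas the paper discards the term at the origin as trivially completely bounded; and your dilation paragraph is unused and can be deleted.
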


\begin{proof}
Take a bounded Fourier multiplier $M_b \co \L^p(\T) \to \L^p(\T)$ which is not completely bounded, as provided by \cite[Theorem 1.5]{Arh12}. Decompose $b$ into its restrictions to the positive integers, the negative integers and the origin. Each restriction remains a bounded Fourier multiplier, since the scalar Riesz projection is bounded. The term at the origin is completely bounded. Thus at least one of the two other terms is not completely bounded. By reflection, we may assume that $b_+(n) \ov{\mathrm{def}}{=} 1_{\{n>0\}}b(n)$ has this property. Set $a(n) \ov{\mathrm{def}}{=} b_+(n)+b_+(-n)$. This is an even bounded Fourier multiplier. If the Fourier multiplier $M_a$ were completely bounded, its composition with the completely bounded projection onto the strictly positive frequencies would make $M_{b_+}$ completely bounded on $\L^p(\T)$. This is a contradiction.
\end{proof}

\begin{cor}
\label{cor:ordinary-failure}
Fix $1 < p < \infty$, $p \neq 2$, and let $X$ satisfy Definition~\ref{def:multiplicity} for some $h$. There exists an even function $a \co \Z \to \mathbb{C}$ which induces a bounded Fourier multiplier $M_a \co \L^p(\T) \to \L^p(\T)$ such that the Schur multiplier $M_{a|_{\N_0} \circ \dist_X}$ is unbounded on the Schatten space $S^p_X$. Moreover, there exists a sequence of finitely supported even symbols $(a_N)$ such that
\begin{equation}
\label{eq:finite-ordinary-failure}
\sup_N\norm{M_{a_N}}_{\L^p(\T) \to \L^p(\T)}
<\infty,
\quad
\lim_{N \to \infty} \norm{M_{a_N|_{\N_0} \circ \dist_X}}_{S^p_X \to S^p_X} 
= \infty.
\end{equation}
\end{cor}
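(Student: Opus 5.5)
The plan is to combine Lemma~\ref{lem:even-noncb} with Proposition~\ref{prop:automatic-general}. Let $a$ be the even symbol of Lemma~\ref{lem:even-noncb}: $M_a$ is bounded on $\L^p(\T)$ but not completely bounded. Put $m \ov{\mathrm{def}}{=} a|_{\N_0}$. Since $a$ is even, $\widetilde m = a$. If $M_{m \circ \dist_X}$ were bounded on $S^p_X$, then \eqref{eq:general-automatic} would make $M_{\widetilde m} = M_a$ completely bounded on $\L^p(\T)$, a contradiction. Hence the Schur multiplier is unbounded, which is the first assertion.

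For the finite version, I would truncate: $a_N \ov{\mathrm{def}}{=} a\,1_{\{|n|\leq N\}}$. Uniform boundedness of $M_{a_N}$ on $\L^p(\T)$ does not follow from sharp truncation alone, since that costs a Riesz projection on each side. Write $1_{\{|n|\leq N\}} = 1_{\{n \geq -N\}} - 1_{\{n \geq N+1\}}$. Each term is a modulated Riesz projection, of norm at most $\norm{P_+}_{\L^p(\T)\to\L^p(\T)}$ by the translation invariance recalled in Section~\ref{sec:preliminaries}. Hence
\[
\norm{M_{a_N}}_{\L^p(\T) \to \L^p(\T)} \leq 2\norm{P_+}_{\L^p(\T) \to \L^p(\T)}\norm{M_a}_{\L^p(\T) \to \L^p(\T)},
\]
which is uniform in $N$ by \eqref{norm-P+}. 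Each $a_N$ is even and finitely supported, and $a_N \to a$ pointwise.

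The divergence comes from Lemma~\ref{lem:finite-corners}(3), applied to the pointwise convergent symbols $a_N|_{\N_0}\circ\dist_X \to a|_{\N_0}\circ\dist_X$. Suppose $\liminf_N \norm{M_{a_N|_{\N_0}\circ \dist_X}}_{S^p_X\to S^p_X} < \infty$. Then along a subsequence the norms stay bounded, and the limit multiplier $M_{a|_{\N_0}\circ\dist_X}$ would be bounded, contradicting the first part. Therefore the norms tend to $\infty$. Note that each $M_{a_N|_{\N_0}\circ\dist_X}$ is itself bounded, indeed completely bounded, by \eqref{eq:tree-upper}, since $M_{a_N}$ is a finite trigonometric combination and hence completely bounded. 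So the limit statement is meaningful.

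The only delicate point is the uniform ordinary bound for the truncations. This is exactly where evenness and the scalar Riesz projection enter, and the modulation argument above handles it. The rest is a direct application of the earlier results.
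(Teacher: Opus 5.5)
Your proof is correct and follows the paper's argument: unboundedness of $M_{a|_{\N_0}\circ\dist_X}$ from Lemma~\ref{lem:even-noncb} and Proposition~\ref{prop:automatic-general}, then a subsequence argument with Lemma~\ref{lem:finite-corners} for finitely supported even truncations. The only difference is the choice of truncation. The paper uses the Fej\'er means $a_N(n) = (1-\frac{|n|}{N+1})_+a(n)$, which are contractive, so that $\norm{M_{a_N}}_{\L^p(\T)\to\L^p(\T)} \leq \norm{M_a}_{\L^p(\T)\to\L^p(\T)}$. Your sharp cutoff costs the factor $2\norm{P_+}_{\L^p(\T)\to\L^p(\T)} = \frac{2}{\sin\frac{\pi}{p}}$, which is harmless here since only uniformity in $N$ is needed.
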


\begin{proof}
Choose $a$ as in Lemma~\ref{lem:even-noncb}. Proposition~\ref{prop:automatic-general} implies that $M_{a|_{\N_0} \circ \dist_X}$ cannot be bounded on $S^p_X$. Define
\[
a_N(n) 
\ov{\mathrm{def}}{=} \left(1-\frac{|n|}{N+1}\right)_+a(n),\quad n \in \Z.
\]
The multiplier with the first factor as symbol is convolution by the Fej\'er kernel, hence a contraction on $\L^p(\T)$. Therefore $\norm{M_{a_N}}_{\L^p(\T) \to \L^p(\T)} \leq \norm{M_a}_{\L^p(\T) \to \L^p(\T)}$. Each $a_N$ is finitely supported and even, and $a_N(n) \to a(n)$ for any $n$. Since $a_N$ is finitely supported, $M_{a_N}$ is completely bounded. Theorem~\ref{thm-tree-input} therefore implies that $M_{a_N|_{\N_0} \circ \dist_X}$ is completely bounded for each $N$.

If these Schur norms did not tend to infinity, there would exist a subsequence along which they remain bounded. Since $a_N(n) \to a(n)$ for any $n \in \Z$, Lemma~\ref{lem:finite-corners}, applied along this subsequence, would imply that the Schur multiplier $M_{a|_{\N_0} \circ \dist_X}$ is bounded. This contradicts Proposition~\ref{prop:automatic-general}. This proves \eqref{eq:finite-ordinary-failure}.
\end{proof}

\begin{remark} \normalfont
The first part of Corollary~\ref{cor:ordinary-failure} rules out a general ordinary-norm version of \eqref{eq:tree-upper}, even for a fixed tree $X_\infty$. It does not contradict \cite[Remark 1.2]{GPPW26}, which records a conditional implication. The second part shows that restricting the proposed estimate to finitely supported symbols does not repair the failure of a uniform constant.
\end{remark}

\section{Positive definite symbols and sharp dependence on \texorpdfstring{$p$}{p}}
\label{sec:constants}

We start with the following result. Here $\M(\T)$ denotes the space of finite complex Borel measures on $\T$, equipped with the total variation norm.

\begin{lemma}
\label{lem:measure-phase}
Suppose that $1 < p < \infty$. Let $\mu$ be a finite complex Borel measure on $\T$ and suppose that $m(n) = \int_\T z^n\d\mu(z)$ for any integer $n \geq 0$. Then, on any countable tree $X$, we have
\begin{equation}
\label{eq:measure-phase}
\norm{M_{m \circ \dist_X}}_{\cb, S^p_X \to S^p_X}
\leq \kappa_p\norm{\mu}_{\M(\T)}.
\end{equation}
\end{lemma}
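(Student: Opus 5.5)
The plan is to write the Schur multiplier as an average of the phase multipliers $\Delta_z^X$ from \eqref{eq:kappa-definition} against the measure $\mu$. By hypothesis, for any $x,y \in X$,
\[
m(\dist_X(x,y)) = \int_\T z^{\dist_X(x,y)}\d\mu(z),
\]
since $\dist_X(x,y) \geq 0$. Only the values of $m$ on $\N_0$ enter, so the values of the Fourier coefficients of $\mu$ at negative integers play no role. This is why the bound involves $\kappa_p$ only once, rather than the $\kappa_p^2$ and Fourier multiplier norm of Theorem~\ref{thm-tree-input}.

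Fix a finitely supported matrix $a \in S^p_n(S^p_X)$, meaning that only finitely many blocks $a_{x,y} \in \M_n$ are nonzero. Consider the map $z \mapsto (\Id_{S^p_n} \ot \Delta_z^X)(a)$. It takes values in a finite-dimensional space, and it is a trigonometric polynomial in $z$, hence continuous. We can therefore form the Bochner integral
\[
\int_\T (\Id_{S^p_n} \ot \Delta_z^X)(a)\d\mu(z).
\]
We identify it entrywise: its $(x,y)$ block is
\[
\Big(\int_\T z^{\dist_X(x,y)}\d\mu(z)\Big)a_{x,y} = m(\dist_X(x,y))\,a_{x,y}.
\]
Hence the integral equals $(\Id_{S^p_n} \ot M_{m \circ \dist_X})(a)$.

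The triangle inequality for Bochner integrals with respect to the total variation $|\mu|$ then gives
\[
\bnorm{(\Id_{S^p_n} \ot M_{m \circ \dist_X})(a)}_{S^p_n(S^p_X)}
\leq \int_\T \bnorm{(\Id_{S^p_n} \ot \Delta_z^X)(a)}\d|\mu|(z)
\leq \kappa_p\norm{\mu}_{\M(\T)}\norm{a}_{S^p_n(S^p_X)}.
\]
Here we use the matrix-level bound $\kappa_p$ recorded before Theorem~\ref{thm-tree-input}, which holds uniformly in $z \in \T$.

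Finitely supported matrices are dense and the estimate is uniform in $n$. Lemma~\ref{lem:finite-corners}, applied to finite corners at every matrix level, then yields \eqref{eq:measure-phase}.

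The main point to check is that $\kappa_p$ is indeed a completely bounded supremum. This is built into \eqref{eq:kappa-definition} together with the remark preceding Theorem~\ref{thm-tree-input}. Once that is granted, no obstacle remains, and the only remaining care is measurability of the integrand, which is automatic in the finite-dimensional setting.
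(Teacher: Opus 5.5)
Your proposal is correct and follows the paper's argument: write $(\Id_{S^p_n}\ot M_{m \circ \dist_X})(a)$ as the Bochner integral $\int_\T (\Id_{S^p_n}\ot\Delta_z^X)(a)\d\mu(z)$, identify its entries using the moment hypothesis, and bound it by $\kappa_p\norm{\mu}_{\M(\T)}\norm{a}$ at each matrix level. The only difference is minor: you integrate only over finitely supported matrices and then invoke density and Lemma~\ref{lem:finite-corners}, whereas the paper first extends the continuity of $z \mapsto \Delta_z^X(a)$ to all $a \in S^p_X$ by uniform approximation and integrates directly; both routes work.
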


\begin{proof}
For a matrix $a$ with finite support, the map $\T \to S^p_X$, $z \mapsto \Delta_z^X(a)$ is continuous. For an arbitrary $a \in S^p_X$, choose matrices $a_N$ with finite support such that $\norm{a-a_N}_{S^p_X} \to 0$. Since
\[
\sup_{z \in \T}\norm{\Delta_z^X(a)-\Delta_z^X(a_N)}_{S^p_X}
\leq \kappa_p\norm{a-a_N}_{S^p_X},
\]
the map $\T \to S^p_X$, $z \mapsto \Delta_z^X(a)$ is a uniform limit of continuous maps. It is therefore continuous and bounded by $\kappa_p\norm{a}_{S^p_X}$. It is consequently Bochner integrable with respect to the measure $\mu$. Applying the continuous matrix coefficient functionals to this integral gives 
\begin{equation}
\label{equal-utile}
M_{m \circ \dist_X}(a) 
= \int_\T\Delta_z^X(a) \d\mu(z),
\end{equation}
since $m(\dist_X(x,y)) = \int_\T z^{\dist_X(x,y)} \d\mu(z)$ for any $x,y \in X$. Its norm can be estimated by
$$
\norm{M_{m \circ \dist_X}(a)}_{S^p_X}
\ov{\eqref{equal-utile}}{=}  \norm{\int_\T\Delta_z^X(a) \d\mu(z)}_{S^p_X}
\leq \int_\T\norm{\Delta_z^X(a) }_{S^p_X}\d |\mu|(z)
\ov{\eqref{eq:kappa-definition}}{\leq} \kappa_p\norm{\mu}_{\M(\T)}\norm{a}_{S^p_X}.
$$ 
The same proof applies at each matrix level.
\end{proof}

Recall that a function $a \co \Z \to \mathbb{C}$ is positive definite if all its finite Toeplitz matrices $[a(i-j)]$ are positive semidefinite, see \cite[Appendix D]{BrO08}. By Bochner's theorem \cite{Rud90}, this is equivalent to the existence of a finite positive measure $\mu$ on $\T$ with $a(n) = \int_\T z^n \d\mu(z)$. In this case $\mu(\T) = a(0)$. By \cite[Theorem 9.6 p.~86]{Pie84} and \cite[Theorem 3.26 p.~45]{ArK23}, we have
\begin{equation}
\label{eq:pd-Fourier-norm}
\norm{M_a}_{\L^p(\T) \to \L^p(\T)} 
= \norm{M_a}_{\cb,\L^p(\T) \to \L^p(\T)} 
= a(0).
\end{equation}

\begin{prop}
\label{prop:pd-upper}
Suppose that $1 < p < \infty$. Let $a \co \Z \to \mathbb{C}$ be even and positive definite, and set $m = a|_{\N_0}$. For any countable tree $X$, we have
\begin{equation}
\label{eq:pd-upper}
\norm{M_{m \circ \dist_X}}_{\cb,S^p_X \to S^p_X} 
\leq \kappa_p\norm{M_a}_{\cb,\L^p(\T) \to \L^p(\T)} 
\lesssim p^*\norm{M_a}_{\cb,\L^p(\T) \to \L^p(\T)}.
\end{equation}
\end{prop}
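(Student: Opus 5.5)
The plan is to combine Bochner's theorem with Lemma~\ref{lem:measure-phase} and the identity \eqref{eq:pd-Fourier-norm}. Since $a$ is positive definite on $\Z$, Bochner's theorem gives a finite positive Borel measure $\mu$ on $\T$ with $a(n) = \int_\T z^n \d\mu(z)$ for every $n \in \Z$, and $\mu(\T) = a(0)$. Restricting to $n \geq 0$, we get $m(n) = a(n) = \int_\T z^n \d\mu(z)$ for any $n \in \N_0$. This is exactly the hypothesis of Lemma~\ref{lem:measure-phase}.

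The lemma then yields, for any countable tree $X$,
\[
\norm{M_{m \circ \dist_X}}_{\cb,S^p_X \to S^p_X} \leq \kappa_p \norm{\mu}_{\M(\T)}.
\]
Because $\mu$ is positive, its total variation is $\norm{\mu}_{\M(\T)} = \mu(\T) = a(0)$. By \eqref{eq:pd-Fourier-norm}, $a(0) = \norm{M_a}_{\cb,\L^p(\T) \to \L^p(\T)}$, which gives the first inequality in \eqref{eq:pd-upper}.

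The second inequality follows from the bound $\kappa_p \lesssim p^*$ of Theorem~\ref{thm-tree-input}. Evenness of $a$ is not really used, except to note that $\widetilde m = a$, so that $\norm{M_a}_{\cb}$ is the natural Fourier quantity matching \eqref{eq:radial-comparison-intro}.

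There is essentially no obstacle here. The only points to check are that the measure from Bochner's theorem reproduces $m$ on all of $\N_0$, and that positivity makes the total variation equal to $a(0)$. Both are immediate. The substantive input, the continuity and Bochner integrability of $z \mapsto \Delta_z^X(a)$ at every matrix level, has already been handled in Lemma~\ref{lem:measure-phase}.
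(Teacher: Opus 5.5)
Your proposal is correct and follows essentially the same route as the paper: take the positive Bochner measure $\mu$ of $a$, apply Lemma~\ref{lem:measure-phase}, use $\norm{\mu}_{\M(\T)} = \mu(\T) = a(0)$ together with \eqref{eq:pd-Fourier-norm}, and finish with $\kappa_p \lesssim p^*$. Your remark that evenness enters only through the identification $\widetilde m = a$ is also accurate.
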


\begin{proof}
Apply Lemma~\ref{lem:measure-phase} to the positive representing measure $\mu$ of $a$, and use \eqref{eq:pd-Fourier-norm}.
\begin{align*}
\MoveEqLeft
\norm{M_{m \circ \dist_X}}_{\cb,S^p_X \to S^p_X}  
\ov{\eqref{eq:measure-phase}}{\leq} \kappa_p \norm{\mu}_{\M(\T)}        
\ov{\eqref{eq:pd-Fourier-norm}}{=} \kappa_p\norm{M_a}_{\cb,\L^p(\T) \to \L^p(\T)} 
\lesssim p^*\norm{M_a}_{\cb,\L^p(\T) \to \L^p(\T)}.
\end{align*}
\end{proof}

We give a lower estimate in which the symbol does not depend on $p$.

\begin{prop}
\label{prop:cosine-lower}
Suppose that $1 < p < \infty$. Let $X_2$ be the homogeneous tree of degree three and let $m(n) = \cos(\frac{\pi n}{2})$ for $n \geq 0$. Then $\norm{M_{\widetilde m}}_{\cb,\L^p(\T) \to \L^p(\T)} = 1$ and
\begin{equation}
\label{eq:cosine-lower}
\norm{M_{m \circ \dist_{X_2}}}_{S^p_{X_2} \to S^p_{X_2}}
\geq 2r_{\cb,p}-1
\geq \frac{2}{\sin\frac{\pi}{p}}-1
\geq \frac{p^*}{\pi}.
\end{equation}
\end{prop}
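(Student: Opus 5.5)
The first claim follows from positive definiteness. Write $\widetilde m(n)=\cos(\frac{\pi n}{2})=\frac12(\i^n+(-\i)^n)=\int_\T z^n\d\mu(z)$ with $\mu=\frac12(\delta_{\i}+\delta_{-\i})$. This is a positive measure of mass $\widetilde m(0)=1$, so $\widetilde m$ is positive definite, and \eqref{eq:pd-Fourier-norm} gives $\norm{M_{\widetilde m}}_{\cb,\L^p(\T)\to\L^p(\T)}=1$.

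For the lower bound I would imitate Lemma~\ref{lem:shifted-detection}, using a configuration that exists in $X_2$ despite Remark~\ref{rem:bounded-degree}. Fix a bi-infinite geodesic $(v_n)_{n\in\Z}$ in $X_2$. Each $v_n$ has exactly one further neighbour $w_n$ off the geodesic. Put $x_i=v_i$ and $y_j=w_j$. Then $\dist_{X_2}(x_i,y_j)=|i-j|+1$, and all these vertices are distinct. The rectangular corner of $M_{m\circ\dist_{X_2}}$ on these two families is therefore the Toeplitz Schur multiplier with symbol $b(n)=m(|n|+1)=\cos(\frac{\pi(|n|+1)}{2})=-\sin(\frac{\pi|n|}{2})$. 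By Lemma~\ref{lem:finite-corners}, its ordinary norm on $S^p_\Z$ is at most $C\ov{\mathrm{def}}{=}\norm{M_{m\circ\dist_{X_2}}}_{S^p_{X_2}\to S^p_{X_2}}$. I would also use the corner on $\{x_i\}\times\{x_j\}$, which is the Toeplitz multiplier of $\widetilde m$ itself. Now write
\[
b(n)=-\operatorname{sgn}(n)\sin\Big(\frac{\pi n}{2}\Big)=\frac{\i}{2}\operatorname{sgn}(n)\big(\i^n-(-\i)^n\big).
\]
After modulating by the complete isometries $U_j$ (conjugation by the diagonal unitaries $\diag(\i^{k})$ on the Schur side), this exhibits combinations of $\operatorname{sgn}$ times the phases $\i^n$ and $(-\i)^n$. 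The aim is to solve for the Schur multiplier with symbol $\operatorname{sgn}(n)$, i.e.\ $2P_+-\Id$ up to the diagonal, restricted to a sublattice such as $2\Z$, where $\i^n=(-\i)^n$ is harmless. The expected outcome is
\[
\norm{M^\HS_{2\cdot 1_{\N_0}-\delta_0}}_{S^p_\Z\to S^p_\Z}\le C
\]
up to combining with the $\widetilde m$-corner, hence $2\norm{M^\HS_{1_{\N_0}}}-1\le C$.

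The main obstacle is that this only controls the \emph{ordinary} norm of the triangular truncation on $S^p_\Z$, while the statement involves $r_{\cb,p}$. So I must show that for the Toeplitz triangular projection the ordinary Schur norm already dominates $\norm{P_+}_{\cb}$. My plan is a lexicographic reindexing. Identify $\{0,\dots,L\}\times\{1,\dots,k\}$ with a subset of $\Z$ via $(i,a)\mapsto Ni+a$ with $N>k$. The upper triangle becomes $\{i<j\}\cup\{i=j,\ a\le b\}$, which is the amplified strict block triangle plus a diagonal-block triangular part. I would remove the diagonal-block part by also using the opposite inner ordering, averaging the two reindexings to isolate $\Id_{S^p_k}\ot M^\HS_{1_{\N}}+\frac12(\text{block diagonal})$. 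This bounds every amplification of the Toeplitz triangular projection by the ordinary norm, and by \eqref{eq:Z-transference} its cb norm is $r_{\cb,p}$. If the averaging loses constants, I would instead try the diagonal unitaries $\diag(z^{a})$ and integrate over $z$.

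Finally, $r_{\cb,p}\ge\norm{P_+}_{\L^p(\T)\to\L^p(\T)}=1/\sin(\frac{\pi}{p})$ by \eqref{norm-P+}. The last inequality $\frac{2}{\sin(\pi/p)}-1\ge\frac{p^*}{\pi}$ is an elementary calculus check. By symmetry $p\leftrightarrow p'$ it suffices to treat $p\ge2$. The crude bound $\sin(\frac{\pi}{p})\le\frac{\pi}{p}$ gives $\frac{2p}{\pi}-1\ge\frac{p}{\pi}$ only for $p\ge\pi$. On $2\le p\le\pi$ I would use $\sin(\frac{\pi}{p})\le1$ together with the monotonicity of $\frac{2}{\sin(\pi/p)}-1-\frac{p}{\pi}$.
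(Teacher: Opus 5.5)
Your treatment of $\|M_{\widetilde m}\|_{\cb,\L^p(\T)\to\L^p(\T)}=1$ and the closing elementary inequalities are fine. The paper's last step is shorter: $\frac{2}{s}-1\ge\frac1s$ for $s=\sin\frac{\pi}{p}\le 1$, and $\sin\frac{\pi}{p}=\sin\frac{\pi}{p^*}\le\frac{\pi}{p^*}$.

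\textbf{The extraction step fails as written.} Your corner symbol $b(n)=-\sin(\frac{\pi|n|}{2})$ vanishes identically on $2\Z$, so restricting to that sublattice gives the zero multiplier. You need rows and columns in \emph{opposite} parity classes, for instance rows $x_{2i}$ and columns $y_{2j+1}$. Then every difference is odd and $(-\i)^n=-\i^n$. After multiplying rows by $(-1)^i$ and columns by $(-1)^j$ (reindexed), you get a $\pm1$ Toeplitz symbol with no zero on the diagonal: $1$ for $i>j$ and $-1$ for $i\le j$. Up to translation, reflection and sign, this is the symbol of $\Id-2P_+$. This is exactly the paper's configuration, with $x_i=v_{2i}$ on a ray and $y_j$ the leaf at $v_{2j-1}$. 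No combination with the $\widetilde m$-corner is needed, and combining two different corners could not be done without losing constants anyway.

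\textbf{The more serious gap is the upgrade from ordinary to cb norm.} This is where the constant $2r_{\cb,p}-1$ is decided. Suppose you use the same lexicographic injection $(i,a)\mapsto Ni+a$ for rows and columns. Then each diagonal block inherits a triangular pattern. Averaging with the reversed inner order leaves the error term $\frac12(\mathcal E_{\mathrm{block}}-\mathcal E_{\mathrm{diag}})$, whose norm is at least $\frac12$ for $k\ge 2$. So you only get $r_{\cb,p}\le\|M^\HS_{1_{\N_0}}\|+O(1)$, that is $C\ge 2r_{\cb,p}-O(1)$. That is not the statement, and near $p=2$ it no longer gives $C\ge p^*/\pi$.

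Integrating over the conjugations by $\diag(z^a)$ does not help either. Schur multipliers commute with diagonal conjugations, and averaging the input merely projects onto the entries with $a=b$.

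\textbf{The missing idea is to interlace different injections for rows and columns.} The corner is rectangular, so rows and columns may be embedded by \emph{different} maps. Take $u_{i,a}=2k(i-1)+k+a$ and $v_{j,b}=2k(j-1)+b$. Then $u_{i,a}<v_{j,b}$ if and only if $i<j$. Hence the $\pm1$ symbol $h$ satisfies $h(u_{i,a},v_{j,b})=h(i,j)$. Here the value $-1$ on the diagonal is what makes the case $i=j$ consistent. The $k$-th amplification of every finite corner of $M_h$ is then literally a rectangular restriction of $M_h$. This gives $\|M_h\|_{\cb}=\|M_h\|$ with no loss, and \eqref{eq:Z-transference} gives $\|M_h\|_{\cb}=\|\Id-2P_+\|_{\cb,\L^p(\T)\to\L^p(\T)}\ge 2r_{\cb,p}-1$.
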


\begin{proof}
The even extension $\widetilde{m}$, defined by $\widetilde{m}(n) \ov{\mathrm{def}}{=} m(|n|)$, is represented by the positive probability measure $\frac{1}{2}(\delta_{\i}+\delta_{-\i})$. Indeed, for any integer $n \in \Z$,
\[
\widetilde m(n)
= m(|n|)
= \cos\left(\frac{\pi n}{2}\right)
= \frac{1}{2}\big(\i^n+(-\i)^n\big)
= \int_\T z^n \d\left(\frac{1}{2}(\delta_{\i}+\delta_{-\i})\right)(z).
\]
Hence \eqref{eq:pd-Fourier-norm} gives its Fourier multiplier norm. Choose a geodesic ray $(v_j)_{j \geq 0}$ in the tree $X_2$. For each integer $j \geq 1$, choose the neighbour $y_j$ of $v_{2j-1}$ outside the ray, and put $x_i \ov{\mathrm{def}}{=} v_{2i}$ for any integer $i \geq 1$. The unique geodesics give
\begin{equation}
\label{eq:trivalent-distances}
\dist_{X_2}(x_i,y_j) =
\begin{cases}
2(j-i),&i<j,\\
2(i-j+1),&i \geq j.
\end{cases}
\end{equation}
The configuration is illustrated in Figure~\ref{fig:trivalent-corner}. 
\begin{figure}[ht]
\centering
\begin{tikzpicture}[every node/.style={font=\small},vertex/.style={circle,fill=black,inner sep=1.5pt}]
\node[vertex,label=below:$v_0$] (v0) at (0,0) {};
\node[vertex,label=below:$v_1$] (v1) at (1.3,0) {};
\node[vertex,label=below:{$v_2 = x_1$}] (v2) at (2.6,0) {};
\node[vertex,label=below:$v_3$] (v3) at (3.9,0) {};
\node[vertex,label=below:{$v_4 = x_2$}] (v4) at (5.2,0) {};
\node[vertex,label=below:$v_5$] (v5) at (6.5,0) {};
\node[vertex,label=below:{$v_6 = x_3$}] (v6) at (7.8,0) {};
\draw (v0) -- (v1) -- (v2) -- (v3) -- (v4) -- (v5) -- (v6);
\draw[densely dotted] (v6) -- (9,0);
\node at (9.25,0) {$\cdots$};
\node[vertex,label=above:$y_1$] (y1) at (1.3,1.15) {};
\node[vertex,label=above:$y_2$] (y2) at (3.9,1.15) {};
\node[vertex,label=above:$y_3$] (y3) at (6.5,1.15) {};
\draw (v1) -- (y1);
\draw (v3) -- (y2);
\draw (v5) -- (y3);
\end{tikzpicture}
\caption{The vertices $x_i = v_{2i}$ lie on the geodesic ray, while $y_j$ is the neighbour of $v_{2j-1}$ chosen outside the ray. Only the edges relevant to the argument are displayed.}
\label{fig:trivalent-corner}
\end{figure}
Let
\[
s(i,j) \ov{\mathrm{def}}{=} m(\dist_{X_2}(x_i,y_j)), \quad i,j \geq 1.
\]
By \eqref{eq:trivalent-distances}, we have
\[
s(i,j)
=
\begin{cases}
(-1)^{j-i},&i<j,\\
(-1)^{i-j+1},&i \geq j.
\end{cases}
\]
Now multiply row $i$ of this symbol by $(-1)^i$ and column $j$ by $(-1)^j$. If $i < j$, then
\[
(-1)^i s(i,j)(-1)^j
= (-1)^{i+j+j-i}
= (-1)^{2j}
= 1,
\]
whereas if $i \geq j$, then
\[
(-1)^i s(i,j)(-1)^j
= (-1)^{i+j+i-j+1}
= (-1)^{2i+1}
= -1.
\]
Thus the resulting symbol is
\begin{equation}
\label{eq:hilbert-corner}
h(i,j) 
\ov{\mathrm{def}}{=}
\begin{cases}
1,&i<j,\\
-1,&i \geq j.
\end{cases}
\end{equation}
More generally, let $s = [s(i,j)]$ be a Schur symbol and let $(\alpha_i)$ and $(\beta_j)$ be unimodular scalar sequences. If
\[
s'(i,j) = \alpha_i s(i,j)\beta_j,
\]
and $D_\alpha = \diag(\alpha_i)$ and $D_\beta = \diag(\beta_j)$ are the corresponding diagonal unitaries, then
\[
M_{s'}(A) = D_\alpha M_s(A)D_\beta
\]
for any matrix $A$. Since left and right multiplication by unitaries are isometries on any Schatten class, we have
\[
\norm{M_{s'}(A)}_{S^p}
= \norm{M_s(A)}_{S^p}.
\]
Consequently, we have $
\norm{M_{s'}}_{S^p \to S^p}
= \norm{M_s}_{S^p \to S^p}$ and the same argument applies at every matrix level. Now, we identify this rectangular corner with a Schatten class. Let $U_x \co \ell^2_\N \to \ell^2_{X_2}$, $e_i \mapsto e_{x_i}$ and $U_y \co \ell^2_\N \to \ell^2_{X_2}$, $e_j \mapsto e_{y_j}$. These maps are isometries. Define
\[
\Phi(A) \ov{\mathrm{def}}{=} U_x A U_y^*,
\quad
A \in S^p_\N.
\]
The operator $\Phi(A)$ is precisely the rectangular matrix supported on the rows indexed by the $x_i$ and the columns indexed by the $y_j$. Moreover,
\[
\Phi(A)^*\Phi(A)
= U_y A^*A U_y^*,
\]
so $\Phi(A)$ and $A$ have the same nonzero singular values, with the same multiplicities. Hence $
\norm{\Phi(A)}_{S^p_{X_2}}
= \norm{A}_{S^p_\N}$. By the definition of $s$, we have
\[
M_{m \circ \dist_{X_2}}(\Phi(A))
= \Phi(M_s(A)).
\]
It follows that
\[
\norm{M_{m \circ \dist_{X_2}}}_{S^p_{X_2} \to S^p_{X_2}}
\geq \norm{M_s}_{S^p_\N \to S^p_\N}
= \norm{M_h}_{S^p_\N \to S^p_\N}.
\]
Now, define the function $c \co \Z \to \mathbb{C}$ by
\[
c(n)
\ov{\mathrm{def}}{=}
\begin{cases}
1,&n < 0,\\
-1,&n \geq 0.
\end{cases}
\]
Then $h(i,j) = c(i-j)$ for any integers $i,j \geq 1$. We first show that
\begin{equation}
\label{eq:triangular-cb-equality}
\norm{M_h}_{S^p_\N \to S^p_\N}
= \norm{M_h}_{\cb,S^p_\N \to S^p_\N}.
\end{equation}
Indeed, fix integers $L,k \geq 1$ and put
\[
u_{i,a} 
\ov{\mathrm{def}}{=} 2k(i-1)+k+a,
\quad
v_{j,b} 
\ov{\mathrm{def}}{=} 2k(j-1)+b,
\]
for $1 \leq i,j \leq L$ and $1 \leq a,b \leq k$. These indices satisfy $
u_{i,a} < v_{j,b} \Longleftrightarrow  i < j$. Consequently, we have
\[
h(u_{i,a},v_{j,b}) 
= h(i,j).
\]
The indices in each of the two families are pairwise distinct. Thus, using the corresponding row and column isometries, any finite corner of the $k$-th Schatten amplification of $M_h$ can be identified isometrically with a rectangular restriction of $M_h$. We deduce that
\[
\norm{\Id_{S^p_k}\ot M_h}_{S^p_k(S^p_\N) \to S^p_k(S^p_\N)}
\leq \norm{M_h}_{S^p_\N \to S^p_\N}.
\]
Taking the supremum over $k \geq 1$ and using Lemma~\ref{lem:finite-corners} proves \eqref{eq:triangular-cb-equality}.

Moreover, after shifting the indices by one, the Toeplitz multiplier on $S^p_\N$ is identified with the restriction of $M_c^\HS$ to $S^p_{\N_0}$. As recalled after \eqref{eq:Z-transference}, restricting a Toeplitz Schur multiplier from $\Z$ to $\N_0$ does not change its norm, including at every matrix level. Hence
\[
\norm{M_h}_{\cb,S^p_\N \to S^p_\N}
= \norm{M_c^\HS}_{\cb,S^p_\Z \to S^p_\Z}
\ov{\eqref{eq:Z-transference}}{=}
\norm{M_c}_{\cb,\L^p(\T) \to \L^p(\T)}.
\]
Since $c = 1-2 1_{\N_0}$, we have $M_c = \Id-2P_+$. Consequently, we have
\begin{align*}
\MoveEqLeft
\norm{M_{m \circ \dist_{X_2}}}_{S^p_{X_2} \to S^p_{X_2}}
\geq \norm{M_h}_{S^p_\N \to S^p_\N}
\ov{\eqref{eq:triangular-cb-equality}}{=}
\norm{M_h}_{\cb,S^p_\N \to S^p_\N}\\
&= \norm{\Id-2P_+}_{\cb,\L^p(\T) \to \L^p(\T)}
\geq 2\norm{P_+}_{\cb,\L^p(\T) \to \L^p(\T)}-1\\
&= 2r_{\cb,p}-1
\geq 2\norm{P_+}_{\L^p(\T) \to \L^p(\T)}-1
\ov{\eqref{norm-P+}}{=}
\frac{2}{\sin\frac{\pi}{p}}-1.
\end{align*}
Recall that $p^* = \max\{p,p'\}$, where $p' = \frac{p}{p-1}$. If $p \geq 2$, then $p^* = p$. If $1 < p \leq 2$, then $p^* = p'$ and $\frac{\pi}{p^*} = \pi-\frac{\pi}{p}$. Therefore, using $\sin(\pi-t) = \sin(t)$, we obtain
\[
\sin\frac{\pi}{p}
= \sin\frac{\pi}{p^*}
\leq \frac{\pi}{p^*}.
\]
Since $\sin(\frac{\pi}{p}) \leq 1$, we conclude that $
\frac{2}{\sin\frac{\pi}{p}}-1
\geq \frac{1}{\sin\frac{\pi}{p}}
\geq \frac{p^*}{\pi}$. This proves \eqref{eq:cosine-lower}. Finally, the preceding argument also gives
\[
\norm{\Id-2P_+}_{\cb,\L^p(\T) \to \L^p(\T)}
\leq \norm{M_{m \circ \dist_{X_2}}}_{\cb,S^p_{X_2} \to S^p_{X_2}}
\ov{\eqref{eq:pd-upper}}{\leq} \kappa_p.
\]
It follows that $2r_{\cb,p}-1 \leq \kappa_p$. In particular, we have $r_{\cb,p} \leq \frac{\kappa_p+1}{2} \lesssim p^*$.
\end{proof}

Let $K_p$ denote the best constant in the uniform inequality 
$$
\norm{M_{m \circ \dist_X}}_{\cb,S^p_X \to S^p_X} 
\leq K_p\norm{M_{\widetilde m}}_{\cb,\L^p(\T) \to \L^p(\T)}
$$ 
over countable trees and nonzero completely bounded even symbols. Let $K_p^+$ be the corresponding constant restricted to positive definite even symbols. We have proved that
\begin{equation}
\label{eq:Kp-bounds}
\frac{2}{\sin \frac{\pi}{p}}-1 
\leq 2r_{\cb,p}-1
\leq K_p^+ 
\leq \kappa_p 
\lesssim p^*,
\quad
K_p^+ 
\leq K_p 
\leq \kappa_p^2.
\end{equation}
Thus $K_p^+ \approx p^*$. In particular, the lower order $p^*$ for the general constant is already witnessed by a positive definite symbol. The general upper order $K_p \lesssim p^*$ suggested in \cite[Remark 1.3]{GPPW26} is not established by this argument.

\section{The local amenability modulus of free balls}
\label{sec:modulus}

In this section, we compute \eqref{eq:amenability-modulus} for the balls of a free group. Fix an integer $r \geq 2$ and put $q \ov{\mathrm{def}}{=} 2r-1$. We let $
W_n \ov{\mathrm{def}}{=} \{s \in \FF_r : |s| = n\}$ and $w_n \ov{\mathrm{def}}{=} |W_n|$. Thus 
\begin{equation}
\label{wn-values}
w_0 = 1 \quad \text{and} \quad w_n = (q+1)q^{n-1}, \quad n \geq 1.
\end{equation}
Let
\begin{equation}
\label{eq:modulus-radial-sums}
\chi_n \ov{\mathrm{def}}{=} \sum_{s \in W_n}\lambda_s,
\quad \text{and} \quad
A \ov{\mathrm{def}}{=} \frac{\chi_1}{2\sqrt q}.
\end{equation}
We also write $B_N \ov{\mathrm{def}}{=} \{s \in \FF_r : |s| \leq N\}$. The operator $A$ is selfadjoint. By Kesten's spectral description \cite{Kes59}, its spectrum is $
\sigma(A) = [-1,1]$. This spectral fact is the only part of Kesten's description used in this section. The full spectral measure of $A$ will be used in Section~\ref{sec:packets}.

\begin{thm}
\label{thm:modulus}
For any integer $N \geq 0$, we have
\begin{equation}
\label{eq:exact-modulus}
1-\delta_{\FF_r}(B_N) 
= q^{-\frac{N}{2}}\left(1+\frac{q-1}{q+1}N\right).
\end{equation}
\end{thm}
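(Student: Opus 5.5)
Write $\rho_N \coloneqq q^{-\frac{N}{2}}\bigl(1+\frac{q-1}{q+1}N\bigr)$. By Cohen's computation, $\rho_N$ is the norm of the spherical average $\frac{1}{w_N}\chi_N$. The proof has two halves: a lower bound for $\delta_{\FF_r}(B_N)$ from a Kesten-type averaging inequality, and a matching upper bound from explicit test vectors.

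\emph{Lower bound.} Apply \eqref{eq:intro-Kesten} with $\Omega = B_N$ and $\mu$ the uniform probability measure on $W_N \subset B_N$. This gives
\[
\delta_{\FF_r}(B_N) \geq 1-\frac{1}{w_N}\norm{\chi_N}.
\]
To be self-contained, I would rederive $\norm{\chi_N}/w_N = \rho_N$ from the spectral fact $\sigma(A) = [-1,1]$. The radial sums satisfy the recursion $\chi_1\chi_n = \chi_{n+1}+q\chi_{n-1}$ for $n \geq 2$, and $\chi_1^2 = \chi_2+(q+1)$. Hence $\chi_n = q^{\frac{n}{2}}P_n(A)$ for explicit polynomials $P_n$. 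In the variable $A = \cos\theta$ one has
\[
P_n(\cos\theta) = U_n(\cos\theta)-\tfrac{1}{q}U_{n-2}(\cos\theta),
\]
with $U_n$ the Chebyshev polynomials of the second kind, and for real $\theta$ this equals
\[
\frac{\sin((n+1)\theta)-q^{-1}\sin((n-1)\theta)}{\sin\theta}.
\]
The norm is $\sup_{x \in [-1,1]}|P_N(x)|$, and it is attained at $x = \pm 1$. At $x = 1$ the value is $(N+1)-\frac{N-1}{q}$. Dividing by $w_N = (q+1)q^{N-1}$ gives exactly $\rho_N$. The step showing that the supremum sits at the endpoint needs a short argument: write $P_N$ as a combination of $U_N$ and $U_{N-2}$ with nonnegative coefficients after using $U_N-U_{N-2} = 2T_N$, and use $|U_n(x)| \leq n+1$, with equality at $x = 1$.

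\emph{Upper bound.} For $0 < \epsilon$, take the radial vectors $\xi_M(s) = q^{-\frac{|s|}{2}}1_{\{|s|\leq M\}}$, normalized. Since $\xi_M$ is positive and radial, $\la\lambda_s\xi,\xi\ra$ is real, lies in $[0,1]$, and depends only on $|s|$. So it suffices to show that for each $n \leq N$,
\[
\la\lambda_s\xi_M,\xi_M\ra/\norm{\xi_M}^2 \to \text{some } c_n \geq \rho_N \quad \text{as } M \to \infty, \qquad |s| = n.
\]
The natural guess is $c_n = \rho_n$, which is decreasing in $n$. Indeed, $\la\lambda_s\xi,\xi\ra$ is constant on $W_n$, so it equals
\[
\frac{1}{w_n}\la\chi_n\xi,\xi\ra = \frac{q^{\frac{n}{2}}}{w_n}\la P_n(A)\xi,\xi\ra.
\]
As $M \to \infty$, the spectral measure of $A$ at the normalized $\xi_M$ concentrates at $1$: the vector $\xi_M$ is an approximate eigenvector, since $A\xi_M \approx \xi_M$ away from the sphere $W_M$ and near the origin, and those contributions are negligible because $\norm{\xi_M}^2 \approx \frac{q+1}{q}M$. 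Hence the limit equals $\frac{q^{\frac{n}{2}}}{w_n}P_n(1) = \rho_n \geq \rho_N$.

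This yields $1-\delta \geq \rho_N$, which combined with the lower bound gives \eqref{eq:exact-modulus}. The monotonicity $\rho_{n+1} \leq \rho_n$ reduces to the elementary inequality
\[
q^{-\frac{1}{2}}\bigl(1+c(n+1)\bigr) \leq 1+cn, \qquad c = \tfrac{q-1}{q+1}.
\]
Since $q \geq 3$, this holds because $(1+c(n+1))/(1+cn) \leq 1+c < \sqrt{q}$.

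The main obstacle I expect is making the approximate-eigenvector claim rigorous. I would compute $\la\chi_n\xi_M,\xi_M\ra$ directly by counting pairs $(t,st)$ with prescribed lengths. This is a finite combinatorial sum that is linear in $M$ to leading order, and the ratio can then be checked by hand.
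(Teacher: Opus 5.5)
Your proof is correct. Its architecture is the paper's: Kesten-type averaging over $W_N$ for one inequality, and the normalized truncations of $q^{-|s|/2}$ for the other. The only real divergence is how you evaluate $\lim_M\la\lambda_s\xi_M,\xi_M\ra$.

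The paper uses a direct count. For fixed $s\in W_n$ it sorts the vertices $t$ by their closest point on the geodesic from $e$ to $s$, and obtains $\la\lambda_s f_L,f_L\ra = q^{-n/2}\bigl(2+(n-1)\frac{q-1}{q}\bigr)L+C_n$. This gives at once that the value depends only on $n$ and that the limit is $\phi_n$, with no spectral input.

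Your approximate-eigenvector argument does not need the counting fallback you mention, because it is easy to make rigorous. For $f_M = q^{-|\cdot|/2}1_{B_M}$ one has $(Af_M)(s) = f_M(s)$ whenever $1\le|s|\le M-1$. So $Af_M-f_M$ is supported on $\{e\}\cup W_M\cup W_{M+1}$, with squared norm $\frac{(q-1)^2}{4q^2}+\frac{q+1}{2q}$, independent of $M$, while $\norm{f_M}^2 = 1+\frac{q+1}{q}M$. Since $\norm{A}\le 1$, you get $\norm{A^k\xi_M-\xi_M}\le k\norm{A\xi_M-\xi_M}\to 0$. Hence $\la P(A)\xi_M,\xi_M\ra\to P(1)$ for every polynomial $P$, and the limit is read off from the value at $1$ of your polynomial $P_n$, already computed for the first inequality.

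This route explains why the two bounds match: both are governed by the spectral point $1\in\sigma(A)$. The price is one fact you assert without proof: that $s\mapsto\la\lambda_s\xi,\xi\ra$ is constant on spheres for radial $\xi$. It does hold, for instance because $\la\lambda_s x\delta_e,x\delta_e\ra = \tau(\lambda_s xx^*)$, $xx^*$ lies in the span of the $\chi_n$, and $\tau(\lambda_s\chi_n) = 1_{\{|s|=n\}}$.

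Finally, there is a small slip in your endpoint argument. The useful decomposition is $P_N = (1-q^{-1})U_N+2q^{-1}T_N$, a nonnegative combination of $U_N$ and $T_N$ (not of $U_N$ and $U_{N-2}$). Combine it with $|U_N|\le N+1$ and $|T_N|\le 1$ on $[-1,1]$; after normalisation this is the paper's \eqref{eq:spherical-average}.
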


\begin{proof}
For $n \geq 1$, let
\[
P_n 
\ov{\mathrm{def}}{=} \frac{1}{w_n}\chi_n
\ov{\eqref{eq:modulus-radial-sums}}{=} \frac{1}{w_n}\sum_{s \in W_n}\lambda_s
\]
be the spherical averaging operator in the left regular representation, and set
\begin{equation}
\label{def-de-rho}
P_0 \ov{\mathrm{def}}{=} \Id
\quad \text{and} \quad
\rho \ov{\mathrm{def}}{=} \frac{2\sqrt q}{q+1}.
\end{equation}
Thus $P_1 = \rho A$. The identities
\[
\chi_1^2 = (q+1)\Id+\chi_2
\quad \text{and} \quad
\chi_1\chi_n = \chi_{n+1}+q\chi_{n-1}, \quad n \geq 2,
\]
give $
P_2 = \frac{4}{q+1}A^2-\frac{1}{q}\Id$ and
\[
P_{n+1}
= \frac{2}{\sqrt q}AP_n-\frac{1}{q}P_{n-1},
\quad n \geq 2.
\]
Consequently, $P_n = p_n(A)$, with $p_0 = 1$. Setting $R_n(t) \ov{\mathrm{def}}{=} q^{\frac n2}p_n(t)$, the preceding recurrence becomes
\[
R_{n+1}(t) = 2tR_n(t)-R_{n-1}(t), \quad n \geq 2.
\]
Moreover, the first two values are
\[
R_1(t) 
= \frac{2q}{q+1}t
\quad \text{and} \quad
R_2(t) 
= \frac{4q}{q+1}t^2-1.
\]
Let $T_n$ and $U_n$ denote the Chebyshev polynomials of the first and second kinds, respectively. Since
\[
T_1(t) = t,\quad T_2(t) = 2t^2-1,\quad
U_1(t) = 2t,\quad U_2(t) = 4t^2-1,
\]
and both families satisfy the same recurrence as $R_n$, comparison of the initial values yields
\begin{equation}
\label{eq:spherical-average}
p_n(\cos\theta)
= q^{-\frac{n}{2}}\left(
\frac{q-1}{q+1}U_n(\cos\theta)
+\frac{2}{q+1}\cos(n\theta)
\right),
\quad n \geq 1,
\end{equation}
where
\[
T_n(\cos\theta) = \cos(n\theta),
\quad \text{and} \quad
U_n(\cos\theta) = \frac{\sin((n+1)\theta)}{\sin\theta}.
\]
The endpoint values are understood by continuity. Moreover, we have
\[
U_n(\cos\theta)
= \sum_{k = 0}^n \e^{\i(n-2k)\theta},
\]
and therefore $|U_n(\cos\theta)| \leq n+1$. By continuity, the same estimate holds at $\theta = 0$ and $\theta = \pi$. Hence
\[
|U_n(t)| 
\leq n+1,
\quad t \in [-1,1].
\]
Since $\sigma(A) = [-1,1]$, the continuous functional calculus and \eqref{eq:spherical-average} yield
\begin{align*}
\MoveEqLeft
\norm{P_n}_{\ell^2_{\FF_r} \to \ell^2_{\FF_r}}
= \sup_{t \in [-1,1]}|p_n(t)|
\leq q^{-\frac{n}{2}}
\left(
\frac{q-1}{q+1}(n+1)+\frac{2}{q+1}
\right)
= q^{-\frac{n}{2}}
\left(1+\frac{q-1}{q+1}n\right).
\end{align*}
On the other hand, $1 \in \sigma(A)$ and $U_n(1) = n+1$. So
\[
p_n(1)
= q^{-\frac{n}{2}}
\left(1+\frac{q-1}{q+1}n\right).
\]
Therefore
\begin{equation}
\label{eq:spherical-average-norm}
\norm{P_n}_{\ell^2_{\FF_r} \to \ell^2_{\FF_r}}
= q^{-\frac{n}{2}}
\left(1+\frac{q-1}{q+1}n\right).
\end{equation}
The same formula also holds for $n = 0$. Formula \eqref{eq:spherical-average-norm} is due to Cohen \cite{Coh82}, see also \cite{Pyt81} and \cite[Chapter 3, p.~44]{FTP83}, where it is written as $\norm{\mu_n}_{\mathrm{C}^*_\lambda(\FF_r)} = \varphi_{\frac12}(n) = \big(1+\frac{r-1}{r}n\big)(2r-1)^{-\frac{n}{2}}$ with $\mu_n = \frac{1}{w_n}1_{W_n}$. Note that $\frac{r-1}{r} = \frac{q-1}{q+1}$. We have included the short argument for the convenience of the reader.
Since $W_N$ is invariant under inversion, the operator $P_N$ is selfadjoint. Hence, for any unit vector $\xi \in \ell^2_{\FF_r}$, we have
\begin{align*}
\MoveEqLeft
\sup_{s \in B_N}|1-\la\lambda_s\xi,\xi\ra_{\ell^2_{\FF_r}}|
\geq \frac{1}{w_N}\sum_{s \in W_N}
\big(1-\Re\la\lambda_s\xi,\xi\ra_{\ell^2_{\FF_r}}\big)
= 1-\Re\la P_N\xi,\xi\ra_{\ell^2_{\FF_r}}\\
&= 1-\la P_N\xi,\xi\ra_{\ell^2_{\FF_r}}
\geq 1-\norm{P_N}_{\ell^2_{\FF_r} \to \ell^2_{\FF_r}}
\ov{\eqref{eq:spherical-average-norm}}{=}
1-q^{-\frac{N}{2}}\left(1+\frac{q-1}{q+1}N\right).
\end{align*}
Taking the infimum over all unit vectors $\xi$ gives
\[
\delta_{\FF_r}(B_N)
\geq 1-q^{-\frac N2}\left(1+\frac{q-1}{q+1}N\right).
\]

For the reverse inequality, put
\[
\phi_n \ov{\mathrm{def}}{=} q^{-\frac{n}{2}}\left(1+\frac{q-1}{q+1}n\right),
\quad n \in \N_0.
\]
If $L \geq 1$ is an integer, we define
\begin{equation}
\label{eq:critical-vectors}
f_L(s) 
\ov{\mathrm{def}}{=} q^{-\frac{|s|}{2}}1_{B_L}(s)
\quad \text{and} \quad
\xi_L 
\ov{\mathrm{def}}{=} \frac{f_L}{\norm{f_L}_{\ell^2_{\FF_r}}}.
\end{equation}
Counting the spheres gives 
\begin{equation}
\label{eq:critical-mass}
\norm{f_L}_{\ell^2_{\FF_r}}^2
= \sum_{s \in B_L}q^{-|s|}
= 1+\sum_{n = 1}^L w_n q^{-n}
\ov{\eqref{wn-values}}{=} 1+\frac{q+1}{q}L.
\end{equation}
The Cayley graph of $\FF_r$ with respect to the free generators and their inverses is the homogeneous tree of degree $q+1$, and its graph distance is $\dist(s,t) = |s^{-1}t|$. Fix integers $1 \leq n \leq L$ and $s \in W_n$. Let $e = v_0,v_1,\ldots,v_n = s$ be the geodesic from $e$ to $s$. For $t \in \FF_r$, let $v_j$ be the vertex of this geodesic closest to $t$, and put $k \ov{\mathrm{def}}{=} \dist(t,v_j)$. Then
\[
|t| = j+k
\quad \text{and} \quad 
|s^{-1}t| = n-j+k.
\]
For $k \geq 1$, there are exactly $q^k$ such vertices $t$ if $j \in \{0,n\}$, and exactly $(q-1)q^{k-1}$ if $0 < j < n$. Indeed, the vertex $v_j$ has $q+1$ neighbours, and one of them if $j \in \{0,n\}$, two of them otherwise, lie on the geodesic. Moreover, $f_L(s^{-1}t)f_L(t) = q^{-\frac{n}{2}}q^{-k}$ if $k \leq L_j \ov{\mathrm{def}}{=} L-\max\{j,n-j\}$, and $f_L(s^{-1}t)f_L(t) = 0$ otherwise. Summing first over $k$ and then over $j$, and using $L_0 = L_n = L-n$, we obtain
\begin{align*}
\MoveEqLeft
\la\lambda_sf_L,f_L\ra_{\ell^2_{\FF_r}}
= \sum_{t \in \FF_r}f_L(s^{-1}t)f_L(t)
= q^{-\frac{n}{2}}\bigg(2(1+L-n)+\sum_{j=1}^{n-1}\Big(1+\frac{q-1}{q}L_j\Big)\bigg)\\
&= q^{-\frac{n}{2}}\Big(2+(n-1)\frac{q-1}{q}\Big)L+C_n,
\end{align*}
where $C_n$ does not depend on $L$. With \eqref{eq:critical-mass}, we deduce that
\begin{equation}
\label{eq:critical-coefficients}
\lim_{L \to \infty}\la\lambda_s\xi_L,\xi_L\ra_{\ell^2_{\FF_r}}
= q^{-\frac{n}{2}}\frac{2q+(n-1)(q-1)}{q+1}
= \phi_n.
\end{equation}
This also holds for $n = 0$, since $\xi_L$ is a unit vector. Note that, with $a_0 \ov{\mathrm{def}}{=} \frac{q-1}{q+1}$, we have
\[
\frac{\phi_{n+1}}{\phi_n}
= q^{-\frac{1}{2}}\frac{1+a_0(n+1)}{1+a_0n}
\leq q^{-\frac{1}{2}}(1+a_0)
\ov{\eqref{def-de-rho}}{=} \rho
< 1.
\]
So the sequence $(\phi_n)$ is positive and decreasing. Since $\xi_L$ is nonnegative and has norm one, the Cauchy--Schwarz inequality gives
\[
0 \leq \la\lambda_s\xi_L,\xi_L\ra_{\ell^2_{\FF_r}} \leq 1
\]
for any $s \in \FF_r$. Since $B_N$ is finite, we conclude that
\[
\lim_{L \to \infty}\sup_{s \in B_N}\big|1-\la\lambda_s\xi_L,\xi_L\ra_{\ell^2_{\FF_r}}\big|
\ov{\eqref{eq:critical-coefficients}}{=} \max_{0 \leq n \leq N}(1-\phi_n)
= 1-\phi_N.
\]
Using these unit vectors in \eqref{eq:amenability-modulus} proves the reverse inequality.
\end{proof}

\begin{cor}
\label{cor:spheres}
Let $E$ be a finite and nonempty subset of $\N_0$ and put $N \ov{\mathrm{def}}{=} \max E$. Then
\[
\delta_{\FF_r}\left(\bigcup_{n \in E}W_n\right)
= \delta_{\FF_r}(W_N)
= \delta_{\FF_r}(B_N)
= 1-q^{-\frac N2}\left(1+\frac{q-1}{q+1}N\right).
\]
\end{cor}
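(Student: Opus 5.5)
The plan is to squeeze $\delta_{\FF_r}(\bigcup_{n\in E}W_n)$ between two quantities that are both equal to $\delta_{\FF_r}(B_N)$. The modulus is monotone in the set: if $\Omega\subset\Omega'$, then $\sup_{s\in\Omega}\leq\sup_{s\in\Omega'}$ for each unit vector $\xi$, so $\delta_{\FF_r}(\Omega)\leq\delta_{\FF_r}(\Omega')$. Since $W_N\subset\bigcup_{n\in E}W_n\subset B_N$, this gives
\[
\delta_{\FF_r}(W_N)\leq\delta_{\FF_r}\Big(\bigcup_{n\in E}W_n\Big)\leq\delta_{\FF_r}(B_N)=1-q^{-\frac N2}\Big(1+\frac{q-1}{q+1}N\Big),
\]
where the equality is Theorem~\ref{thm:modulus}. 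It therefore suffices to prove the lower bound $\delta_{\FF_r}(W_N)\geq 1-\phi_N$, with $\phi_N$ as in the proof of Theorem~\ref{thm:modulus}.

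For this lower bound I will reuse the first half of the proof of Theorem~\ref{thm:modulus}. That argument only ever used the elements of the outer sphere $W_N$. For any unit vector $\xi$, averaging over $W_N$ gives
\[
\sup_{s\in W_N}|1-\la\lambda_s\xi,\xi\ra|\geq 1-\Re\la P_N\xi,\xi\ra\geq 1-\norm{P_N}.
\]
By Cohen's formula \eqref{eq:spherical-average-norm}, $\norm{P_N}=\phi_N$. Taking the infimum over $\xi$ yields $\delta_{\FF_r}(W_N)\geq 1-\phi_N$, and combining with the chain above gives all the claimed equalities.

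The case $N=0$ needs a separate check, because $P_0=\Id$. Here $W_0=\{e\}$, and every unit vector gives $|1-\la\xi,\xi\ra|=0$. Hence both moduli vanish, matching $1-\phi_0=0$.

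I do not expect a genuine obstacle. The only step that needs care is noticing that the lower-bound half of Theorem~\ref{thm:modulus} is really a statement about $W_N$ alone. Once that is observed, the rest is monotonicity of $\delta$ in the set.
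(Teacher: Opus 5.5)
Your proof is correct and is essentially the paper's argument: monotonicity of $\delta_{\FF_r}$ along $W_N \subset \bigcup_{n\in E} W_n \subset B_N$, combined with the averaging bound $\delta_{\FF_r}(W_N)\geq 1-\norm{P_N}$, Cohen's formula \eqref{eq:spherical-average-norm} and Theorem~\ref{thm:modulus}. Your separate check of $N=0$ is harmless but not needed: there $P_0=\Id$ and $\phi_0=1$, so the general averaging bound already gives the trivial inequality $\delta_{\FF_r}(W_0)\geq 0$.
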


\begin{proof}
Averaging over $W_N$ as in the proof of Theorem~\ref{thm:modulus} gives
$\delta_{\FF_r}(W_N) \geq 1-\norm{P_N}$.
Theorem~\ref{thm:modulus} and \eqref{eq:spherical-average-norm} show that the right-hand side is $\delta_{\FF_r}(B_N)$. The conclusion follows from the inclusions
$W_N \subset \bigcup_{n \in E}W_n \subset B_N$
and the monotonicity of the local amenability modulus.
\end{proof}

\begin{lemma}
\label{lem:modulus-positive-vectors}
Let $G$ be a discrete group and let $\Omega \subset G$ be finite and nonempty. Then
\[
\delta_G(\Omega)
= 1-\sup_{\norm{\xi}_{\ell^2_G} = 1}
\inf_{s \in \Omega}\Re\la\lambda_s\xi,\xi\ra_{\ell^2_G}.
\]
Moreover, the infimum defining $\delta_G(\Omega)$ may be restricted to nonnegative unit vectors.
\end{lemma}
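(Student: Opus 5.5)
We prove the two assertions of Lemma~\ref{lem:modulus-positive-vectors} separately. Set
\[
\beta_G(\Omega) \ov{\mathrm{def}}{=} \sup_{\norm{\xi} = 1}\inf_{s \in \Omega}\Re\la\lambda_s\xi,\xi\ra.
\]
The plan is to show the chain
\[
\delta_G(\Omega) \geq 1-\beta_G(\Omega) \geq \delta_G^+(\Omega) \geq \delta_G(\Omega),
\]
where $\delta_G^+(\Omega)$ denotes the infimum in \eqref{eq:amenability-modulus} restricted to nonnegative unit vectors. The last inequality is trivial, since restricting the infimum can only increase it. Once the chain is established, both claims follow at once.

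\textbf{First inequality.} For any unit vector $\xi$ and any $s \in \Omega$, we have
\[
|1-\la\lambda_s\xi,\xi\ra| \geq 1-\Re\la\lambda_s\xi,\xi\ra.
\]
Taking the supremum over $s \in \Omega$ gives
\[
\sup_{s \in \Omega}|1-\la\lambda_s\xi,\xi\ra| \geq 1-\inf_{s \in \Omega}\Re\la\lambda_s\xi,\xi\ra \geq 1-\beta_G(\Omega).
\]
Taking the infimum over $\xi$ then yields $\delta_G(\Omega) \geq 1-\beta_G(\Omega)$.

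\textbf{Second inequality.} This is the key step, and it replaces $\xi$ by $|\xi|$. Given a unit vector $\xi$, the vector $\eta \ov{\mathrm{def}}{=} |\xi|$ is also a unit vector, and it is nonnegative. For $s \in \Omega$, the triangle inequality gives
\[
\la\lambda_s\eta,\eta\ra = \sum_{t}|\xi(s^{-1}t)||\xi(t)| \geq \Big|\sum_t \xi(s^{-1}t)\overline{\xi(t)}\Big| \geq \Re\la\lambda_s\xi,\xi\ra.
\]
The number $\la\lambda_s\eta,\eta\ra$ is real and lies in $[0,1]$ by Cauchy--Schwarz. Hence
\[
|1-\la\lambda_s\eta,\eta\ra| = 1-\la\lambda_s\eta,\eta\ra \leq 1-\Re\la\lambda_s\xi,\xi\ra.
\]
Taking the supremum over $s \in \Omega$ gives
\[
\sup_{s \in \Omega}|1-\la\lambda_s\eta,\eta\ra| \leq 1-\inf_{s \in \Omega}\Re\la\lambda_s\xi,\xi\ra.
\]
Consequently $\delta_G^+(\Omega) \leq 1-\inf_{s \in \Omega}\Re\la\lambda_s\xi,\xi\ra$ for every unit vector $\xi$. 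Taking the infimum of the right-hand side over $\xi$ gives $\delta_G^+(\Omega) \leq 1-\beta_G(\Omega)$.

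The chain therefore collapses to equalities. This gives the formula $\delta_G(\Omega) = 1-\beta_G(\Omega)$ and shows that the infimum may be restricted to nonnegative unit vectors.

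The only delicate point is the second step. One must observe that passing to $|\xi|$ simultaneously makes every coefficient real and nonnegative and can only increase it. That observation is exactly what turns the modulus $|1-\cdot|$ into $1-\Re(\cdot)$. Everything else is bookkeeping with suprema and infima over the finite set $\Omega$.
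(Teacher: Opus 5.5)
Your proof is correct and follows essentially the same route as the paper: you replace $\xi$ by $\eta = |\xi|$, use the triangle inequality and Cauchy--Schwarz to get $\Re\la\lambda_s\xi,\xi\ra \leq \la\lambda_s\eta,\eta\ra \in [0,1]$, and combine this with $|1-z| \geq 1-\Re z$. The paper writes this as one pointwise chain of inequalities for each unit vector $\xi$ and then takes infima, which is the same argument as your cycle $\delta_G(\Omega) \geq 1-\beta_G(\Omega) \geq \delta_G^+(\Omega) \geq \delta_G(\Omega)$.
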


\begin{proof}
Let $\xi \in \ell^2_G$ be a unit vector and put $\eta \ov{\mathrm{def}}{=} |\xi|$. Then $\eta$ is a nonnegative unit vector. For any $s \in G$, we have
\[
\Re\la\lambda_s\xi,\xi\ra
\leq |\la\lambda_s\xi,\xi\ra|
\leq \la\lambda_s\eta,\eta\ra
\leq 1.
\]
Since $\la\lambda_s\eta,\eta\ra \geq 0$, we obtain
\[
\sup_{s \in \Omega}|1-\la\lambda_s\eta,\eta\ra|
\leq \sup_{s \in \Omega}\big(1-\Re\la\lambda_s\xi,\xi\ra\big)
\leq \sup_{s \in \Omega}|1-\la\lambda_s\xi,\xi\ra|.
\]
Taking infima proves both assertions.
\end{proof}

\begin{prop}
\label{prop:finite-support-ratio}
Let $G$ be a discrete group and let $\Omega$ be a nonempty finite subset of $G$. For $1 < p < \infty$, we have
\[
\mathcal{R}_{G,p}(\Omega)
\leq |\Omega|^{|\frac{1}{p}-\frac{1}{2}|}.
\]
\end{prop}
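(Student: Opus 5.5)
We bound the numerator and the denominator of \eqref{eq:local-ratio} separately by $\ell^2$-type quantities of the symbol, and interpolate between $p=2$ and the endpoints. Fix $a$ supported in $\Omega$, $a\neq 0$. For the denominator we use the lower bound $\norm{M_a^\HS}_{\cb,S^p_G\to S^p_G}\geq \norm{a}_{\ell^\infty_G}$, obtained by testing on matrix units. So it suffices to prove
\[
\norm{M_a}_{\cb,\L^p(\VN(G))\to\L^p(\VN(G))}\leq |\Omega|^{|\frac1p-\frac12|}\norm{a}_{\ell^\infty_G}.
\]

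The key step is a bound on the two endpoints, followed by complex interpolation. At $p=2$, $M_a$ acts on $\L^2(\M_k\otvn\VN(G))=S^2_k(\ell^2_G)$ diagonally, so $\norm{M_a}_{\cb,\L^2}=\norm{a}_{\ell^\infty}$. At $p=1$ (and by duality at $p=\infty$), write $M_a(x)=\sum_{s\in\Omega}a(s)\,\tau(x\lambda_s^*)\lambda_s$. At every matrix level, the map $x\mapsto (\Id\ot\tau)(x(1\ot\lambda_s^*))\ot\lambda_s$ is a complete contraction on $\L^1$, since $\tau(\cdot\,\lambda_s^*)$ is a contractive normal functional. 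The triangle inequality then gives $\norm{M_a}_{\cb,\L^1\to\L^1}\leq \sum_{s}|a(s)|\leq|\Omega|\norm{a}_{\ell^\infty}$. The same bound holds on $\L^\infty$ (or on the operator space $\VN(G)$ itself), since each $x\mapsto \tau(x\lambda_s^*)\lambda_s$ is completely contractive there as well.

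To interpolate, we keep the symbol fixed and interpolate the fixed operator $M_a$ between $\L^2$ and $\L^1$ (respectively $\L^\infty$). This uses the completely isometric identification $\L^p(\M_k\otvn\VN(G))=[\L^{p_0},\L^{p_1}]_\theta$, with $\frac1p=\frac{1-\theta}{2}+\frac{\theta}{1}$ for $1<p<2$, and the analogue with $\L^\infty$ for $p>2$. The Riesz--Thorin type bound then gives $\norm{M_a}_{\cb,\L^p}\leq \norm{a}_\infty^{1-\theta}(|\Omega|\norm{a}_\infty)^{\theta}$, where $\theta=\frac2p-1=2(\frac1p-\frac12)$ for $p<2$ and $\theta=1-\frac2p$ for $p>2$. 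Since the finite-dimensional range consists of trigonometric polynomials, no extension issues arise.

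The main obstacle is the exponent. Interpolation as above yields $|\Omega|^{|1-\frac2p|}$, twice the claimed exponent. To obtain $|\Omega|^{|\frac1p-\frac12|}$, we replace the crude $\L^1$ bound by a Cauchy--Schwarz-type estimate. The operator $M_a$ factors through the finite-dimensional span $E_\Omega=\Span\{\lambda_s:s\in\Omega\}$, and on $E_\Omega$ we compare the $\L^p$ norms with the $\L^2$ norm. For $p\geq2$ and $x=\sum c_s\lambda_s$ (with matrix coefficients), we have
\[
\norm{x}_p\leq\norm{x}_\infty\leq\sum\norm{c_s}\leq |\Omega|^{\frac12}\Big(\sum\norm{c_s}^2\Big)^{\frac12},
\]
which is a Sidon-type $\L^\infty$--$\L^2$ comparison. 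Interpolating this inclusion bound $\L^2\cap E_\Omega\to\L^\infty$ (constant $|\Omega|^{1/2}$) against the identity on $\L^2$ gives $\norm{x}_p\leq|\Omega|^{\frac12-\frac1p}\norm{x}_2$. We then write $\norm{M_ax}_p\leq|\Omega|^{\frac12-\frac1p}\norm{M_ax}_2\leq|\Omega|^{\frac12-\frac1p}\norm{a}_\infty\norm{\Pi x}_2$, where $\Pi$ is the orthogonal projection onto $E_\Omega$. Finally $\norm{\Pi x}_2\leq\norm{x}_p$, since $\norm{\Pi x}_2^2=\tau((\Pi x)^*x)\leq\norm{\Pi x}_{p'}\norm{x}_p$ and $\norm{\Pi x}_{p'}\leq\norm{\Pi x}_2$ for $p'\leq2$ in a finite trace. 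This gives the exponent $\frac12-\frac1p$ at all matrix levels, provided the matrix-level versions of these inequalities use the trace $\Tr\ot\tau$ consistently; that verification is the delicate point. The case $p<2$ then follows by duality, since the adjoint of $M_a$ is $M_{\bar a}$ on $\L^{p'}$.
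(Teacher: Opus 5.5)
Your first interpolation scheme is sound but, as you note, gives $|\Omega|^{|1-\frac2p|}$. The refinement meant to fix the exponent has a genuine gap, and it sits exactly at the point you flag as delicate. It proves only the ordinary bound $\norm{M_a}_{\L^p\to\L^p}\leq|\Omega|^{\frac12-\frac1p}\norm{a}_{\ell^\infty_G}$, whereas \eqref{eq:local-ratio} has the completely bounded norm in the numerator. You factor $\Id_{\M_k}\ot M_a$ as $\L^p\to\L^2\to\L^2\to\L^p$, and any such factorization at matrix level $k$ costs $k^{\frac12-\frac1p}$.

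With $\Tr\ot\tau$, the step $\norm{\Pi x}_2\leq\norm{x}_p$ fails. For $x=1_{\M_k}\ot\lambda_{s_0}$ with $s_0\in\Omega$, one has $\Pi x=x$ and $\norm{x}_2/\norm{x}_p=k^{\frac12-\frac1p}$. With the normalized trace $\frac1k\Tr\ot\tau$, that step holds, but then the comparison $\norm{y}_p\leq|\Omega|^{\frac12-\frac1p}\norm{y}_2$ on $\M_k\ot E_\Omega$ fails: $y=e_{11}\ot\lambda_{s_0}$ has ratio $k^{\frac12-\frac1p}$. Rescaling the trace multiplies the two best constants by reciprocal powers of the scale. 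Their product is therefore independent of the normalization and is at least $k^{\frac12-\frac1p}$, so no consistent choice of trace rescues the argument.

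The missing idea is a better completely bounded estimate at the endpoint $p=\infty$. The Cauchy--Schwarz inequality should be applied to the symbol, not to the coefficients of the input. The paper writes $M_a=(\Id\ot\omega_a)\circ\Delta_G$. Here $\Delta_G\co\VN(G)\to\VN(G)\otvn\VN(G)$ is the normal $*$-homomorphism with $\Delta_G(\lambda_s)=\lambda_s\ot\lambda_s$, hence completely contractive. The map $\omega_a(y)=\la y\delta_e,\overline a\ra$ is a vector functional with $\omega_a(\lambda_s)=a(s)$ and $\norm{\omega_a}\leq\norm{a}_{\ell^2_G}$. The slice map $\Id\ot\omega_a$ has completely bounded norm $\norm{\omega_a}$, so
$\norm{M_a}_{\cb,\VN(G)\to\VN(G)}\leq\norm{a}_{\ell^2_G}\leq|\Omega|^{\frac12}\norm{a}_{\ell^\infty_G}$.
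This replaces your $\ell^1$ bound $\sum_s|a(s)|$. Interpolating this fixed operator against the $p=2$ bound $\norm{a}_{\ell^\infty_G}$, exactly as in your first scheme, gives $|\Omega|^{\frac12-\frac1p}$ for $p\geq2$. Duality with $M_{\overline a}$ then handles $p<2$.
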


\begin{proof}
Let $a \co G \to \mathbb{C}$ be a nonzero function supported in the subset $\Omega$. Regard $\overline a$ as a vector in $\ell^2_G$ and define $\omega_a(x) = \la x\delta_e,\overline a\ra$. Then $\omega_a(\lambda_s) = a(s)$ and $\norm{\omega_a} \leq \norm{a}_{\ell^2_G}$. Let $\Delta_G \co \VN(G) \to \VN(G)\otvn\VN(G)$ be the normal unital $*$-homomorphism determined by $\Delta_G(\lambda_s) = \lambda_s\ot\lambda_s$. Since $M_a = (\Id\ot\omega_a)\Delta_G$ and the slice map $\Id\ot\omega_a$ is completely bounded with norm $\norm{\omega_a}$, we obtain
\[
\norm{M_a}_{\cb,\VN(G) \to \VN(G)}
\leq \norm{a}_{\ell^2_G}
\leq |\Omega|^{\frac{1}{2}}\norm{a}_{\ell^\infty_G}.
\]
At $p = 2$, the completely bounded Fourier multiplier norm equals $\norm{a}_{\ell^\infty_G}$. Interpolation for $p \geq 2$, followed by trace duality with the conjugate symbol for $p < 2$, gives
\[
\norm{M_a}_{\cb,\L^p(\VN(G)) \to \L^p(\VN(G))}
\leq |\Omega|^{|\frac{1}{p}-\frac{1}{2}|}\norm{a}_{\ell^\infty_G}.
\]
Testing the Herz--Schur multiplier $M_a^\HS$ on matrix units gives $\norm{M_a^\HS}_{\cb,S^p_G \to S^p_G} \geq \norm{a}_{\ell^\infty_G}$. Dividing and taking the supremum proves the assertion.
\end{proof}

\begin{cor}
\label{cor:PRS-free}
For any even integer $p \geq 2$ and any $N \in \N_0$, we have
\[
\mathcal{R}_{\FF_r,p}(B_N) 
\leq \frac{q^{\frac{pN}{4}}}{1+\frac{q-1}{q+1}\cdot\frac{pN}{2}}.
\]
\end{cor}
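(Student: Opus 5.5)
We will obtain Corollary~\ref{cor:PRS-free} by substituting the exact value \eqref{eq:exact-modulus} into the local transference estimate of \cite[Theorem 1.3]{PRS22}. The first step is to recall the precise form of that estimate for even integers $p$ and a finite set $\Omega$. We take it in the form
\[
\norm{M_a}_{\cb,\L^p(\VN(G)) \to \L^p(\VN(G))}
\leq \frac{1}{1-\delta_G(\Omega_p)}\,\norm{M_a^\HS}_{\cb,S^p_G \to S^p_G},
\quad \supp a \subset \Omega,
\]
where $\Omega_p$ denotes the enlarged set $\Omega^{p/2}$ (products of $\frac p2$ elements of $\Omega$). This set appears because the proof tests the near-invariant vector against the $\frac p2$-fold products arising from expanding $\norm{x}_p^p = \tau((x^*x)^{p/2})$. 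Taking the supremum over $a$ then gives
\[
\mathcal{R}_{G,p}(\Omega) \leq \bigl(1-\delta_G(\Omega_p)\bigr)^{-1}.
\]
The main point to check is that the normalisation in \cite{PRS22} matches ours. By Lemma~\ref{lem:modulus-positive-vectors}, their variant of the modulus, with nonnegative vectors and real parts, coincides with $\delta_G$. So the only real obstacle is bookkeeping: identifying exactly which enlarged set enters. We also note that the bound is plausible at $p=2$, where $\Omega_2 = \Omega$ and the ratio is $1$.

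Next we specialise to $G = \FF_r$ and $\Omega = B_N$. Since the word length is subadditive, $B_N^{p/2} = B_{pN/2}$, with equality because $B_N$ contains $e$ and all reduced words of length at most $N$. Here $\frac{pN}{2}$ is an integer because $p$ is even. Theorem~\ref{thm:modulus} applied with $\frac{pN}{2}$ in place of $N$ gives
\[
1-\delta_{\FF_r}(B_{pN/2}) = q^{-\frac{pN}{4}}\Bigl(1+\frac{q-1}{q+1}\cdot\frac{pN}{2}\Bigr).
\]
Inverting this yields exactly the claimed bound.

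If instead the PRS estimate is stated with the modulus raised to a power, as $(1-\delta_G(\Omega))^{-p/2}$, we would need the inequality $(1-\delta(B_N))^{p/2} \geq 1-\delta(B_{pN/2})$, which fails to give the stated form. The stated formula therefore indicates that the enlarged-set version is the correct reading. No monotonicity argument is then needed beyond Corollary~\ref{cor:spheres}, which is consistent with the identification above.
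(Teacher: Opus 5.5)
Your proposal is correct and follows the paper's proof: apply \cite[Theorem 1.3]{PRS22} with $\Sigma=\FF_r$, identify the two moduli via Lemma~\ref{lem:modulus-positive-vectors}, note that the relevant enlarged set is $B_{\frac{pN}{2}}$, and substitute Theorem~\ref{thm:modulus}. One small bookkeeping correction: in the paper's reading of \cite{PRS22}, the enlarged set is the product of $\frac{p}{2}$ factors alternating between $\Omega$ and $\Omega^{-1}$, not $\Omega^{p/2}$; this makes no difference here because $B_N^{-1}=B_N$ and $B_NB_M=B_{N+M}$.
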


\begin{proof}
Since $B_N^{-1} = B_N$ and $B_NB_M = B_{N+M}$ for any $N,M \in \N_0$, the product of $\frac{p}{2}$ factors alternating between $B_N$ and $B_N^{-1}$ is the ball $B_{\frac{pN}{2}}$. By Lemma~\ref{lem:modulus-positive-vectors}, the modulus used in \cite[Section 1]{PRS22}, with $\Sigma = \FF_r$, coincides with $\delta_{\FF_r}$. Hence \cite[Theorem 1.3]{PRS22}, applied with $\Sigma = \FF_r$, together with Theorem~\ref{thm:modulus}, yields
\[
\mathcal{R}_{\FF_r,p}(B_N) 
\leq \frac{1}{1-\delta_{\FF_r}(B_{\frac{pN}{2}})}
\ov{\eqref{eq:exact-modulus}}{=} \frac{q^{\frac{pN}{4}}}{1+\frac{q-1}{q+1}\cdot\frac{pN}{2}}.
\]
\end{proof}

\begin{remark} \normalfont
\label{rem:PRS-versus-trivial}
Fix an even integer $p \geq 2$ and put $\alpha_p \ov{\mathrm{def}}{=} 1-\frac{2}{p}$. Since $q \geq 3$, we have $|B_N| = 1+(q+1)\frac{q^N-1}{q-1} \leq 2q^N$. Hence Proposition~\ref{prop:finite-support-ratio} gives
\[
\mathcal{R}_{\FF_r,p}(B_N)
\leq 2^{\frac{\alpha_p}{2}}q^{\frac{\alpha_p N}{2}}.
\]
Moreover, we have
\[
\frac{p}{4}-\frac{\alpha_p}{2}
= \frac{p^2-2p+4}{4p}
> 0.
\]
Hence, for fixed $p$ and $q$, the upper bound in Corollary~\ref{cor:PRS-free} is asymptotically larger than the cardinality bound as $N \to \infty$. Since Theorem~\ref{thm:modulus} computes the modulus exactly, this discrepancy comes from the particular local transference estimate being applied, rather than from a loss in estimating the modulus.
\end{remark}

\section{Radial polynomials and quantitative Fourier--Schur separation}
\label{sec:packets}

We keep the notation of Section~\ref{sec:modulus}. Here $q = 2r-1$, $W_n$, $w_n = |W_n|$, $\chi_n$ and $A = \frac{\chi_1}{2\sqrt q}$, see \eqref{wn-values} and \eqref{eq:modulus-radial-sums}. Put $\gamma \ov{\mathrm{def}}{=} \frac{q}{q+1}$. Thus $\frac{3}{4} \leq \gamma < 1$. The identities $\chi_1^2 = (q+1)\Id+\chi_2$ and $\chi_1\chi_n = \chi_{n+1}+q\chi_{n-1}$ for $n \geq 2$ show that the radial von Neumann subalgebra of $\VN(\FF_r)$ is the abelian von Neumann algebra generated by $A$. Throughout this section, $N$ denotes a positive integer. For $1 < p < \infty$, we put $\alpha_p \ov{\mathrm{def}}{=} |1-\frac{2}{p}|$.

We use the spectral description originating in \cite{Kes59}, see also \cite[Section 2]{GPPW26}. The spectrum of $A$ is the interval $[-1,1]$, and its spectral measure for $\tau$ is the probability measure
\begin{equation}
\label{eq:Kesten-measure}
\d\nu_\gamma(x) = \frac{2\gamma}{\pi}\frac{\sqrt{1-x^2}}{1-4\gamma(1-\gamma)x^2}\d x,
\quad -1 \leq x \leq 1.
\end{equation}
Under the spectral isomorphism, $\frac{1}{\sqrt{w_n}}\chi_n$ corresponds to a real polynomial $Q_n$. Its first values and recurrence are
\begin{equation}
\label{eq:Q-recurrence}
Q_0 = 1,\quad Q_1(x) = 2\sqrt\gamma x,\quad
Q_2(x) = 4\sqrt\gamma x^2-\frac{1}{\sqrt\gamma},\quad
Q_{n+1}(x) = 2xQ_n(x)-Q_{n-1}(x),\quad n \geq 2.
\end{equation}
These polynomials are orthonormal in $\L^2(\nu_\gamma)$. A radial Fourier multiplier with profile $m$ acts by $Q_n \mapsto m(n)Q_n$. For any finite linear combination $f$ of the $Q_n$, its norm in the radial part of $\L^p(\VN(\FF_r))$ is exactly $\norm{f}_{\L^p(\nu_\gamma)}$.

For $0 < \theta < \pi$, put $x \ov{\mathrm{def}}{=} \cos\theta$. The recurrence gives
\begin{equation}
\label{eq:Q-trigonometric}
Q_n(\cos\theta) = \frac{a\sin((n+1)\theta)-b\sin((n-1)\theta)}{\sin\theta},
\quad a = \sqrt\gamma,\quad b = \frac{1-\gamma}{\sqrt\gamma},\quad n \geq 1.
\end{equation}
For example, this identity follows by comparing \eqref{eq:Q-recurrence} with the recurrence of the Chebyshev polynomials of the second kind, with $U_{-1} = 0$. At the endpoints we use the continuous polynomial values. Notice that $a-b = \frac{2\gamma-1}{\sqrt\gamma}$ is bounded below by a positive numerical constant for $\gamma \in [\frac{3}{4},1]$.

The angular form of \eqref{eq:Kesten-measure} is
\begin{equation}
\label{eq:angular-measure}
\d\nu_\gamma(\cos\theta) = w_\gamma(\theta)\d\theta,
\quad
w_\gamma(\theta) = \frac{2\gamma}{\pi}\frac{\sin^2\theta}{1-4\gamma(1-\gamma)\cos^2\theta}.
\end{equation}
Since $0 \leq 4\gamma(1-\gamma) \leq \frac{3}{4}$, there are numerical constants $c,C > 0$ such that
\begin{equation}
\label{eq:angular-comparison}
c\sin^2\theta \leq w_\gamma(\theta) 
\leq C\sin^2\theta
\end{equation}
for any $\gamma \in [\frac{3}{4},1]$ and any $\theta \in [0,\pi]$. This uniformity will make all our lower estimates independent of the free rank.

We first record a discrete Fourier estimate. In the following lemma, the distance to $2\pi\Z$ is the usual distance in $\R$.

\begin{lemma}
\label{lem:smooth-kernel}
Let $\psi \in \C_c^\infty(\R)$ and define
$F_N(t) = \sum_{k \in \Z}\psi(\tfrac{k}{N})\e^{\i kt}$ for integers $N \geq 1$. For any integers $j,L \geq 0$, there is a constant $C_{\psi,j,L}$ such that
\begin{equation}
\label{eq:smooth-kernel}
|F_N^{(j)}(t)| \leq C_{\psi,j,L}N^{j+1}
\bigl(1+N\dist(t,2\pi\Z)\bigr)^{-L},\quad t \in \R.
\end{equation}
In particular, we have $\sup_{N \geq 1} \int_0^{2\pi}|F_N(t)| \d t < \infty$.
\end{lemma}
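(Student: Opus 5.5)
The plan is to write $F_N$ via Poisson summation and then exploit the rapid decay of the Fourier transform of a Schwartz function. Put $\widehat\psi(\xi) = \int_\R \psi(x)\e^{-\i x\xi}\d x$, which is a Schwartz function since $\psi \in \C_c^\infty(\R)$. The function $k \mapsto \psi(\frac{k}{N})\e^{\i kt}$ is the restriction to $\Z$ of $g(x) = \psi(\frac{x}{N})\e^{\i xt}$. The Poisson summation formula $\sum_k g(k) = \sum_\ell \widehat g(2\pi\ell)$ then gives
\[
F_N(t) = N\sum_{\ell \in \Z}\widehat\psi\bigl(N(2\pi\ell - t)\bigr),
\]
and the series converges absolutely and uniformly on compact sets. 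Differentiating term by term, which is justified by the same uniform convergence applied to $\widehat\psi^{(j)}$, we get
\[
F_N^{(j)}(t) = (-1)^jN^{j+1}\sum_{\ell}\widehat\psi^{(j)}\bigl(N(2\pi\ell-t)\bigr).
\]

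Both sides of \eqref{eq:smooth-kernel} are $2\pi$-periodic in $t$, so we may assume $|t| = \dist(t,2\pi\Z) \leq \pi$. For $\ell \neq 0$ we have $|2\pi\ell - t| \geq \pi|\ell|$. We will use the Schwartz bound $|\widehat\psi^{(j)}(\xi)| \leq C(1+|\xi|)^{-L-2}$.

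For $\ell = 0$ this bound gives at most $C(1+N|t|)^{-L}$ directly. For the terms with $\ell \neq 0$ we have
\[
\sum_{\ell\neq0}(1+N\pi|\ell|)^{-L-2} \leq C'N^{-L-2} \leq C'(1+N|t|)^{-L},
\]
where the last inequality uses $N \geq 1$ and $1+N|t| \leq (1+\pi)N$. Multiplying by $N^{j+1}$ yields \eqref{eq:smooth-kernel}.

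For the final assertion, apply the estimate with $j=0$ and $L=2$. This gives
\[
\int_{-\pi}^{\pi}|F_N| \leq C\int_\R N(1+N|t|)^{-2}\d t = 2C,
\]
which is uniform in $N$.

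The only point requiring care is justifying Poisson summation and the term-by-term differentiation. Since $g$ is compactly supported and smooth, both are standard. Alternatively, one can avoid Poisson summation and integrate by parts in the summation index using finite differences: this is $L$-fold Abel summation against $(\e^{\i t}-1)^{-L}$, giving the decay for $N|t| \geq 1$, while the trivial bound $|F_N^{(j)}| \leq CN^{j+1}$ handles $N|t| \leq 1$. I would use Poisson summation as the primary route.
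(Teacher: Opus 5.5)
Your proof is correct, but it takes a different route from the paper's. The paper works on the coefficient side, essentially the alternative you sketch at the end. It gets the trivial bound $|F_N^{(j)}(t)|\leq CN^{j+1}$ by summing $|c_k|$ with $c_k=(\i k)^j\psi(\frac{k}{N})$. It then uses the summation-by-parts identity $(1-\e^{\i t})^LF_N^{(j)}(t)=\sum_k(\Delta^Lc)_k\e^{\i kt}$ and bounds $\sum_k|(\Delta^Lc)_k|\leq CN^{j+1-L}$, writing each $L$-th difference as an integral of the $L$-th derivative of $x\mapsto(\i x)^j\psi(\frac{x}{N})$, with only $O(N)$ indices contributing. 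It concludes with $|1-\e^{\i t}|\approx\dist(t,2\pi\Z)$.

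Your Poisson summation instead produces the exact identity $F_N^{(j)}(t)=(-1)^jN^{j+1}\sum_{\ell}\widehat\psi^{(j)}(N(2\pi\ell-t))$, and your checks are sound. For $|t|\leq\pi$ and $\ell\neq0$ you have $|2\pi\ell-t|\geq\pi|\ell|$, and $N^{-L-2}\leq(1+\pi)^L(1+N|t|)^{-L}$, with the factor $(1+\pi)^L$ absorbed into $C'$.

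What your route buys is more information. On $[-\pi,\pi]$ it shows $F_N^{(j)}(t)=(-1)^jN^{j+1}\widehat\psi^{(j)}(-Nt)+O_{\psi,j,M}(N^{-M})$ for every $M$, so the kernel is a rescaled copy of $\widehat\psi$ up to a negligible periodization error. The paper's route is more elementary: it never leaves finite sums, and so needs neither the Fourier transform on $\R$ nor a justification of Poisson summation and term-by-term differentiation.

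Both approaches give constants controlled by the support of $\psi$ and finitely many sup norms of its derivatives. That is what Lemma~\ref{lem:packet-norms} later needs for uniformity over the family $\psi(y)\e^{uy}$, $u\in[-1,1]$. In your case this holds because $\sup_\xi(1+|\xi|)^{L+2}|\widehat\psi^{(j)}(\xi)|$ is bounded by $L^1$ norms of derivatives of $x^j\psi$ up to order $L+2$.
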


\begin{proof}
The bound $|F_N^{(j)}(t)| \leq C N^{j+1}$ follows by summing the absolute values of its coefficients. Put $c_k \ov{\mathrm{def}}{=} (\i k)^j\psi(\frac{k}{N})$, and let $\Delta c_k \ov{\mathrm{def}}{=} c_k-c_{k-1}$. Discrete summation by parts gives
\[
(1-\e^{\i t})^L F_N^{(j)}(t) 
= \sum_{k \in \Z}(\Delta^L c)_k\e^{\i kt}.
\]
By expressing each finite difference as an integral of a derivative, we obtain $\sum_k|(\Delta^L c)_k| \leq C N^{j+1-L}$. Indeed, the $L$-th derivative of $x \mapsto (\i x)^j\psi(\frac{x}{N})$ is bounded by $CN^{j-L}$, and only $O(N)$ integer indices contribute. The constants are permitted to depend on $j,L$ and the support of $\psi$. Since $|1-\e^{\i t}|$ is comparable to $\operatorname{dist}(t,2\pi\Z)$, combining this estimate with the first bound proves \eqref{eq:smooth-kernel}. Integrating \eqref{eq:smooth-kernel} with $j = 0$ and $L = 2$ proves the last assertion.
\end{proof}

Fix for the rest of this section a nonnegative, nonzero function $\psi \in \C_c^\infty((\frac{1}{4},\frac{3}{4}))$. For an angle $0 < \vartheta < \pi$, define the two finite radial sums
\begin{equation}
\label{eq:packets}
u_N(x)
 = \sum_{k \geq 1}\psi(\tfrac{k}{N})Q_k(x),
\quad
v_{N,\vartheta}(x) 
= \sum_{k \geq 1}\psi(\tfrac{k}{N})\e^{-\i k\vartheta}Q_k(x).
\end{equation}

\begin{lemma}
\label{lem:packet-norms}
Let $1 < p < \infty$ and $0 < \vartheta < \pi$. For all sufficiently large $N$, we have
\begin{equation}
\label{eq:packet-norms}
\norm{u_N}_{\L^p(\nu_\gamma)} 
\approx_{p,\psi} N^{2-\frac{3}{p}},
\quad
\norm{v_{N,\vartheta}}_{\L^p(\nu_\gamma)} 
\approx_{p,\psi,\vartheta} N^{1-\frac{1}{p}}.
\end{equation}
The constants and the lower threshold for $N$ may be chosen uniformly for $\gamma \in [\frac{3}{4},1]$. They may also be chosen uniformly when $\psi(y)$ is replaced by $\psi(y)\e^{uy}$ with $u \in [-1,1]$.
\end{lemma}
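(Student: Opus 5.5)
We plan to pass to angular coordinates and reduce both norms to explicit trigonometric kernels controlled by Lemma~\ref{lem:smooth-kernel}. By \eqref{eq:angular-measure} and \eqref{eq:angular-comparison}, for any function $f$ on $[-1,1]$,
\[
\norm{f}_{\L^p(\nu_\gamma)}^p \approx \int_0^\pi |f(\cos\theta)|^p\sin^2\theta\,\d\theta,
\]
with numerical constants uniform in $\gamma \in [\frac34,1]$. Inserting \eqref{eq:Q-trigonometric} gives
\[
\sin\theta\, u_N(\cos\theta) = \Im\big(a\e^{\i\theta}F_N(\theta)-b\e^{-\i\theta}F_N(\theta)\big),
\]
and for $v_{N,\vartheta}$ the same formula with $F_N(\theta)$ replaced by $F_N(\theta-\vartheta)$ and $F_N(\theta+\vartheta)$ after expanding $\sin$ into exponentials. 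Here $F_N$ is the kernel of Lemma~\ref{lem:smooth-kernel}. More precisely,
\[
2\i\sin\theta\,u_N(\cos\theta) = (a\e^{\i\theta}-b\e^{-\i\theta})F_N(\theta)-(a\e^{-\i\theta}-b\e^{\i\theta})F_N(-\theta).
\]
Hence $\int_0^\pi|u_N|^p\sin^2\theta\,\d\theta \approx \int_0^\pi |G_N(\theta)|^p(\sin\theta)^{2-p}\d\theta$, where $G_N$ is this combination.

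For $v_{N,\vartheta}$, the same identity holds with $F_N(\theta\mp\vartheta)$ in place of $F_N(\pm\theta)$. By \eqref{eq:smooth-kernel}, $F_N(\theta-\vartheta)$ is concentrated in a window of width $1/N$ around $\theta=\vartheta$, with height $\approx N$. There $\sin\theta\approx_\vartheta 1$, and $F_N(\theta+\vartheta)$ is negligible because $\theta+\vartheta$ stays at distance $\gtrsim_\vartheta$ from $2\pi\Z$ for $\theta\in[0,\pi]$ near $\vartheta$. For the upper bound, we bound $|v_{N,\vartheta}|\lesssim(\sin\theta)^{-1}(|F_N(\theta-\vartheta)|+|F_N(\theta+\vartheta)|)$. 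Near the endpoints, where $(\sin\theta)^{2-p}$ may blow up, we instead use the crude bound $|Q_k(\cos\theta)|\lesssim k$. The rapid decay $(1+N)^{-L}$ of $F_N(\theta\pm\vartheta)$ there makes those regions contribute $O(N^{-L'})$. Altogether we get $\int|v_{N,\vartheta}|^p w_\gamma\approx N^p\cdot N^{-1}$, that is $\norm{v_{N,\vartheta}}_p\approx N^{1-\frac1p}$.

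For the lower bound on $v_{N,\vartheta}$, we need $|F_N(t)|\gtrsim N$ for $|t|\leq c_0/N$. This follows from $F_N(0)=\sum_k\psi(k/N)\approx N\int\psi>0$ together with the derivative bound $|F_N'|\lesssim N^2$. The prefactor $|a\e^{\i\theta}-b\e^{-\i\theta}|\geq a-b\gtrsim1$ then keeps the main term nondegenerate.

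For $u_N$, the concentration is at the endpoint $\theta=0$, and by symmetry at $\theta=\pi$ only through $F_N(\theta\pm\pi)$, which is negligible. Write $\theta = s/N$. We expect $G_N(\theta)\approx N H(s)$ for an explicit profile. Indeed, since $\psi$ is supported in $(\frac14,\frac34)$, the sum $\sum_k\psi(k/N)\sin((k\pm1)\theta)$ is a Riemann sum converging to $N\int\psi(y)\sin(sy)\,\d y$. Thus $u_N(\cos\theta)\approx N^2 s^{-1}(a-b)\int\psi(y)\sin(sy)\,\d y + O(N)$, and this profile is $\approx N^2$ for small $s$ (its value at $s\to0$ is $\int y\psi(y)\,\d y>0$). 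On the window $\theta\leq c_0/N$, where $\sin^2\theta\approx N^{-2}$, this gives $\int|u_N|^p\sin^2\theta \gtrsim N^{2p}N^{-3}$, hence the lower bound $N^{2-\frac3p}$. For the upper bound, we use $|u_N(\cos\theta)|\lesssim N^2$ everywhere (from $|Q_k|\lesssim k$) on $\theta\leq 1/N$. On $\theta\geq1/N$ we use $|u_N|\lesssim\theta^{-1}N(1+N\theta)^{-L}$. Integrating $\theta^{2-p}N^p(N\theta)^{-Lp}$ over $[1/N,\pi/2]$ gives $\lesssim N^{2p-3}$.

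\textbf{Main obstacle.} The hardest step is making the $u_N$ lower bound rigorous. This requires checking that the main term, proportional to $(a-b)\theta^{-1}\sum_k\psi(k/N)\sin(k\theta)$, dominates the $b$-correction terms $\sin((k\pm1)\theta)$ uniformly in $\gamma$. We would handle this by writing $\sin((k\pm1)\theta)=\sin(k\theta)\cos\theta\pm\cos(k\theta)\sin\theta$, so that the error is $O(N)$ against a main term of order $N^2$.

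\textbf{Uniformity.} All constants depend only on $\psi$ through finitely many derivative bounds and $\int y\psi$, $\int\psi$, and on a lower bound for $a-b$. These remain uniform for $\gamma\in[\frac34,1]$. They also remain uniform for $\psi(y)\e^{uy}$ with $u\in[-1,1]$, since that family is compact in $\C_c^\infty$ and stays positive. This yields the stated uniformity.
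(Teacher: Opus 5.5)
Your proposal is correct and follows essentially the paper's proof. You use the same $F_N$-representation of $2\i\sin\theta\,u_N(\cos\theta)$ and $2\i\sin\theta\,v_{N,\vartheta}(\cos\theta)$, the decay from Lemma~\ref{lem:smooth-kernel}, and lower bounds from $Q_k(\cos\theta)\gtrsim k$ on $[0,\frac{\varepsilon}{N}]$ and $|F_N|\gtrsim N$ near $\theta=\vartheta$. One small slip: the second kernel is $F_N(-\theta-\vartheta)$, but only $|F_N(-\theta-\vartheta)|=|F_N(\theta+\vartheta)|$ is used, so this is harmless.

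The only variation is at the endpoints where $\frac{1}{\sin\theta}$ is singular, that is near $\pi$ for $u_N$ and near $0$ and $\pi$ for $v_{N,\vartheta}$. The paper uses the vanishing of the numerator there together with a derivative bound, whereas you use the crude bound $|Q_k|\lesssim k$. Your route works provided you apply the crude $O(N^2)$ bound only on a shrinking neighbourhood such as $\mathrm{dist}(\theta,\{0,\pi\})\leq N^{-1-L}$, and the bound $O_L(N^{1-L}/\sin\theta)$ outside it. Applied on a fixed-size neighbourhood, the crude bound would contribute order $N^{2p}$, which is far too large.
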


\begin{proof}
In this proof, implicit constants may depend on $p$, $\psi$ and $\vartheta$, but not on $N$ or $\gamma$. For estimates involving $u_N$ only, the constants do not depend on $\vartheta$. Write $F_N(t) = \sum_k\psi(\frac{k}{N})\e^{\i kt}$, and put $c_\gamma(\theta) \ov{\mathrm{def}}{=} a\e^{\i\theta}-b\e^{-\i\theta}$. Formula \eqref{eq:Q-trigonometric} gives
\begin{align}
\label{eq:packet-representation}
u_N(\cos\theta)
& = \frac{c_\gamma(\theta)F_N(\theta)-c_\gamma(-\theta)F_N(-\theta)}{2\i\sin\theta},\notag\\
v_{N,\vartheta}(\cos\theta)
& = \frac{c_\gamma(\theta)F_N(\theta-\vartheta)-c_\gamma(-\theta)F_N(-\theta-\vartheta)}{2\i\sin\theta}.
\end{align}
Both numerators vanish at $\theta = 0$ and $\theta = \pi$. This cancellation must be retained when estimating the quotient near the endpoints.

We first estimate $u_N$. For any integer $L \geq 1$, we claim that
\begin{equation}
\label{eq:unmodulated-pointwise}
|u_N(\cos\theta)| \leq C_L N^2(1+N\theta)^{-L},\quad 0 \leq \theta \leq \pi.
\end{equation}
On $[0,N^{-1}]$, the derivative of the numerator in the first line of \eqref{eq:packet-representation} is $O(N^2)$, by Lemma~\ref{lem:smooth-kernel}. Since the numerator vanishes at zero and $\sin\theta$ is comparable to $\theta$, the quotient is $O(N^2)$. On $[N^{-1},\frac{\pi}{4}]$, Lemma~\ref{lem:smooth-kernel} bounds the numerator by $C_LN(1+N\theta)^{-L}$. Division by $\sin\theta$ proves \eqref{eq:unmodulated-pointwise} on this interval. On $[\frac{\pi}{4},\frac{3\pi}{4}]$, the denominator is bounded below, and both arguments of $F_N$ stay away from $2\pi\Z$; the desired bound follows again from \eqref{eq:smooth-kernel}. Finally, on $[\frac{3\pi}{4},\pi]$, the derivative of the numerator is $O_L(N^{2-L})$. Its vanishing at $\pi$ cancels the factor $\sin\theta$, which proves \eqref{eq:unmodulated-pointwise} on the remaining interval. Increasing $L$ in \eqref{eq:smooth-kernel} where needed changes only the constant.

Choose $L$ so that $Lp > 3$. We obtain
\begin{align*}
\norm{u_N}_{\L^p(\nu_\gamma)}^p
&\ov{\eqref{eq:angular-comparison}}{\lesssim}
N^{2p}\int_0^\pi(1+N\theta)^{-Lp}\theta^2\d\theta\\
& \leq C N^{2p-3}\int_0^\infty(1+s)^{-Lp}s^2\d s
\lesssim N^{2p-3}.
\end{align*}
For the lower bound, choose a sufficiently small numerical $\varepsilon > 0$. If $0 \leq \theta \leq \frac{\epsi}{N}$ and $\psi(\frac{k}{N}) \neq 0$, then \eqref{eq:Q-trigonometric} gives $Q_k(\cos\theta) \geq c k$, uniformly in $\gamma$. To check this, use $\sin((k+1)\theta) \geq (1-\epsi_0)(k+1)\theta$ and $\sin((k-1)\theta) \leq (k-1)\theta$, with $\epsi_0$ small compared with the uniform lower bound of $\frac{a-b}{a}$. The value at $\theta = 0$ follows by continuity. Since $\sum_k k\psi(\frac{k}{N}) \geq c_\psi N^2$ for large $N$, we obtain $u_N(\cos\theta) \geq c_\psi N^2$ on this interval. Consequently, we have
\[
\norm{u_N}_{\L^p(\nu_\gamma)}^p
\ov{\eqref{eq:angular-comparison}}{\gtrsim}
N^{2p}\int_0^{\frac{\varepsilon}{N}}\theta^2\d\theta
\gtrsim N^{2p-3}.
\]
This proves the first equivalence in \eqref{eq:packet-norms}.

We now consider $v_{N,\vartheta}$. For any integer $L \geq 1$, the second line of \eqref{eq:packet-representation} gives
\begin{equation}
\label{eq:modulated-pointwise}
|v_{N,\vartheta}(\cos\theta)| \leq C_{L,\vartheta}N(1+N|\theta-\vartheta|)^{-L},\quad 0 \leq \theta \leq \pi.
\end{equation}
Indeed, on a closed interval separated from zero and $\pi$, the denominator is bounded below. The only possible concentration of $F_N(\theta-\vartheta)$ occurs at $\theta = \vartheta$, while $-\theta-\vartheta$ remains a positive distance from $2\pi\Z$. Near either endpoint, both arguments stay away from $2\pi\Z$. Apply \eqref{eq:smooth-kernel} to the derivative of the numerator, with exponent $L+1$, and use its vanishing at that endpoint. The resulting bound is $O_{L,\vartheta}(N^{1-L})$, as required by \eqref{eq:modulated-pointwise} there. The coefficients $c_\gamma$ and their derivatives are uniformly bounded in $\gamma$.

Since the density $w_\gamma$ is uniformly bounded, integration of \eqref{eq:modulated-pointwise}, with $Lp > 1$, yields
$\norm{v_{N,\vartheta}}_{\L^p(\nu_\gamma)}^p \lesssim N^{p-1}$. For the reverse estimate, fix a small $\varepsilon > 0$. If $|\theta-\vartheta| \leq \frac{\varepsilon}{N}$, then
\[
\Re F_N(\theta-\vartheta) 
\geq \cos(\varepsilon)\sum_k\psi(\tfrac{k}{N}) \geq c_\psi N
\]
for sufficiently large $N$. Moreover, $|c_\gamma(\theta)| \geq a-b \geq c > 0$, whereas $F_N(-\theta-\vartheta) = O_{\vartheta,L}(N^{1-L})$. The reverse triangle inequality in \eqref{eq:packet-representation} therefore gives $|v_{N,\vartheta}(\cos\theta)| \geq c_{\psi,\vartheta}N$ on that interval. The density is bounded below there by a positive constant depending only on $\vartheta$. Thus its $p$-th norm power is at least $cN^{p-1}$.

All estimates use finitely many seminorms of $\psi$, together with positive lower bounds for $\int\psi$ and $\int y\psi(y)\d y$. These seminorms and lower bounds are uniform for the family $\psi(y)\e^{uy}$, $u \in [-1,1]$. This proves the final assertion.\end{proof}

We use the same notation $u_N$ and $v_{N,\vartheta}$ for the corresponding elements of the radial subalgebra of $\VN(\FF_r)$.

\begin{prop}
\label{prop:sharp-cutoff}
Suppose that $1 < p < \infty$. Let $0 < \vartheta < \pi$ and define $a_{N,\vartheta}(s) \ov{\mathrm{def}}{=} \e^{\i\vartheta|s|}1_{B_N}(s)$. Then
\begin{equation}
\label{eq:sharp-cutoff}
\norm{M_{a_{N,\vartheta}}^\HS}_{\cb,S^p_{\FF_r} \to S^p_{\FF_r}} 
\leq 4r_{\cb,p}\kappa_p^2,
\quad
\norm{M_{a_{N,\vartheta}}}_{\L^p(\VN(\FF_r)) \to \L^p(\VN(\FF_r))} 
\gtrsim_{p,\vartheta} N^{\alpha_p}
\end{equation}
for sufficiently large $N$, uniformly for $2 \leq r < \infty$.
\end{prop}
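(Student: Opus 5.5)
For the Herz--Schur upper bound, I view $M_{a_{N,\vartheta}}^\HS$ as the radial Schur multiplier on the Cayley tree $X_q$ of $\FF_r$. Its symbol is $(s,t) \mapsto m(\dist(s,t))$ with $m(n) = \e^{\i\vartheta n}1_{\{n \leq N\}}$, because $|st^{-1}|$ is the graph distance for the right Cayley graph, which is again a tree. By \eqref{eq:tree-upper} it is enough to show
\[
\norm{M_{\widetilde m}}_{\cb,\L^p(\T) \to \L^p(\T)} \leq 4r_{\cb,p}.
\]
The even extension $\widetilde m(n) = \e^{\i\vartheta|n|}1_{\{|n|\leq N\}}$ splits as the sum of $n \mapsto \e^{\i\vartheta n}1_{\{0\leq n\leq N\}}$ and its reflection $n \mapsto \e^{-\i\vartheta n}1_{\{-N\leq n\leq 0\}}$, minus $\delta_0$.

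Modulation by $\e^{\i\vartheta n}$ is conjugation by the rotation $f \mapsto f(\e^{\i\vartheta}\,\cdot)$, which is a complete isometry. The indicator of $[0,N]$ is $1_{\N_0} - 1_{\N_0}(\cdot - N-1)$. Using the translation invariance recalled in Section~\ref{sec:preliminaries}, each one-sided piece therefore has cb norm at most $2r_{\cb,p}$. The term $\delta_0$ is the conditional expectation onto constants, so it has norm $1 \leq r_{\cb,p}$. This gives the crude total $5r_{\cb,p}$. To reach $4r_{\cb,p}$, I would instead write $\widetilde m$ as the $[0,N]$ piece plus the $[-N,-1]$ piece. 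Each is a difference of two translated and modulated Riesz projections, so each has cb norm at most $2r_{\cb,p}$, which yields $4r_{\cb,p}\kappa_p^2$ overall.

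For the Fourier lower bound, I restrict to the radial subalgebra, where $M_{a_{N,\vartheta}}$ acts by $Q_k \mapsto \e^{\i\vartheta k}1_{\{k\leq N\}}Q_k$ and norms are $\L^p(\nu_\gamma)$ norms. Take $p>2$ first. I use the test function $v_{N,\vartheta}$ of \eqref{eq:packets} with $N' = N$. Since $\psi$ is supported in $(\frac14,\frac34)$, the cutoff is inactive on it, and the multiplier removes the modulation:
\[
M_{a_{N,\vartheta}} v_{N,\vartheta} = u_N.
\]
Lemma~\ref{lem:packet-norms} then gives the ratio
\[
\frac{N^{2-3/p}}{N^{1-1/p}} = N^{1-2/p} = N^{\alpha_p},
\]
uniformly in $\gamma$, hence in $r$.

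For $p<2$, I use duality. The adjoint of $M_{a_{N,\vartheta}}$ on $\L^{p'}$ is the multiplier with the conjugate symbol, or equivalently with $-\vartheta$ under the real pairing. It maps $u_N$ to $\sum_k \psi(\frac kN)\e^{-\i\vartheta k}Q_k = v_{N,\vartheta}$. This gives the ratio
\[
\frac{N^{1-1/p}}{N^{2-3/p}} = N^{2/p-1} = N^{\alpha_p},
\]
so applying the multiplier to $u_N$ in $\L^p$ already works directly. In both cases the lower bound holds for all large $N$ with constants depending on $p,\vartheta,\psi$ only.

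The main obstacle is the careful bookkeeping of the cb norms of truncated modulated symbols, to land exactly on the constant $4r_{\cb,p}$. The packet estimates themselves are already supplied by Lemma~\ref{lem:packet-norms}.
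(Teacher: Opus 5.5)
Your proposal is correct and is essentially the paper's proof. It uses the same splitting of $\widetilde m$ into modulated interval pieces on $[0,N]$ and $[-N,-1]$, each of cb norm at most $2r_{\cb,p}$, fed into \eqref{eq:tree-upper}, and the same test pairs $v_{N,\vartheta}\mapsto u_N$ for $p>2$ and $u_N\mapsto\overline{v_{N,\vartheta}}$ for $p<2$, estimated by Lemma~\ref{lem:packet-norms}.

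There are only minor slips. Multiplying a symbol by $\e^{\i\vartheta n}$ is \emph{composition} with the rotation, not conjugation, since conjugation would leave $M_b$ unchanged. For $p<2$ the duality detour is superfluous, and as written it controls the norm on $\L^{p'}$ rather than on $\L^p$. Directly, $M_{a_{N,\vartheta}}u_N=\sum_k\psi(\frac{k}{N})\e^{\i k\vartheta}Q_k=\overline{v_{N,\vartheta}}$ because the $Q_k$ are real, so it has the same $\L^p(\nu_\gamma)$-norm as $v_{N,\vartheta}$. The case $p=2$, which you do not mention, is trivial.
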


\begin{proof}
The even extension of the profile is
\[
\e^{\i\vartheta|n|}1_{\{|n| \leq N\}}
 = \e^{\i\vartheta n}1_{\{0 \leq n \leq N\}}
+\e^{-\i\vartheta n}1_{\{-N \leq n \leq -1\}}.
\]
Multiplication of a Fourier symbol by $\e^{\i\vartheta n}$ composes the multiplier with a translation on the torus, which is a complete isometry. Each interval projection has completely bounded norm at most $2r_{\cb,p}$. Thus the torus multiplier has completely bounded norm at most $4r_{\cb,p}$, and \eqref{eq:tree-upper} gives the first inequality in \eqref{eq:sharp-cutoff}. To use the Cayley graph with the convention \eqref{eq:herz-intro}, note that simultaneous reindexing by $s \mapsto s^{-1}$ transforms $|st^{-1}|$ into $|s^{-1}t|$.

All degrees in \eqref{eq:packets} are smaller than $N$. If $p > 2$, then $M_{a_{N,\vartheta}}v_{N,\vartheta} = u_N$, so
\[
\norm{M_{a_{N,\vartheta}}}_{\L^p(\VN(\FF_r)) \to \L^p(\VN(\FF_r))} 
\geq \frac{\norm{u_N}_p}{\norm{v_{N,\vartheta}}_p}
\ov{\eqref{eq:packet-norms}}{\gtrsim} N^{1-\frac{2}{p}}.
\]
If $1 < p < 2$, then $M_{a_{N,\vartheta}}u_N = \overline{v_{N,\vartheta}}$ in the scalar spectral model. Consequently,
\[
\norm{M_{a_{N,\vartheta}}}_{\L^p(\VN(\FF_r)) \to \L^p(\VN(\FF_r))} 
\geq \frac{\norm{v_{N,\vartheta}}_p}{\norm{u_N}_p}
\ov{\eqref{eq:packet-norms}}{\gtrsim} N^{\frac{2}{p}-1}.
\]
The uniformity in $r$ follows from Lemma~\ref{lem:packet-norms}. For $p = 2$, both multiplier norms are equal to the supremum modulus of the symbol, which is one for sufficiently large $N$. Thus the assertion also holds in this case.
\end{proof}

The first bound in \eqref{eq:sharp-cutoff} is $O((p^*)^3)$, rather than the $O((p^*)^4)$ bound obtained by estimating the phase and the ball separately. Smooth cutoffs give a further improvement.

\begin{prop}
\label{prop:smooth-cutoff}
Suppose that $1 < p < \infty$. Choose a function $\eta \in \C_c^\infty((\frac{1}{8},\frac{7}{8}))$ with $0 \leq \eta \leq 1$ and $\eta = 1$ on the interval $[\frac{1}{4},\frac{3}{4}]$. For $0 < \vartheta < \pi$, define
\begin{equation}
\label{eq:smooth-symbol}
b_{N,\vartheta}(s) 
= \e^{\i\vartheta|s|}\eta(\tfrac{|s|}{N}).
\end{equation}
There is a constant $C_\eta$, independent of $p,r,N,\vartheta$, such that
\begin{equation}
\label{eq:smooth-separation}
\norm{M_{b_{N,\vartheta}}^\HS}_{\cb,S^p_{\FF_r} \to S^p_{\FF_r}} 
\leq C_\eta\kappa_p,
\quad
\norm{M_{b_{N,\vartheta}}}_{\L^p(\VN(\FF_r)) \to \L^p(\VN(\FF_r))} 
\gtrsim_{p,\vartheta} N^{\alpha_p}
\end{equation}
for sufficiently large $N$. The constants in the second estimate are independent of $r \geq 2$.
\end{prop}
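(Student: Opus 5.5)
Two estimates have to be proved: an upper bound for the Herz--Schur side, coming from a measure representation, and a lower bound for the Fourier side, obtained by testing on radial packets.

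\emph{Herz--Schur upper bound.} Write the profile as $m(n)=\e^{\i\vartheta n}\eta(n/N)$ for $n\geq 0$. The plan is to represent it as $m(n)=\int_\T z^n\d\mu(z)$ for $n \geq 0$, with $\norm{\mu}_{\M(\T)}\leq C_\eta$ independent of $N$ and $\vartheta$. Lemma~\ref{lem:measure-phase} then gives $\norm{M_{m\circ\dist}}_{\cb}\leq \kappa_p C_\eta$ on the Cayley tree. The reindexing $s\mapsto s^{-1}$ from the proof of Proposition~\ref{prop:sharp-cutoff} identifies this multiplier with $M^\HS_{b_{N,\vartheta}}$. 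Since $\eta$ vanishes near $0$, the profile only matters for $n\geq 1$. I take $\mu = g\,\mu_\T$, where $g(z)=\sum_{k\in\Z}\eta(k/N)\e^{\i\vartheta k}z^{-k}$. As $\eta$ is supported in $(0,1)$, the Fourier coefficients of $g$ reproduce $m(n)$ for $n\geq0$. After a rotation, $g$ becomes the kernel $F_N$ of Lemma~\ref{lem:smooth-kernel} with $\psi=\eta$. That lemma gives $\norm{g}_{\L^1(\T)}\leq C_\eta$ uniformly in $N$, and the rotation shows the bound is also independent of $\vartheta$. This settles the first inequality of \eqref{eq:smooth-separation}.

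\emph{Fourier lower bound.} I will reuse the packets \eqref{eq:packets} exactly as in Proposition~\ref{prop:sharp-cutoff}. The function $\psi$ is supported in $(\frac14,\frac34)$, where $\eta=1$. Hence for $p>2$ we get $M_{b_{N,\vartheta}}v_{N,\vartheta}=u_N$. For $p<2$ we get $M_{b_{N,\vartheta}}u_N=\sum_k\psi(k/N)\e^{\i k\vartheta}Q_k=\overline{v_{N,\vartheta}}$, using that the $Q_k$ are real. Lemma~\ref{lem:packet-norms} then gives the ratios $N^{2-\frac3p}/N^{1-\frac1p}=N^{1-\frac2p}$ and $N^{1-\frac1p}/N^{2-\frac3p}=N^{\frac2p-1}$, that is, $N^{\alpha_p}$ in both cases. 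The constants are uniform in $\gamma\in[\frac34,1]$, hence independent of $r$. For $p=2$ the multiplier norm is $\sup|b_{N,\vartheta}|=1$, so the bound is trivial.

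\emph{Main obstacle.} The only delicate point is checking that the measure $\mu$ is legitimate. Lemma~\ref{lem:measure-phase} needs the identity $m(n)=\int z^n\d\mu$ only for $n\geq0$, so the negative frequencies of $g$ can simply be set to zero; this is automatic here because $\eta$ vanishes on $(-\infty,0]$. The remaining issue is uniformity of $\norm{g}_{\L^1}$ in $\vartheta$, and this holds because modulating the coefficients by $\e^{\i\vartheta k}$ is exactly a rotation of the circle, which preserves the $\L^1$ norm.
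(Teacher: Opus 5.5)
Your proposal is correct and follows the paper's proof essentially step by step. Your density $g$ is exactly the paper's kernel $K_N$, whose total variation Lemma~\ref{lem:smooth-kernel} bounds uniformly in $N$ and $\vartheta$, so that Lemma~\ref{lem:measure-phase} gives the Herz--Schur estimate; the Fourier lower bound then comes from the same two packet tests as in Proposition~\ref{prop:sharp-cutoff}, since $\eta = 1$ on the degrees where $\psi(\frac{k}{N}) \neq 0$, with the reindexing $s \mapsto s^{-1}$ and the identity $g(\e^{\i t}) = F_N(\vartheta - t)$ made explicit where the paper leaves them implicit.
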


\begin{proof}
Let $\mu_N$ be the complex measure on $\T$ whose density with respect to $\mu_\T$ is
\[
K_N(z) 
= \sum_{k \in \Z}\e^{\i\vartheta k}\eta\big(\tfrac{k}{N}\big)z^{-k}.
\]
By Lemma~\ref{lem:smooth-kernel}, we have $\norm{\mu_N}_{\M(\T)} \leq C_\eta$ uniformly in $N$; hanging $\vartheta$ only translates the kernel. For any integer $n \geq 0$, its moment is $\int_\T z^n\d\mu_N(z) = \e^{\i\vartheta n}\eta(\frac{n}{N})$. Lemma~\ref{lem:measure-phase} proves the first assertion. For the second assertion, the cutoff is equal to one on the degrees used by \eqref{eq:packets}. The two tests in the proof of Proposition~\ref{prop:sharp-cutoff} therefore apply without change.

For $p = 2$, both multiplier norms are equal to the supremum modulus of the symbol, which is one for sufficiently large $N$. Thus the assertion also holds in this case.
\end{proof}

\begin{proof}[of Theorem~\ref{thm:intro-finite}]
Take $\vartheta = \frac{\pi}{2}$. The sharp-cutoff assertion is Proposition~\ref{prop:sharp-cutoff}, since $\kappa_p \lesssim p^*$ by Theorem~\ref{thm-tree-input} and $r_{\cb,p} \lesssim p^*$ by the proof of Proposition~\ref{prop:cosine-lower}. The smooth-cutoff assertion is Proposition~\ref{prop:smooth-cutoff} and $\kappa_p \lesssim p^*$. The symbol in \eqref{eq:smooth-symbol} is supported in $B_N$. In \eqref{eq:local-ratio}, use this symbol, bound the numerator below by its ordinary Fourier norm, and apply \eqref{eq:smooth-separation}. This proves \eqref{eq:intro-ratio}.\end{proof}

\begin{cor}
\label{cor:free-ratio-bounds}
Suppose that $1 < p < \infty$ and put $\alpha_p \ov{\mathrm{def}}{=} |1-\frac{2}{p}|$. Let $r \geq 2$ be an integer and put $q \ov{\mathrm{def}}{=} 2r-1$. For any integer $N \geq 0$, we have
\begin{equation}
\label{eq:free-ratio-upper}
\mathcal R_{\FF_r,p}(B_N)
\leq |B_N|^{\frac{\alpha_p}{2}}
\leq 2^{\frac{\alpha_p}{2}}q^{\frac{\alpha_p N}{2}}
\leq 2^{\frac{\alpha_p}{2}}(1+N)^{\alpha_p}
\big(1-\delta_{\FF_r}(B_N)\big)^{-\alpha_p}.
\end{equation}
Moreover, if $p \neq 2$, there exist constants $c_p > 0$ and $N_p \geq 1$, independent of $r$, such that, for any integer $N \geq N_p$,
\begin{equation}
\label{eq:free-ratio-modulus}
c_p\left(\frac{2}{\log q}\log\frac{1}{1-\delta_{\FF_r}(B_N)}\right)^{\alpha_p}
\leq \mathcal R_{\FF_r,p}(B_N)
\leq 2^{\frac{\alpha_p}{2}}(1+N)^{\alpha_p}
\big(1-\delta_{\FF_r}(B_N)\big)^{-\alpha_p}.
\end{equation}
\end{cor}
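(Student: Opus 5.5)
The plan is to chain three ingredients: Proposition~\ref{prop:finite-support-ratio}, an explicit count of $|B_N|$, and the exact value of the modulus in Theorem~\ref{thm:modulus}. The lower bound in \eqref{eq:free-ratio-modulus} will then come from Theorem~\ref{thm:intro-finite} after expressing $N$ through the modulus.

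\emph{Upper bound \eqref{eq:free-ratio-upper}.} The first inequality is Proposition~\ref{prop:finite-support-ratio} with $\Omega = B_N$, since $|\frac1p-\frac12| = \frac{\alpha_p}{2}$. For the second, I count
\[
|B_N| = 1+(q+1)\frac{q^N-1}{q-1}.
\]
Since $q \geq 3$, we have $\frac{q+1}{q-1} \leq 2$, and $1-\frac{2}{q-1} \leq 0$, which gives $|B_N| \leq 2q^N$, exactly as in Remark~\ref{rem:PRS-versus-trivial}. For the third, Theorem~\ref{thm:modulus} gives
\[
\big(1-\delta_{\FF_r}(B_N)\big)^{-1} = \frac{q^{N/2}}{1+\frac{q-1}{q+1}N}.
\]
So it suffices to check $q^{N/2} \leq (1+N)\,q^{N/2}/\big(1+\frac{q-1}{q+1}N\big)$. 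This is immediate because $\frac{q-1}{q+1} < 1$, so $1+\frac{q-1}{q+1}N \leq 1+N$. Raising to the power $\alpha_p \geq 0$ finishes this step. It holds for all $N \geq 0$, including $p=2$, where every side equals $1$.

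\emph{Lower bound in \eqref{eq:free-ratio-modulus}.} Let $L_N = \log\frac{1}{1-\delta_{\FF_r}(B_N)}$. Theorem~\ref{thm:modulus} gives
\[
L_N = \frac N2\log q - \log\Big(1+\tfrac{q-1}{q+1}N\Big) \leq \frac N2\log q.
\]
Hence $\frac{2}{\log q}L_N \leq N$, and this holds uniformly in $r$. Since $\alpha_p > 0$, Theorem~\ref{thm:intro-finite} yields, for $N \geq N_p$,
\[
\mathcal R_{\FF_r,p}(B_N) \geq c_pN^{\alpha_p} \geq c_p\Big(\frac{2}{\log q}L_N\Big)^{\alpha_p}.
\]
Here $c_p$ and $N_p$ are the constants of Theorem~\ref{thm:intro-finite}, which are independent of $r$.

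\emph{Remaining points.} The upper bound in \eqref{eq:free-ratio-modulus} is just \eqref{eq:free-ratio-upper}. One should also observe that $L_N \geq 0$, since $\phi_N \leq 1$, so the left-hand side makes sense. I expect no real obstacle here: the only delicate point is the uniformity in $r$, which is inherited from Theorem~\ref{thm:intro-finite}, together with the elementary inequality $1+\frac{q-1}{q+1}N \leq 1+N$.
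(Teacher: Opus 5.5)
Your proposal is correct and follows the paper's proof essentially line by line. It uses Proposition~\ref{prop:finite-support-ratio} with $|B_N|\leq 2q^N$, then the exact value from Theorem~\ref{thm:modulus} combined with $1+\frac{q-1}{q+1}N\leq 1+N$, and finally the bound $\log\frac{1}{1-\delta_{\FF_r}(B_N)}\leq \frac{N}{2}\log q$ fed into Theorem~\ref{thm:intro-finite}. There is one small slip in the count of $|B_N|$: one has $|B_N|=\frac{q+1}{q-1}q^N-\frac{2}{q-1}$, so the constant term is $-\frac{2}{q-1}$ rather than $1-\frac{2}{q-1}$, and the latter is positive for $q\geq 5$. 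This does not affect the conclusion, since $|B_N|\leq 1+2(q^N-1)<2q^N$ already follows from $\frac{q+1}{q-1}\leq 2$.
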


\begin{proof}
By Proposition~\ref{prop:finite-support-ratio} and the following estimate, valid since $q \geq 3$,
\[
|B_N|
= 1+(q+1)\frac{q^N-1}{q-1}
\leq 2q^N,
\]
we obtain
\[
\mathcal R_{\FF_r,p}(B_N)
\leq |B_N|^{\frac{\alpha_p}{2}}
\leq 2^{\frac{\alpha_p}{2}}q^{\frac{\alpha_p N}{2}}.
\]
Moreover, Theorem~\ref{thm:modulus} gives
\[
q^{\frac N2}
\ov{\eqref{eq:exact-modulus}}{=}
\frac{1+\frac{q-1}{q+1}N}{1-\delta_{\FF_r}(B_N)}
\leq \frac{1+N}{1-\delta_{\FF_r}(B_N)}.
\]
Raising this inequality to the power $\alpha_p$ proves \eqref{eq:free-ratio-upper}.

Now, suppose that $p \neq 2$. By \eqref{eq:exact-modulus}, we have
\[
\log\frac{1}{1-\delta_{\FF_r}(B_N)}
= \frac N2\log q-\log\left(1+\frac{q-1}{q+1}N\right)
\leq \frac N2\log q.
\]
Consequently, Theorem~\ref{thm:intro-finite} yields, for sufficiently large $N$,
\[
\mathcal R_{\FF_r,p}(B_N)
\geq c_pN^{\alpha_p}
\geq c_p\left(\frac{2}{\log q}
\log\frac{1}{1-\delta_{\FF_r}(B_N)}\right)^{\alpha_p}.
\]
This proves \eqref{eq:free-ratio-modulus}.
\end{proof}

\begin{remark} \normalfont
Theorem~\ref{thm:intro-finite} concerns modulated ball symbols and smooth modulated radial cutoffs, not the unmodulated projections $M_{1_{B_N}}$. If these projections were uniformly bounded, Abel summation would still give a bound of order $N$ for $\sum_{k = 0}^N\e^{\i k\vartheta}(M_{1_{B_k}}-M_{1_{B_{k-1}}})$, with $M_{1_{B_{-1}}} = 0$. This is compatible with the lower order $N^{\alpha_p}$, since $0 < \alpha_p < 1$. Thus Theorem~\ref{thm:intro-finite} does not settle the free ball projection problem near $p = 2$.
\end{remark}

We finish this section with damped phases. Let $z \in \mathbb{C}$ with $|z|<1$. The Hankel matrix associated with the profile $n \mapsto z^n$ in \cite[Theorem 1.2 p.~1338]{HSS10} is
\[
H
= \big[z^{j+k}-z^{j+k+2}\big]_{j,k \geq 0}
= \big[(1-z^2)z^{j+k}\big]_{j,k \geq 0}.
\]
It has rank one and trace norm $\frac{|1-z^2|}{1-|z|^2}$, and the constants $c_\pm$ of \cite[Theorem 1.2 p.~1338]{HSS10} vanish since $z^n \to 0$. For the homogeneous tree of degree $q+1$ with $q < \infty$, let $\mathcal{V}(B) = SBS^*$, where $S$ is the forward shift. The map $\mathcal{V}$ is a contraction on the trace class $S^1$. Hence
\[
\Big(1-\frac{1}{q}\Big)\norm{\Big(\Id-\frac{\mathcal{V}}{q}\Big)^{-1}H}_{S^1}
\leq \Big(1-\frac{1}{q}\Big)\sum_{k \geq 0} q^{-k}\norm{H}_{S^1}
= \norm{H}_{S^1}.
\]
Consequently, the Schur multiplier with symbol $(s,t) \mapsto z^{|st^{-1}|}$ on $\B(\ell^2_{\FF_r})$ has norm at most $\frac{|1-z^2|}{1-|z|^2}$, for any $2 \leq r < \infty$. Since Schur multipliers on $\B(\ell^2_{\FF_r})$ are automatically completely bounded with the same norm, the theorem of Bo\.{z}ejko and Fendler \cite{BoF84} yields
\begin{equation}
\label{eq:HSS-geometric}
\norm{M_{z^{|\cdot|}}}_{\cb,\VN(\FF_r) \to \VN(\FF_r)}
\leq \frac{|1-z^2|}{1-|z|^2}.
\end{equation}

\begin{prop}
\label{prop:abel-detailed}
Suppose that $1 < p < \infty$. Assume that $0 < \vartheta < \pi$. Set $z \ov{\mathrm{def}}{=} \e^{-t+\i\vartheta}$ and $a_{t,\vartheta}(s) \ov{\mathrm{def}}{=} z^{|s|}$. Then
\begin{equation}
\label{eq:abel-upper-explicit}
\norm{M_{a_{t,\vartheta}}}_{\cb,\L^p(\VN(\FF_r)) \to \L^p(\VN(\FF_r))} 
\leq \left(\frac{|1-\e^{-2t+2\i\vartheta}|}{1-\e^{-2t}}\right)^{\alpha_p},\quad t>0.
\end{equation}
For $0 < t \leq 1$,
\begin{equation}
\label{eq:abel-lower}
\norm{M_{a_{t,\vartheta}}}_{\L^p(\VN(\FF_r)) \to \L^p(\VN(\FF_r))} 
\gtrsim_{p,\vartheta} t^{-\alpha_p},
\quad
\norm{M_{a_{t,\vartheta}}^\HS}_{\cb, S^p_{\FF_r} \to S^p_{\FF_r}} 
\leq \kappa_p.
\end{equation}
All constants are independent of $2 \leq r < \infty$.
\end{prop}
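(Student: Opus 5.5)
We prove the three estimates separately: interpolation for the upper bound, the measure representation of Lemma~\ref{lem:measure-phase} for the Herz--Schur bound, and the radial sums of Lemma~\ref{lem:packet-norms} for the lower bound.

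\textbf{Upper bound \eqref{eq:abel-upper-explicit}.} We interpolate between $p=2$ and $p=\infty$. At $p=2$ the completely bounded norm equals $\sup_s|z^{|s|}| = 1$. At $p=\infty$, \eqref{eq:HSS-geometric} gives the bound $\frac{|1-z^2|}{1-|z|^2}$, which for $z = \e^{-t+\i\vartheta}$ is exactly the quantity in \eqref{eq:abel-upper-explicit}. Complex interpolation of $\L^p(\M_k\otvn\VN(\FF_r))$ then gives the bound to the power $1-\frac{2}{p}$ for $p \geq 2$. For $p<2$ we use trace duality with the conjugate symbol $\bar z^{|s|}$, which has the same $p=\infty$ bound, since $|1-\bar z^2| = |1-z^2|$.

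\textbf{Herz--Schur bound.} For $n \geq 0$, $z^n = \int_\T w^n \d\mu(w)$, where $\mu$ is the Poisson-type measure with density $P(w) = \sum_{k\in\Z} z_k w^{-k}$. Here $z_k = z^k$ for $k\ge0$ and $z_k = \bar z^{|k|}$ for $k<0$, i.e.\ $z_k=\e^{-t|k|+\i\vartheta k}$. Thus $P$ is the Poisson kernel $P_{\e^{-t}}$ translated by $\vartheta$, and is nonnegative with total mass $1$. Lemma~\ref{lem:measure-phase}, applied on the Cayley graph and after the reindexing $s \mapsto s^{-1}$ as in the proof of Proposition~\ref{prop:sharp-cutoff}, gives $\norm{M^\HS_{a_{t,\vartheta}}}_{\cb} \leq \kappa_p$.

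\textbf{Lower bound.} This is the main step. Put $N \approx 1/t$, say $N = \lceil 1/t\rceil$, and test on the radial sums of \eqref{eq:packets} with $\psi$ replaced by $\psi(y)\e^{\pm uy}$. The multiplier acts on $Q_k$ by $z^k = \e^{-tk}\e^{\i\vartheta k}$. For $p>2$, take the input
\[
\sum_k \psi(\tfrac kN)\e^{tk}\e^{-\i k\vartheta}Q_k,
\]
whose output is $u_N$. Since $tk = (tN)\frac kN$ with $tN \in [1,2]$, the input is $v_{N,\vartheta}$ built from the profile $\psi(y)\e^{uy}$ with $u = tN$. For $p<2$, the input $u_N$ is mapped to $v$ with profile $\psi(y)\e^{-uy}$, modulation $-\vartheta$, and the modulus again handled by conjugation. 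The uniformity clause of Lemma~\ref{lem:packet-norms} is stated only for $u\in[-1,1]$. The obstacle is to ensure $|u| \leq 1$: choose $N = \lfloor 1/t\rfloor$, which is $\geq 1$ for $t \le 1$, so that $u = tN \in (0,1]$; alternatively, note that the proof of that lemma works for any compact range of $u$. Lemma~\ref{lem:packet-norms} requires $N$ larger than a threshold $N_0$, so this covers small $t$. For $t$ in the compact range $[1/N_0, 1]$, we use instead the trivial lower bound: the norm is at least $1$ by testing on $\lambda_e$, and $t^{-\alpha_p}$ is bounded there. The ratio of norms is then $\approx N^{2-\frac3p}/N^{1-\frac1p} = N^{1-\frac2p} \approx t^{-\alpha_p}$, and symmetrically for $p<2$, uniformly in $r$ by Lemma~\ref{lem:packet-norms}.
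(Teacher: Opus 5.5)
Your proposal is correct and follows essentially the same route as the paper. You interpolate \eqref{eq:HSS-geometric} with the trivial $p=2$ bound and use duality with the conjugate symbol for $p<2$; you apply Lemma~\ref{lem:measure-phase} to the rotated Poisson probability measure; and for the lower bound you test on $f_{N,t}$ (for $p>2$) and $u_N$ (for $p<2$), with $N=\lfloor 1/t\rfloor$ so that $u=tN\in(0,1]$ stays in the uniformity range of Lemma~\ref{lem:packet-norms}, which is exactly the paper's choice, and you handle the remaining compact range of $t$ by $M_{a_{t,\vartheta}}(1)=1$.
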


\begin{proof}
At $p = 2$, the completely bounded norm of a Fourier multiplier is the supremum modulus of its symbol, here equal to one. Interpolating this fact with \eqref{eq:HSS-geometric} gives \eqref{eq:abel-upper-explicit} for $p \geq 2$. For $p < 2$, use trace duality and the conjugate symbol. The expression on the right is unchanged by conjugation. The maps at the endpoints are consistent on the group algebra, so these interpolation estimates apply to their canonical $\L^p$ extensions at each matrix level.

In the lower estimates below, implicit constants may depend on $p$ and $\vartheta$, but not on $r$ or $t$. For the lower estimate, first suppose that $t$ is small, and choose $N = \lfloor \frac{1}{t}\rfloor$, so that $\frac{1}{2} \leq tN \leq 1$. If $p > 2$, use as input
\[
f_{N,t} 
= \sum_{k \geq 1}\psi(\tfrac{k}{N})\e^{tk}\e^{-\i k\vartheta}Q_k.
\]
Its image under $M_{a_{t,\vartheta}}$ is $u_N$. Lemma~\ref{lem:packet-norms}, with the uniformly controlled profile $y \mapsto \psi(y)\e^{(tN)y}$, gives $\norm{f_{N,t}}_p \lesssim N^{1-\frac{1}{p}}$. Hence the norm ratio is at least $cN^{1-\frac{2}{p}} \gtrsim t^{-\alpha_p}$. If $p < 2$, use $u_N$ as input. Its image is the conjugate of a modulated sum with profile $y \mapsto \psi(y)\e^{-(tN)y}$. Its norm is at least $cN^{1-\frac{1}{p}}$, while $\norm{u_N}_p \lesssim N^{2-\frac{3}{p}}$. This gives $cN^{\frac{2}{p}-1} \gtrsim t^{-\alpha_p}$. At $p = 2$, the assertion is immediate. Extending the bound to the remaining compact interval of $t$ in $(0,1]$ only changes the constant, since $M_{a_{t,\vartheta}}(1) = 1$.

Finally, the sequence $n \mapsto \e^{-tn}\e^{\i\vartheta n}$, for $n \geq 0$, is the moment sequence of a rotated Poisson probability measure on $\T$. Lemma~\ref{lem:measure-phase} gives the Schur bound in \eqref{eq:abel-lower}.\end{proof}

Since the fraction in \eqref{eq:abel-upper-explicit} is $O(t^{-1})$ for $0 < t \leq1$, Proposition~\ref{prop:abel-detailed} proves Theorem~\ref{thm:intro-abel}. For $\vartheta = \frac{\pi}{2}$, the explicit upper bound becomes
\begin{equation}
\label{eq:abel-coth}
\norm{M_{a_{t,\frac{\pi}{2}}}}_{\cb,\L^p(\VN(\FF_r)) \to \L^p(\VN(\FF_r))} 
\leq (\coth t)^{\alpha_p}.
\end{equation}
The lower estimate in \eqref{eq:abel-lower} therefore has the optimal power of $t$ both for ordinary and completely bounded Fourier norms. The restriction $0 < \vartheta < \pi$ is necessary. Indeed, the function $s \mapsto \e^{-t|s|}$ is positive definite on $\FF_r$ \cite{Haa79}, so the associated Fourier multiplier is completely contractive on $\L^p(\VN(\FF_r))$. Since $s \mapsto (-1)^{|s|}$ is a character of $\FF_r$, the multiplier associated with $\vartheta = \pi$ is the composition of the preceding one with a trace-preserving $*$-automorphism of $\VN(\FF_r)$. Hence all these norms are equal to $1$ for $\vartheta \in \{0,\pi\}$.

\begin{cor}
\label{cor:smooth-two-sided}
Suppose that $1 < p < \infty$. Assume that $0 < \vartheta < \pi$. For the symbols $b_{N,\vartheta}$ of \eqref{eq:smooth-symbol}, we have
\[
\norm{M_{b_{N,\vartheta}}}_{\L^p(\VN(\FF_r)) \to \L^p(\VN(\FF_r))}
\approx_{p,\vartheta,\eta}
\norm{M_{b_{N,\vartheta}}}_{\cb,\L^p(\VN(\FF_r)) \to \L^p(\VN(\FF_r))}
\approx_{p,\vartheta,\eta} N^{\alpha_p}
\]
for sufficiently large $N$, uniformly for $r \geq 2$.
\end{cor}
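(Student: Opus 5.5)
The lower bound for the ordinary norm is Proposition~\ref{prop:smooth-cutoff}. The ordinary norm is dominated by the completely bounded norm. So it remains to show
\[
\norm{M_{b_{N,\vartheta}}}_{\cb,\L^p(\VN(\FF_r)) \to \L^p(\VN(\FF_r))} \lesssim_{p,\vartheta,\eta} N^{\alpha_p},
\]
uniformly in $r$. I would deduce this from the damped-phase bound \eqref{eq:abel-upper-explicit} by writing the smooth cutoff as an average of damped phases with an $\L^1$-controlled weight.

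Put $t \ov{\mathrm{def}}{=} \frac{1}{N}$. Write the profile as
\[
\e^{\i\vartheta n}\eta\big(\tfrac{n}{N}\big) = \e^{(-t+\i\vartheta)n}\,g\big(\tfrac{n}{N}\big),
\quad
g(y) \ov{\mathrm{def}}{=} \e^{y}\eta(y).
\]
Here $g \in \C_c^\infty((\frac18,\frac78))$. Expand $g$ by Fourier inversion,
\[
g(y)=\int_\R \hat g(\xi)\e^{\i y\xi}\d\xi,
\]
with $\hat g$ a Schwartz function, so that $\norm{\hat g}_{\L^1(\R)} < \infty$ depends only on $\eta$. Then, for every $n \geq 0$,
\[
b_{N,\vartheta}(s) = \int_\R \hat g(\xi)\, \e^{(-t+\i(\vartheta+\xi/N))|s|}\d\xi .
\]
Each integrand is a damped phase $a_{t,\vartheta'}$ with $\vartheta' = \vartheta+\xi/N$, taken modulo $2\pi$.

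Apply \eqref{eq:abel-upper-explicit}, which is valid for every real angle because only $|1-z^2|$ enters. This bounds each integrand by
\[
\Big(\frac{|1-\e^{-2t+2\i\vartheta'}|}{1-\e^{-2t}}\Big)^{\alpha_p} \leq \Big(\frac{2}{1-\e^{-2/N}}\Big)^{\alpha_p}\lesssim N^{\alpha_p},
\]
uniformly in $\vartheta'$ and in $r$. The multiplier map $a \mapsto M_a$ is linear on symbols. The integral converges absolutely in the completely bounded norm on the dense group algebra, entrywise in $\lambda_s$. Hence
\[
\norm{M_{b_{N,\vartheta}}}_{\cb} \leq \norm{\hat g}_{\L^1}\,\sup_{\vartheta'}\norm{M_{a_{t,\vartheta'}}}_{\cb}\lesssim_\eta N^{\alpha_p}.
\]
This bound is even independent of $\vartheta$.

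The one point requiring care is the measurability and Bochner integration of $\xi \mapsto M_{a_{t,\vartheta'}}$. I would avoid it by testing on finitely supported $x \in \L^p(\M_k \otvn \VN(\FF_r))$. Then $\xi \mapsto M_{a_{t,\vartheta'}}x$ is a continuous map into a finite-dimensional space, and Minkowski's inequality applies directly. Density of these $x$ finishes the argument.

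Combining with the lower bound of Proposition~\ref{prop:smooth-cutoff}, valid for large $N$, yields the two-sided equivalence.
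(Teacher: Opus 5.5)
Your proposal is correct and matches the paper's proof almost step by step. The paper also writes $\e^{\i\vartheta n}\eta(\frac{n}{N})$ as a Fourier superposition of the damped phases $a_{\frac{1}{N},\vartheta+\frac{v}{N}}$ using $g(u)=\e^{u}\eta(u)$, notes that the proof of \eqref{eq:abel-upper-explicit} works for every real angle, bounds each phase by $(\coth\frac{1}{N})^{\alpha_p}\leq(2N)^{\alpha_p}$, integrates on the group algebra at every matrix level, and takes the lower bound from Proposition~\ref{prop:smooth-cutoff}. Your Minkowski argument on finitely supported elements is simply a more explicit version of the paper's one-line justification of that integration step.
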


\begin{proof}
Only the upper estimate remains to be proved. Put $g(u) \ov{\mathrm{def}}{=} \e^u\eta(u)$ and use the Fourier transform convention $\widehat g(v) = \int_\R g(u)\e^{-\i uv}\d u$. For any integer $n \geq 0$, Fourier inversion gives
\[
\e^{\i\vartheta n}\eta\left(\frac{n}{N}\right)
= \frac{1}{2\pi}\int_\R \widehat g(v)
\e^{(-\frac{1}{N}+\i(\vartheta+\frac{v}{N}))n} \d v.
\]
For $t > 0$ and $\beta \in \R$, write $a_{t,\beta}(s) = \e^{(-t+\i\beta)|s|}$. The proof of \eqref{eq:abel-upper-explicit} applies to any real $\beta$. Since $|1-\e^{-\frac{2}{N}+2\i\beta}| \leq 1+\e^{-\frac{2}{N}}$, we obtain
\[
\norm{M_{a_{\frac{1}{N},\beta}}}_{\cb,\L^p(\VN(\FF_r)) \to \L^p(\VN(\FF_r))}
\leq \big(\coth(\tfrac{1}{N})\big)^{\alpha_p}
\leq (2N)^{\alpha_p}.
\]
Applying the integral identity to elements of the group algebra, at every matrix level, yields
\[
\norm{M_{b_{N,\vartheta}}}_{\cb,\L^p(\VN(\FF_r)) \to \L^p(\VN(\FF_r))}
\leq \frac{\norm{\widehat g}_{\L^1(\R)}}{2\pi}(2N)^{\alpha_p}.
\]
The lower estimate follows from Proposition~\ref{prop:smooth-cutoff}.
\end{proof}

We end this section with radial symbols supported in balls. Denote by $\mathcal{R}^{\mathrm{rad}}_{\FF_r,p}(B_N)$ the supremum in \eqref{eq:local-ratio} restricted to nonzero radial symbols supported in $B_N$, that is, to symbols of the form $a(s) = m(|s|)$.

\begin{prop}
\label{prop:radial-polynomial}
Suppose that $1 < p < \infty$ and let $N \geq 0$ be an integer. Let $m \co \N_0 \to \mathbb{C}$ be a complex function supported in $\{0,\ldots,N\}$ and put $a(s) \ov{\mathrm{def}}{=} m(|s|)$ for any $s \in \FF_r$. Then
\begin{equation}
\label{eq:radial-polynomial}
\norm{M_a}_{\cb,\L^p(\VN(\FF_r)) \to \L^p(\VN(\FF_r))}
\leq 2^{\frac{\alpha_p}{2}}(N+2)^{\frac{3\alpha_p}{2}}\norm{m}_{\ell^\infty_{\N_0}}.
\end{equation}
Consequently, we have $\mathcal{R}^{\mathrm{rad}}_{\FF_r,p}(B_N) \leq 2^{\frac{\alpha_p}{2}}(N+2)^{\frac{3\alpha_p}{2}}$. Moreover, if $p \neq 2$, then $\mathcal{R}^{\mathrm{rad}}_{\FF_r,p}(B_N) \geq c_pN^{\alpha_p}$ for any integer $N \geq N_p$, with $c_p$ and $N_p$ as in Theorem~\ref{thm:intro-finite}.
\end{prop}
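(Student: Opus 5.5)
The plan is to interpolate, as in Proposition~\ref{prop:finite-support-ratio}, between $p=2$ and $p=\infty$. At $p=2$ the completely bounded norm of $M_a$ equals $\norm{m}_{\ell^\infty_{\N_0}}$. For $p<2$ we pass to $p'$ by trace duality with the conjugate symbol, which is again radial and supported in $\{0,\ldots,N\}$. The two ends are consistent on the group algebra, so it suffices to prove the endpoint estimate
\[
\norm{M_a}_{\cb,\VN(\FF_r) \to \VN(\FF_r)} \leq \sqrt{2}\,(N+2)^{\frac{3}{2}}\norm{m}_{\ell^\infty_{\N_0}}.
\]
Interpolating with exponent $\alpha_p = |1-\frac{2}{p}|$ then yields \eqref{eq:radial-polynomial}.

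For the endpoint I would use the Haagerup--Steenstrup--Szwarc formula \cite[Theorem 1.2]{HSS10}, exactly as in the damped-phase argument before \eqref{eq:HSS-geometric}. The radial Schur multiplier with profile $m$ on the tree of degree $q+1$ has norm controlled by the trace norm of $(1-\frac1q)(\Id-\frac{\mathcal V}{q})^{-1}H$, with $H=[m(j+k)-m(j+k+2)]_{j,k\ge0}$. The constants $c_\pm$ vanish because $m$ is finitely supported. Since $\mathcal V$ is a contraction on $S^1$, the same Neumann series bound as before gives a norm of at most $\norm{H}_{S^1}$. Bo\.{z}ejko--Fendler \cite{BoF84} then transfers this to the completely bounded Fourier norm on $\VN(\FF_r)$, uniformly in $r$.

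Next I would estimate $\norm{H}_{S^1}$. The entries of $H$ vanish unless $j+k\le N$, so $H$ lives in an $(N+1)\times(N+1)$ corner and has rank at most $N+1$. Hence
\[
\norm{H}_{S^1}\le \sqrt{N+1}\,\norm{H}_{S^2}.
\]
Each entry is bounded by $2\norm{m}_\infty$. Summing along antidiagonals $j+k=n$, each of which has $n+1$ entries, gives
\[
\norm{H}_{S^2}^2\le \sum_{n=0}^{N}(n+1)\,|m(n)-m(n+2)|^2 .
\]
This is at most about $2(N+2)^2\norm{m}_\infty^2$ after using $|m(n)-m(n+2)|^2\le 2|m(n)|^2+2|m(n+2)|^2$ and reindexing. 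Altogether this gives $\norm{H}_{S^1}\le \sqrt2\,(N+2)^{3/2}\norm{m}_\infty$.

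Finally I would divide by the denominator in \eqref{eq:local-ratio}. Testing on matrix units gives $\norm{M_a^\HS}_{\cb}\ge\norm{a}_\infty=\norm{m}_\infty$, which yields the upper bound on $\mathcal{R}^{\mathrm{rad}}_{\FF_r,p}(B_N)$. The lower bound holds because the smooth cutoffs $b_{N,\pi/2}$ of Proposition~\ref{prop:smooth-cutoff} are radial and supported in $B_N$, so Theorem~\ref{thm:intro-finite} applies verbatim.

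The main obstacle is matching the exact constant $\sqrt2\,(N+2)^{3/2}$. The rank, the Hilbert--Schmidt count and the precise form of the HSS bound (including the normalisation factor $1-\frac1q$) must combine correctly, and applying the HSS theorem with vanishing $c_\pm$ needs care. If the cheap bound $|m(n)-m(n+2)|\le2\norm{m}_\infty$ loses a factor, I would instead split $H$ into the Hankel matrices of $m$ and of $m(\cdot+2)$ and bound each separately.
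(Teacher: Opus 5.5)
Your proposal is correct and follows the paper's proof essentially step by step. The shared steps are: the HSS formula with $c_\pm=0$ and the Neumann series bound; the rank-times-Hilbert--Schmidt estimate along antidiagonals; Bo\.{z}ejko--Fendler; interpolation with $p=2$ plus trace duality; matrix units for the denominator; and the radial smooth cutoffs for the lower bound. Your worry about the constant is unfounded: the crude bound $|m(n)-m(n+2)|\leq 2\norm{m}_{\ell^\infty_{\N_0}}$ already gives $\norm{H}_{S^2}^2\leq 2(N+1)(N+2)\norm{m}_{\ell^\infty_{\N_0}}^2$, which is exactly what the paper uses, so no splitting of $H$ is needed.
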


\begin{proof}
Put $h(n) \ov{\mathrm{def}}{=} m(n)-m(n+2)$ for $n \geq 0$ and consider the Hankel matrix $H = [h(j+k)]_{j,k \geq 0}$ of \cite[Theorem 1.2 p.~1338]{HSS10}. Since $m$ is supported in $\{0,\ldots,N\}$, we have $h(n) = 0$ for $n > N$ and $|h(n)| \leq 2\norm{m}_{\ell^\infty_{\N_0}}$. Thus $H$ is supported in $\{0,\ldots,N\}^2$ and has rank at most $N+1$. Since the antidiagonal $\{j+k = n\}$ of this square contains $n+1$ entries, we obtain
\[
\norm{H}_{S^2}^2
= \sum_{n = 0}^N (n+1)|h(n)|^2
\leq 2(N+1)(N+2)\norm{m}_{\ell^\infty_{\N_0}}^2.
\]
By the Cauchy--Schwarz inequality for the singular values of $H$, we deduce that
\[
\norm{H}_{S^1}
\leq (N+1)^{\frac{1}{2}}\norm{H}_{S^2}
\leq \sqrt{2}(N+2)^{\frac{3}{2}}\norm{m}_{\ell^\infty_{\N_0}}.
\]
The constants $c_\pm$ of \cite[Theorem 1.2 p.~1338]{HSS10} vanish since $m$ has finite support. For the homogeneous tree of degree $q+1$, the formula of \cite[Theorem 1.2 p.~1338]{HSS10} gives
\[
\norm{M_a^\HS}_{\B(\ell^2_{\FF_r}) \to \B(\ell^2_{\FF_r})}
=
\left(1-\frac{1}{q}\right)
\norm{\left(\Id-\frac{\mathcal V}{q}\right)^{-1}H}_{S^1},
\]
where $\mathcal V(B) = SBS^*$ and $S$ is the forward shift on $\ell^2_{\N_0}$. Since $\mathcal V$ is contractive on $S^1$, we obtain
\[
\begin{aligned}
\left(1-\frac{1}{q}\right)
\norm{\left(\Id-\frac{\mathcal V}{q}\right)^{-1}H}_{S^1}
&\leq
\left(1-\frac{1}{q}\right)
\sum_{k \geq 0}q^{-k}\norm{H}_{S^1}\\
&= \norm{H}_{S^1}.
\end{aligned}
\]
Consequently, the Schur multiplier with symbol $(s,t) \mapsto m(|st^{-1}|)$ has norm at most $\norm{H}_{S^1}$ on $\B(\ell^2_{\FF_r})$, uniformly for $r \geq 2$. By the theorem of Bo\.{z}ejko and Fendler \cite{BoF84}, we deduce that

\[
\norm{M_a}_{\cb,\VN(\FF_r) \to \VN(\FF_r)}
\leq \sqrt{2}(N+2)^{\frac{3}{2}}\norm{m}_{\ell^\infty_{\N_0}}.
\]
At $p = 2$, the completely bounded norm of $M_a$ is $\norm{a}_{\ell^\infty_{\FF_r}} = \norm{m}_{\ell^\infty_{\N_0}}$. Interpolation for $p \geq 2$, followed by trace duality with the conjugate symbol for $p < 2$, gives \eqref{eq:radial-polynomial}. Testing $M_a^\HS$ on matrix units gives
\[
\norm{M_a^\HS}_{\cb,S^p_{\FF_r} \to S^p_{\FF_r}} 
\geq \norm{a}_{\ell^\infty_{\FF_r}},
\]
and the upper estimate for $\mathcal{R}^{\mathrm{rad}}_{\FF_r,p}(B_N)$ follows. The lower estimate follows from Theorem~\ref{thm:intro-finite}, since the symbols $b_N$ appearing there are radial and supported in $B_N$.
\end{proof}

\begin{remark} \normalfont
\label{rem:sharp-cutoff-log}
For the sharp cutoffs of Proposition~\ref{prop:sharp-cutoff}, the same argument gives an upper estimate which matches the lower estimate up to a logarithmic factor. Indeed, let $N \geq 2$ and $m(n) = \e^{\i\vartheta n}1_{\{n \leq N\}}$. With $D = \diag(\e^{\i\vartheta j})_{j \geq 0}$, $T_M = [1_{\{j+k \leq M\}}]_{j,k \geq 0}$ and $J_n = [1_{\{j+k = n\}}]_{j,k \geq 0}$, we have
\[
H 
= (1-\e^{2\i\vartheta})DT_{N-2}D+\e^{\i\vartheta(N-1)}J_{N-1}+\e^{\i\vartheta N}J_N.
\]
Reversing the order of the columns of $T_M$ gives the upper triangular matrix $U_{M+1}$ of size $M+1$ whose entries on and above the diagonal are equal to one. The block decomposition 
\[
U_{2n} = \begin{bmatrix} U_n & \mathbf{1}_n \\ 0 & U_n \end{bmatrix},
\]
where $\mathbf{1}_n$ is the $n \times n$ matrix of ones, gives $\norm{U_{2n}}_{S^1} \leq 2\norm{U_n}_{S^1}+n$. Hence $\norm{U_{2^\ell}}_{S^1} \leq (\ell+2)2^{\ell-1}$. Since $U_n$ is a corner of $U_{2^\ell}$ when $2^{\ell-1} < n \leq 2^\ell$, we obtain $\norm{U_n}_{S^1} \leq n(\log_2 n+3)$. Since $\norm{J_n}_{S^1} = n+1$, it follows that $\norm{H}_{S^1} \leq 2(N+1)(\log_2(N+1)+4)$. As in the proof of Proposition~\ref{prop:radial-polynomial}, we conclude that
\[
\norm{M_{a_{N,\vartheta}}}_{\cb,\L^p(\VN(\FF_r)) \to \L^p(\VN(\FF_r))}
\leq \big(2(N+1)(\log_2(N+1)+4)\big)^{\alpha_p}.
\]
Thus the lower estimate of Proposition~\ref{prop:sharp-cutoff} is optimal up to a logarithmic factor.
\end{remark}

\paragraph{AI statement.}
The author acknowledges the use of AI tools for mathematical discussions, language polishing and feedback during the development and preparation of this manuscript. 

\paragraph{Acknowledgment}
The author thanks Adri\'an Gonz\'ales-P\'erez for sharing \cite{GPPW26} with him.

\paragraph{Competing interests}
The author declares that he has no competing interests.

\paragraph{Data availability}
No external data sets were used. The mathematical arguments are analytic and do not rely on numerical computations.

{\footnotesize
\vspace{0.2cm}
\noindent C\'edric Arhancet\\
\noindent 6 rue Didier Daurat, 81000 Albi, France\\
URL: \href{http://sites.google.com/site/cedricarhancet}{https://sites.google.com/site/cedricarhancet}\\
cedric.arhancet@protonmail.com\\
ORCID: 0000-0002-5179-6972
}


\small
\begin{thebibliography}{GPPW26}

\bibitem[Arh12]{Arh12}
C. Arhancet.
\newblock Unconditionality, Fourier multipliers and Schur multipliers.
\newblock Colloq. Math. 127 (2012), no. 1, 17--37.

\bibitem[Arh26]{Arh26}
C. Arhancet.
\newblock Optimal square size for separating the operator and completely bounded norms of Schur multipliers on $S^4$.
\newblock Preprint, arXiv:2609.29643.

\bibitem[ArK22]{ArK22}
C. Arhancet and C. Kriegler.
\newblock Decomposable Fourier multipliers and an operator-algebraic characterization of amenability.
\newblock Preprint, arXiv:2205.13823, 2022.

\bibitem[ArK23]{ArK23}
C. Arhancet and C. Kriegler.
\newblock Projections, multipliers and decomposable maps on noncommutative $L^p$-spaces.
\newblock M\'em. Soc. Math. Fr. (N.S.), 177, 2023.

\bibitem[BoF84]{BoF84}
M. Bo\.{z}ejko and G. Fendler.
\newblock Herz--Schur multipliers and completely bounded multipliers of the Fourier algebra of a locally compact group.
\newblock Boll. Un. Mat. Ital. A (6) 3 (1984), no. 2, 297--302.

\bibitem[BrO08]{BrO08}
N. P. Brown and N. Ozawa.
\newblock $C^*$-algebras and finite-dimensional approximations.
\newblock Graduate Studies in Mathematics, 88. American Mathematical Society, Providence, RI, 2008.

\bibitem[CaS15]{CaS15}
M. Caspers and M. de la Salle.
\newblock Schur and Fourier multipliers of an amenable group acting on non-commutative $\L^p$-spaces.
\newblock Trans. Amer. Math. Soc. 367 (2015), no. 10, 6997--7013.


\bibitem[Coh82]{Coh82}
J. M. Cohen.
\newblock Operator norms on free groups.
\newblock Boll. Un. Mat. Ital. B (6) 1 (1982), no. 3, 1055--1065.

\bibitem[Die25]{Die25}
R. Diestel.
\newblock Graph Theory.
\newblock Sixth edition. Graduate Texts in Mathematics, 173. Springer, Berlin--Heidelberg, 2025.

\bibitem[FTP83]{FTP83}
A. Fig\`a-Talamanca and M. A. Picardello.
\newblock Harmonic analysis on free groups.
\newblock Lecture Notes in Pure and Applied Mathematics, 87. Marcel Dekker, Inc., New York, 1983.

\bibitem[GPPW26]{GPPW26}
A. M. Gonz\'alez-P\'erez, J. Parcet, J. P\'erez Garc\'ia and S. Wang.
\newblock Radial Schur multipliers on trees and the failure of Fourier--Schur transference.
\newblock Manuscript, 2026, 28 pp.

\bibitem[HSS10]{HSS10}
U. Haagerup, T. Steenstrup and R. Szwarc.
\newblock Schur multipliers and spherical functions on homogeneous trees.
\newblock Internat. J. Math. 21 (2010), no. 10, 1337--1382.

\bibitem[Haa79]{Haa79}
U. Haagerup.
\newblock An example of a nonnuclear $C^*$-algebra, which has the metric approximation property.
\newblock Invent. Math. 50 (1978/79), no. 3, 279--293.


\bibitem[HoV00]{HoV00}
B. Hollenbeck and I. E. Verbitsky.
\newblock Best constants for the Riesz projection.
\newblock J. Funct. Anal. 175 (2000), no. 2, 370--392.

\bibitem[HST26]{HST26}
J. Huang, F. Sukochev and A. Tomskova.
\newblock Schur multipliers with unequal operator and completely bounded norms on $\S^p$, $1 < p \neq 2 < \infty$.
\newblock Preprint, \href{https://arxiv.org/abs/2608.20933}{arXiv:2608.20933}.

\bibitem[Kes59]{Kes59}
H. Kesten.
\newblock Symmetric random walks on groups.
\newblock Trans. Amer. Math. Soc. 92 (1959), no. 2, 336--354.

\bibitem[Lar71]{Lar71}
R. Larsen.
\newblock An introduction to the theory of multipliers.
\newblock Springer-Verlag, 1971.


\bibitem[NeR11]{NeR11}
S. Neuwirth and \'E. Ricard.
\newblock Transfer of Fourier multipliers into Schur multipliers and sumsets in a discrete group.
\newblock Canad. J. Math. 63 (2011), no. 5, 1161--1187.

\bibitem[PRS22]{PRS22}
J. Parcet, \'E. Ricard and M. de la Salle.
\newblock Fourier multipliers in $\mathrm{SL}_n(\R)$.
\newblock Duke Math. J. 171 (2022), no. 6, 1235--1297.

\bibitem[Pie84]{Pie84}
J.-P. Pier.
\newblock Amenable locally compact groups.
\newblock Pure and Applied Mathematics (New York). A Wiley-Interscience Publication. John Wiley \& Sons, Inc., New York, 1984.


\bibitem[Pis98]{Pis98}
G. Pisier.
\newblock Non-commutative vector valued $\L^p$-spaces and completely $p$-summing maps.
\newblock Ast\'erisque 247, 1998.

\bibitem[Pyt81]{Pyt81}
T. Pytlik.
\newblock Radial functions on free groups and a decomposition of the regular representation into irreducible components.
\newblock J. Reine Angew. Math. 326 (1981), 124--135.



\bibitem[Rud90]{Rud90}
W. Rudin.
\newblock Fourier analysis on groups.
\newblock Wiley Classics Library, John Wiley \& Sons, Inc., New York, 1990.

\bibitem[Ser80]{Ser80}
J.-P. Serre.
\newblock Trees.
\newblock Translated from the French by John Stillwell.
Springer-Verlag, Berlin--New York, 1980.

\bibitem[Spr04]{Spr04}
N. Spronk.
\newblock Measurable Schur multipliers and completely bounded multipliers of the Fourier algebras.
\newblock Proc. London Math. Soc. (3) 89 (2004), no. 1, 161--192.

\end{thebibliography}
\end{document}